\documentclass[11pt]{amsart}

\usepackage[margin=1in]{geometry}
\usepackage{amsmath,amssymb,amsthm,mathtools,mathrsfs}
\usepackage{enumitem}
\usepackage{float}
\usepackage{bm}
\usepackage{tikz-cd}
\usepackage[hidelinks]{hyperref}
\usepackage{aliascnt}
\usepackage[nameinlink,capitalize]{cleveref}

\newtheorem{theorem}{Theorem}[section]
\newaliascnt{proposition}{theorem}
\newtheorem{proposition}[proposition]{Proposition}
\aliascntresetthe{proposition}
\newaliascnt{lemma}{theorem}
\newtheorem{lemma}[lemma]{Lemma}
\aliascntresetthe{lemma}
\newaliascnt{corollary}{theorem}
\newtheorem{corollary}[corollary]{Corollary}
\aliascntresetthe{corollary}
\newaliascnt{conjecture}{theorem}

\aliascntresetthe{conjecture}
\theoremstyle{definition}
\newaliascnt{definition}{theorem}
\newtheorem{definition}[definition]{Definition}
\aliascntresetthe{definition}
\newaliascnt{example}{theorem}
\newtheorem{example}[example]{Example}
\aliascntresetthe{example}
\theoremstyle{remark}
\newaliascnt{remark}{theorem}
\newtheorem{remark}[remark]{Remark}
\aliascntresetthe{remark}

\newcommand{\TT}{\mathcal T}

\newcommand{\CC}{\mathbb C}
\newcommand{\ZZ}{\mathbb Z}
\newcommand{\PP}{\mathbb P}
\newcommand{\EE}{\mathbb E}
\newcommand{\one}{\mathbf 1}
\newcommand{\Pf}{\operatorname{Pf}}
\newcommand{\csm}{\operatorname{csm}}
\newcommand{\ssm}{\operatorname{ssm}}
\newcommand{\Sym}{\operatorname{Sym}}
\newcommand{\ASym}{\operatorname{ASym}}
\newcommand{\Rstar}{R^{*}}
\newcommand{\core}{\operatorname{core}_2}
\newcommand{\BG}{\operatorname{BG}}

\newcommand{\st}{\tilde{s}}

\DeclareMathOperator{\Hom}{Hom}
\DeclareMathOperator{\GL}{GL}
\def\AI#1{{\color{red} AI}}

\title[Probability and degeneracy locus formulas]{Probability-theoretic interpretation of \\ degeneracy locus formulas}

\author{Rich\'ard Rim\'anyi}
\address{Department of Mathematics, UNC Chapel Hill}

\begin{document}

\begin{abstract}
We give explicit formulas for the stable Segre–Schwartz–MacPherson classes of degeneracy loci of symmetric and skew-symmetric maps. Together with the known formulas for ordinary linear maps, this completes the stable SSM theory for the three classical types. We interpret these formulas probabilistically, in terms of the endpoint random partition of a stochastic rational six-vertex model: the skew classes are probabilities of Maya-dimer events, and the symmetric classes are Chebyshev moments of the random 2-core.
\end{abstract}

\maketitle

\section{Introduction and main results}\label{sec:intro}

The fundamental cohomology class of a subvariety $\Sigma$ in a smooth ambient space encodes geometric and enumerative information on the subvariety. In practice one often computes such fundamental classes as specializations of so-called {\em universal} degeneracy loci formulas, such as the Giambelli-Thom-Porteous formula or the J\'ozefiak-Lascoux-Pragacz and Harris-Tu formulas. The building blocks of such universal formulas are the Schur functions $s_\lambda$. Schur functions themselves can be defined as the equivariant fundamental classes of matrix Schubert varieties.

Much more geometric information about the subvariety $\Sigma$ is encoded by an
inhomogeneous refinement of its fundamental class: its
Chern--Schwartz--MacPherson class $\csm(\Sigma)$, or the closely related
Segre--Schwartz--MacPherson (SSM) class $\ssm(\Sigma)$.  Such classes are often calculated by
specializations of universal SSM formulas.  Both analogy with the preceding
paragraph and experience suggest that the building blocks of universal SSM
formulas are the stable SSM classes of matrix Schubert cells.  They were
introduced and studied in \cite{FR} and are denoted by $\st_\lambda$.  They are
inhomogeneous deformations of Schur functions,
\[
  \st_\lambda=s_\lambda+\text{terms of higher degree},
\]
and, in the completed symmetric-function ring, satisfy the remarkable identity
\begin{equation}\label{eq:sumone-intro}
  \sum_\lambda \st_\lambda=1.
\end{equation}
In \cite[Remark~8.8]{FR}, some positivity properties are discussed and, consequently, identity \eqref{eq:sumone-intro} is interpreted formally as a probability
distribution on partitions.

The purpose of this paper is to develop that probability-theory viewpoint. The development has one common probabilistic input (after a positive finite-alphabet specialization the functions $\st_\lambda$ become an honest probability distribution on partitions, namely the endpoint distribution of a stochastic rational six-vertex model) and three geometric applications, one for each classical type. We now state them.

\subsection*{The six-vertex random partition}
Fix $m\ge1$ and $x_1,\ldots,x_m\ge0$, and put
\[
 p_i=\frac{x_i}{1+x_i}\in[0,1).
\]
Consider the corresponding row-inhomogeneous stochastic rational six-vertex model with one path
entering from the left in every row and no paths entering from below.  Throughout
the paper we identify $m$-element sets $I=\{i_1<\cdots<i_m\}$ of positive
integers with partitions $\lambda$ of length at most $m$ by
\begin{equation}\label{eq:I-lambda-intro}
 i_a=\lambda_{m+1-a}+a,
 \qquad 1\le a\le m,
\end{equation}
and we write $\Lambda$ for the random partition attached in this way to the
random set of top exit positions of the model.
In the whole paper $\PP$ stands for probability and $\EE$ for expected value.

\begin{theorem}[Six-vertex realization]\label{thm:intro-prob}
For every partition $\lambda$ with $\ell(\lambda)\le m$,
\[
 \PP(\Lambda=\lambda)
 =\st_\lambda(x_1,\ldots,x_m).
\]
\end{theorem}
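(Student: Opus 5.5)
The plan is to compare two row-by-row transfer structures: one on the geometric side, the other in the stochastic six-vertex model. From \cite{FR}, the stable SSM classes $\st_\lambda(x_1,\ldots,x_m)$ admit a lattice-model (or transfer-matrix) description. Concretely, they satisfy a branching rule in the alphabet: adding one variable gives
\[
\st_\lambda(x_1,\ldots,x_m)=\sum_{\mu}\st_\mu(x_1,\ldots,x_{m-1})\,W_{x_m}(\mu\to\lambda),
\]
where $W_x$ is a one-row weight. This comes from the one-row partition function of a rational six-vertex model whose $R$-matrix weights are rational functions of $x$. I will take this branching rule as the starting point, either citing \cite{FR} or rederiving it from the localization/interpolation characterization of $\st_\lambda$.

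The core step is to identify $W_{x}(\mu\to\lambda)$ with the transition probability of a single row of the stochastic model with $p=x/(1+x)$. That row has one path entering from the left and paths entering from below at the positions $I(\mu)$; these are shifted by the identification \eqref{eq:I-lambda-intro}, so that $m-1$ paths become $m$. The single-row transition is explicit. Each incoming path either continues up or turns right, and the path carried to the right continues or exits upward. The stochastic weights are:
\begin{itemize}
\item a vertical path goes straight with probability $1-p=1/(1+x)$;
\item a horizontal path turns up with probability $p$;
\item at a vertex with two incoming paths, the model exchanges deterministically.
\end{itemize}
The resulting transition probability is a product of factors of the form $p(1-p)^{k}$ over the horizontal segments. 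These are interlacing-type products, and I will match them term by term with the weights $x^{a}(1+x)^{-b}$ in the SSM branching rule. The main care point is the index shift in \eqref{eq:I-lambda-intro}: the new entering path creates the extra part, and the change of $m$ must be tracked consistently.

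Once the one-row identification holds, induction on $m$ proves the theorem, because both sides obey the same recursion. The base case is $m=0$, where the empty partition has probability $1$ and $\st_\emptyset$ of the empty alphabet equals $1$. The probabilities multiply along rows because the rows are independent Markov steps: the process is a Markov chain in the row index, with state the set of occupied columns. As a consistency check, summing over $\lambda$ gives $1$ on the probability side, which recovers \eqref{eq:sumone-intro}.

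The hardest step will be the exact branching rule for $\st_\lambda$. If \cite{FR} only gives $\st_\lambda$ as a sum over lattice configurations for an unnormalized rational vertex model, I will apply a gauge transformation to reach the stochastic one. This means rescaling the weights by $(1+x_i)^{-1}$ per vertical-edge crossing, so that each vertex's outgoing weights sum to $1$. I then need to check that this rescaling changes the total partition function for a fixed boundary only by a factor that is independent of the configuration and equal to $1$. Verifying this gauge invariance, and checking the four nontrivial vertex weights against the $R$-matrix of \cite{FR}, is where I expect most of the work to lie.
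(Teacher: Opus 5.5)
Your proposal has a genuine gap: the one step that carries all the content is assumed rather than proved.

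\textbf{The branching rule is the theorem.} You reduce everything to the rule $\st_\lambda(x_1,\ldots,x_m)=\sum_\mu\st_\mu(x_1,\ldots,x_{m-1})W_{x_m}(\mu\to\lambda)$, with $W_x$ the one-row stochastic transition kernel. Because the row process is Markov, this rule plus the trivial base case is \emph{equivalent} to the theorem. You give no argument for it beyond ``cite \cite{FR} or rederive it from localization.''

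What the paper takes from \cite{FR} is the closed weight-function formula \eqref{eq:tilde-from-P}--\eqref{eq:P-I}:
\[
 \st_\lambda(x)=\Sym\Bigl[\prod_u x_u^{i_u-1}(1+x_u)^{-i_u}\prod_{u<v}\frac{1+x_v-x_u}{x_v-x_u}\Bigr].
\]
This is a symmetrization, not a row-transfer rule. The missing idea is to recognize $\prod_u(1+x_u)^n$ times this expression as the partition function $Z_I$ of the \emph{unnormalized} rational six-vertex model on the $m\times n$ rectangle. That is the coordinate Bethe ansatz, with one-particle factor $x^{i-1}(1+x)^{n-i}$ and scattering factor $\frac{1+x_v-x_u}{x_v-x_u}$; the paper does this in \cref{prop:bethe}. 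If you keep the row-by-row route, deriving your branching rule from the symmetrized formula is exactly that Bethe-ansatz computation: a row transfer matrix acting on the symmetrized wavefunction, where the scattering factor must be handled. So it cannot be skipped.

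Once $Z_I$ is identified, dividing every row-$u$ vertex weight by $1+x_u$ multiplies every configuration on the rectangle by the same constant $\prod_u(1+x_u)^{-n}$. This constant is not $1$, as you expect; it is precisely the denominator in \eqref{eq:tilde-from-P}. You then finish by noting two things. For $I\subset[1,n]$ the strip probability is already computed in the rectangle. And each new path turns up after finitely many steps almost surely, so $\Lambda$ is well defined.

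\textbf{The local weights are reversed.} As a result, your ``term by term'' matching would fail already for $m=1$. In this model the unnormalized weights are $1+x$ when the incoming occupancies agree, $x$ when they differ and stay in place, and $1$ when they interchange. So a path, horizontal or vertical, goes straight with probability $p$ and turns with probability $1-p$; you have the opposite for both. With your weights a single path gives $\PP(\Lambda=(r))=(1-p)^r p$, whereas $\st_{(r)}(x)=x^r/(1+x)^{r+1}=(1-p)p^r$. In general your model computes $\st_\lambda$ with every $p_u$ replaced by $1-p_u$.

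Relatedly, the one-row weight is not a product of factors $p(1-p)^k$ over horizontal segments. It is $p^{a}(1-p)^{b}$, where $a$ counts mixed vertices whose occupancies stay in place, including vertical paths passing straight through empty stretches of the row. And $b$ counts mixed vertices whose occupancies interchange, including vertical paths turning right.
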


The model itself is standard, see, e.g. \cite{borodin2017integrable, KZJ}; the point of \cref{thm:intro-prob} is the
identification of its endpoint law with the stable SSM functions occurring in
\eqref{eq:sumone-intro}.  This is the basic probability dictionary underlying all
three applications.

\subsection*{Ordinary matrices: corank as a boundary event}
Let $k\le n$, and let
\[
 \Sigma^r_{k,n}
 =\{\varphi\in\Hom(\CC^k,\CC^n):\dim\ker\varphi=r\}
\]
be the exact-corank-$r$ stratum.  Theorem~9.1 of \cite{FR}, after the same
positive specialization as above, has the following probability interpretation.
Take the six-vertex model with $m=k$, and write its exit positions as
$i_1<\cdots<i_k$.  Then
\[
 \left.\ssm(\Sigma^r_{k,n})\right|_{\beta=0,\,\alpha_i=-x_i}
 =\PP\bigl(\#\{1\le a\le k:i_a>n\}=r\bigr),
\]
where $\alpha$ and $\beta$ are Chern roots of the factors of the symmetry group $\GL_k\times \GL_n$.
The random variable on the right is the number of occupied horizontal edges on
the right boundary of the $k\times n$ truncation of the six-vertex model.
Consequently, the closed locus of matrices of corank at least $r$ gives the
corresponding tail probability.  We prove these statements in
Proposition~\ref{prop:ordinary-corank-prob}.

\subsection*{Skew-symmetric matrices: corank as an event}
Let
$
 B_\lambda=\{\lambda_i-i:i\ge1\}\subset\ZZ
$
be the Maya set of $\lambda$, and let $w_j(\lambda)=\one_{j\in B_\lambda}$.  There
are two {\em dimerizations} of $\ZZ$, obtained by partitioning it into adjacent pairs;
set
\[
 d_0(\lambda)=\sum_{k\in\ZZ}|w_{2k}-w_{2k+1}| \ \in 2\ZZ_{\ge0},
 \qquad \qquad
 d_1(\lambda)=\sum_{k\in\ZZ}|w_{2k-1}-w_{2k}|\ \in 2\ZZ_{\ge0}+1.
\]
Thus $d_\epsilon$ counts dimers occupied at exactly one endpoint.  

Let $\ssm(\Sigma^\wedge_{\infty,r})$ denote the stable equivariant SSM class of the
skew-symmetric corank-$r$ orbits, the stabilization being taken along dimensions
of the same parity as $r$, see Section~\ref{sec:skew} for details.

\begin{theorem}[Skew Maya-dimer formula]\label{thm:intro-skew}
If $r\equiv\epsilon\pmod2$, then
\begin{equation}\label{eq:skew-main-intro}
 \ssm(\Sigma^\wedge_{\infty,r})
 =\sum_{d_\epsilon(\lambda)=r}\st_\lambda.
\end{equation}
In particular, every stable $\st$-coefficient is $0$ or $1$.
\end{theorem}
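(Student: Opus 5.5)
Both sides are elements of the completed ring of symmetric functions in the Chern roots $\alpha$ of the stabilized $\GL$ factor. The $\st_\lambda$ are linearly independent, since each is $s_\lambda$ plus terms of higher degree. So it is enough to prove \eqref{eq:skew-main-intro} after specializing to finitely many variables $x_1,\ldots,x_m$, with $m$ arbitrary and of fixed parity, and checking that the specializations of the right-hand side are compatible as $m$ grows. My plan is to compute $\ssm(\Sigma^\wedge_{m,r})$ for skew forms on $\CC^m$ by localization / interpolation, in the spirit of \cite{FR}, and to match it with the six-vertex side using \cref{thm:intro-prob}. Under the positive specialization $\alpha_i=-x_i$, the claim becomes
\[
 \ssm(\Sigma^\wedge_{m,r})\big|_{\alpha_i=-x_i}=\PP\bigl(d_\epsilon(\Lambda)=r\bigr).
\]
This is exactly parallel to the ordinary-matrix statement: corank becomes an event for the random endpoint partition $\Lambda$.

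\textbf{Step 1: fixed-point data.} I would characterize $\ssm(\Sigma^\wedge_{m,r})$ by interpolation conditions. The $T$-fixed points of $\ASym^2$ do not suffice, so I would use the standard axiomatic characterization of SSM classes of orbits: restriction to the normal slice at each orbit representative (a $\GL$-stabilizer computation). The restriction to orbit $\Sigma_s$ should be $\delta_{rs}$ times the Euler class of the tangent space over the total Chern class of the normal bundle. Concretely, the ASym-corank-$s$ orbit has stabilizer $\mathrm{Sp}_{m-s}\times \GL_s$. Restricting to its maximal torus identifies the variables as $\pm y_1,\ldots,\pm y_{(m-s)/2}$ together with $z_1,\ldots,z_s$.

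\textbf{Step 2: the $\st$-side under the same restriction.} Next I would compute how $\sum_{d_\epsilon(\lambda)=r}\st_\lambda$ restricts when the alphabet contains pairs $\{y,-y\}$. The key probabilistic fact I would aim for is that in the six-vertex model, rows with parameters $x$ and $x'=-x/(1+2x)$ (i.e.\ $\alpha$ and $-\alpha$, which makes the pair $p,p'$ of weights sum appropriately, $p'=-p/(1-p)\cdot$...) act together as a transfer operator. This operator moves the Maya diagram only by swaps inside dimers of the $\epsilon$-dimerization, so it preserves $d_\epsilon$. I expect this to follow from the Yang--Baxter equation and the fusion of two rows. Then the restriction of the generating function $\sum_\lambda t^{d_\epsilon(\lambda)}\st_\lambda$ factors: the $y$-pairs contribute nothing to $d_\epsilon$, and the $z$-variables contribute as in an $s$-row model starting from the vacuum. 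There $d_\epsilon$ should equal $s$ plus an independent correction whose vanishing encodes the required support and normalization conditions.

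\textbf{Step 3: uniqueness and stability.} The interpolation conditions determine $\ssm$ uniquely. The matching of normalizations, namely the value at the orbit itself, is the $\GL_s$ computation in which all $s$ paths exit, as in Proposition~\ref{prop:ordinary-corank-prob}. Compatibility with stabilization in steps of $2$ then comes from adding a pair $\{y,-y\}$, which by Step 2 does not change the law of $d_\epsilon$. The $0/1$ statement follows immediately, because the sets $\{d_\epsilon=r\}$ are disjoint.

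\textbf{Main obstacle.} I expect the hardest part to be Step 2: proving that paired rows with opposite parameters act by dimer-preserving moves in the correct dimerization $\epsilon$, with parity governed by $m$. If the fusion argument is unwieldy, my fallback is to use the Jacobi--Trudi-type determinant for $\st_\lambda$ from \cite{FR}. I would convert $\sum_{d_\epsilon=r}\st_\lambda$ into a Pfaffian, via the standard correspondence between Maya dimers and Pfaffians, and compare it with a known Pfaffian formula for skew corank loci.
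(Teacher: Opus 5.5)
There is a genuine gap, and it is in the step you yourself flag: the fused-row mechanism of Step~2 is false as stated.

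Take $m=2$ and the symplectic pair $u_2=-u_1=-u$. (Incidentally, $u\mapsto-u$ corresponds to $p'=-p/(1-2p)$. Your $x'=-x/(1+2x)$ gives $p'=-p$ instead. In either case the paired row weights leave $[0,1]$, so probability gives you nothing at these points.) Here the two paired rows act on the empty configuration. Yet the two-variable formula of Section~3 gives $\st_{(1)}(u,-u)=2u^4/(1-u^2)^3\neq0$, and $\mathcal B_2((1))=\{0,2\}$ has $d_0=2$. So the paired rows create mixed dimers; they do not preserve $d_\epsilon$ configuration by configuration.

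What is true is only a cancellation in the generating function. One has $\sum_k\st_{(2k,2k)}(u,-u)=1$, so the $d_0=2$ block sums to zero. More generally, $\Psi_m(t;u_1,\ldots,u_{m-2},a,-a)=\Psi_{m-2}(t;u_1,\ldots,u_{m-2})$. In the paper this is a \emph{consequence} of \cref{thm:skew-littlewood}: the pole of $M_{m-1,m}$ at $u_{m-1}+u_m=0$ meets the zero of the prefactor. So you cannot use it as the lemma from which the theorem is derived.

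Likewise, the $z$-rows do not force $d_\epsilon=s$ up to a vanishing correction. The restriction to the corank-$s$ orbit is the slice class $\ssm(\Sigma^\wedge_{s,r})(z)$. This is nonzero for every $r\le s$ of the right parity, e.g.\ $1/(1+z_1+z_2)$ for $s=2$, $r=0$.

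Step~1 also misstates the characterization. The restrictions cannot be $\delta_{rs}$ times a normalization. Restriction to the zero orbit, whose stabilizer is all of $\GL_m$, is an isomorphism, and $\ssm(\Sigma^\wedge_{2,0})=1/(1+u_1+u_2)\neq0$ there. The actual Feh\'er--Rim\'anyi-type axioms concern the CSM numerators: own-orbit normalization, divisibility by $c(T_{x_s}\Sigma_s)$ at the other orbits, and a degree bound. Uniqueness holds only among classes satisfying all three.

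To use these axioms you would first have to show that $c(\Lambda^2\CC^m)\sum_{d_\epsilon(\lambda)=r}\st_\lambda(u)$, an infinite sum, is a polynomial of bounded degree. Your plan never addresses this, and it is the real analytic content. Summing over the smallest beta number produces apparent poles at $P^2=1$, where $P=y_1\cdots y_m$. The paper handles this in four steps:
\begin{itemize}
\item it sets up a coupled two-state recurrence, carrying the auxiliary statistic $e_1$ because $y_n=0$ swaps the dimerizations;
\item it cancels the $1-P^2$ poles with a residue row identity;
\item it proves degree bounds and applies Izergin--Korepin interpolation at $y_n\in\{0,1\}\cup\{y_i^{-1}\}$ to establish the Pfaffian for $\Psi_m$;
\item it expands that Pfaffian in $t$ via Schur's identity to recover the $W^\wedge_{m,r}$ of \cite{PR}, whose identification with CSM classes is \emph{cited}, not reproved.
\end{itemize}

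Your fallback points in this direction. However, ``the standard correspondence between Maya dimers and Pfaffians'' is not an available tool; the dimer-refined Littlewood identity is precisely what has to be proved. Similarly, your own-orbit normalization $\sum_{\ell(\lambda)\le s,\,d_\epsilon(\lambda)=s}\st_\lambda(z)=\prod_{i<j}(z_i+z_j)/(1+z_i+z_j)$ is a nontrivial identity of the same kind. It does not follow from \cref{prop:ordinary-corank-prob}.
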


The formula proves the skew positivity and transpose-invariance
assertions of \cite[Conj.~6.2]{PR}, and strengthens the former to an explicit
$0/1$ rule.  Its probability interpretation is the following.

\begin{corollary}[Skew corank probability]\label{cor:skew-prob}
Under every positive finite-alphabet specialization,
\[
 \ssm(\Sigma^\wedge_{\infty,r})(x_1,\ldots,x_m)
 =\PP\bigl(d_\epsilon(\Lambda)=r\bigr).
\]
\end{corollary}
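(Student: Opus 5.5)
The corollary follows by combining \cref{thm:intro-skew} with \cref{thm:intro-prob}. First specialize the identity \eqref{eq:skew-main-intro} to the finite alphabet $x_1,\ldots,x_m\ge0$. Under this specialization $\st_\lambda(x_1,\ldots,x_m)=0$ whenever $\ell(\lambda)>m$. This is the standard stability property of the $\st_\lambda$ under setting variables to zero, which underlies \cref{thm:intro-prob}: the six-vertex probabilities of partitions of length at most $m$ already sum to $1$ by \eqref{eq:sumone-intro}. The sum therefore reduces to
\[
 \ssm(\Sigma^\wedge_{\infty,r})(x_1,\ldots,x_m)=\sum_{\ell(\lambda)\le m,\ d_\epsilon(\lambda)=r}\st_\lambda(x_1,\ldots,x_m).
\]
By \cref{thm:intro-prob}, each summand equals $\PP(\Lambda=\lambda)$. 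Since the events $\{\Lambda=\lambda\}$ are disjoint, the right-hand side is $\PP(d_\epsilon(\Lambda)=r)$.

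The step that needs care is justifying that specialization commutes with the infinite sum in \eqref{eq:skew-main-intro}. The identity holds in the completed ring of symmetric functions, where it means that each graded piece is a finite sum. I would argue as follows:
\begin{itemize}[leftmargin=*]
\item Specialization to $m$ variables is a continuous ring map from the completed ring to formal power series $\ZZ[[x_1,\ldots,x_m]]$. Hence the identity holds coefficientwise as formal power series.
\item To pass to actual numerical values, we need convergence. Each $\st_\lambda(x)$ for $x_i\ge0$ is a nonnegative number given by the convergent six-vertex probability, and it is the value of a convergent power series in $x$ near $0$ (the SSM classes are rational in the $x_i$, with denominators powers of $1+x_i$).
\item The subfamily sum is dominated by the full sum, which equals $1$. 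So for small $x$ the coefficientwise identity upgrades to a numerical identity.
\end{itemize}

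To reach all $x_i\ge0$, I would use that $\ssm(\Sigma^\wedge_{\infty,r})$ specialized to finitely many variables is a rational function regular on $x_i\ge0$. On the other side, $\PP(d_\epsilon(\Lambda)=r)$ is a series of nonnegative terms, each rational in the $p_i$. Write $P(x)=\PP(d_\epsilon(\Lambda)=r)$ and $Q(x)=\PP(d_\epsilon(\Lambda)\ne r)$. Then $P+Q=1$, and likewise the two corresponding $\ssm$ expressions sum to $1$, since the orbits of both parities together exhaust the space (or one simply applies \eqref{eq:sumone-intro}). Both $P$ and $Q$ are lower semicontinuous as sums of nonnegative continuous functions, and they sum to the continuous function $1$. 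Hence both are continuous in $p\in[0,1)^m$. Moreover, $P$ and $Q$ are analytic, because the tails of the six-vertex path positions decay geometrically, uniformly on compacts, so the series converge uniformly there.

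Analytic continuation from a neighborhood of $0$ then gives equality on the whole region $x_i\ge0$. The main obstacle is exactly this analytic and uniform-convergence bookkeeping. If the paper's definition of ``positive specialization'' already treats $\st_\lambda(x)$ as the six-vertex probability for all $x\ge0$ with absolute convergence, the step is immediate.
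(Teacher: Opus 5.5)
Your main argument is the paper's own. In \cref{sec:prob-consequences} the corollary is read off by specializing \cref{thm:intro-skew} to $x_1,\ldots,x_m$, replacing each $\st_\lambda(x)$ by $\PP(\Lambda=\lambda)$ via \cref{thm:intro-prob}, and summing over the disjoint events $\{\Lambda=\lambda\}$ with $d_\epsilon(\lambda)=r$. The paper treats the passage from the identity in $\hat\Lambda_m$ to actual numerical values as immediate. You are right that this passage deserves a word, but two inferences in your analytic supplement do not hold as written.

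\textbf{The two faulty inferences.} First, the bound $\sum_{d_\epsilon(\lambda)=r}\st_\lambda(x)\le1$ only shows that the numerical series converges. It does not justify exchanging the sum over $\lambda$ with the Taylor expansion of each $\st_\lambda$. That exchange is what is needed to identify the numerical sum with the rational function $\rho_m(\Theta^\wedge_r)$, and it requires absolute convergence of the double series over $\lambda$ and monomials. Second, uniform convergence of real-analytic functions on real compacts does not give analyticity; your semicontinuity argument with $P+Q=1$ yields only continuity. Moreover, the probabilistic tail bounds you invoke are available only for real $p_i\in[0,1)$, where all weights are nonnegative.

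\textbf{A repair.} Both issues are settled at once in the variables $y_i=u_i/(1+u_i)$ of \eqref{eq:robbins}--\eqref{eq:robbins-tilde}.
\begin{enumerate}
\item Expand the finite product $\prod_{i<j}(1-2y_i+y_iy_j)=\sum_\gamma c_\gamma y^\gamma$. This writes $\Rstar_B(y)$ as a signed combination $\sum_\gamma \pm c_\gamma\, s_{\mu(\gamma,B)}(y)$, where terms with repeated exponents vanish.
\item For fixed $\gamma$, the map $B\mapsto\mu(\gamma,B)$ is at most $m!$-to-one. Hence $\sum_B|\Rstar_B(y)|\le C_m\sum_\mu s_\mu(|y_1|,\ldots,|y_m|)=C_m\prod_i(1-|y_i|)^{-1}\prod_{i<j}(1-|y_iy_j|)^{-1}$ for a constant $C_m$ depending only on $m$.
\item Therefore every subseries of $\sum_\lambda\st_\lambda$ converges absolutely and locally uniformly on the open complex polydisc $\{|y_i|<1\}$. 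It defines a holomorphic function there whose expansion at $0$ is the formal sum.
\item On the other side, $\rho_m(\Theta^\wedge_r)$ is, by \cref{prop:skew-stab}, a polynomial in $u$ divided by products of factors $1+u_i+u_j$ and $1+u_i$. In the $y$-variables this is a rational function with denominators built from $1-y_i$ and $1-y_iy_j$, so it is also holomorphic on the polydisc.
\item The identity theorem then gives equality at every $y=(p_1,\ldots,p_m)\in[0,1)^m$, that is, at every $x\in[0,\infty)^m$.
\end{enumerate}

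\textbf{Two smaller corrections.} The vanishing of $\st_\lambda(x_1,\ldots,x_m)$ for $\ell(\lambda)>m$ comes from the Schur support $\mu\supseteq\lambda$, as in \eqref{eq:rho-st}, not from the sum-to-one identity. The normalization you need is $\sum_{r\equiv\epsilon}\ssm(\Sigma^\wedge_{\infty,r})=\sum_\lambda\st_\lambda=1$ for each fixed $\epsilon$ separately, which holds because $d_\epsilon$ is defined on every partition. It is not a statement about orbits of both parities, which never coexist in a single $\Lambda^2\CC^m$.
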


\subsection*{Symmetric matrices: Chebyshev moments of the 2-core}
The $2$-core of a partition is the partition that remains after one iteratively
removes rim dominoes.  Every $2$-core is a staircase partition $\delta_k$.  We write
\begin{equation}\label{eq:kappa-def}
 \core(\lambda)=\delta_{\kappa(\lambda)},
 \qquad
 \delta_k=(k,k-1,\ldots,1).
\end{equation}
Let $V_k(t)$ be the Chebyshev polynomial of the third kind, normalized by
\begin{equation}\label{eq:cheb-rec-intro}
 V_0(t)=1,\qquad V_1(t)=2t-1,\qquad
 V_k(t)=2tV_{k-1}(t)-V_{k-2}(t).
\end{equation}
For example we have
$
 V_2=4t^2-2t-1,
 V_3=8t^3-4t^2-4t+1
$.
Let $\ssm(\Sigma^S_{\infty,r})$ denote the stable equivariant SSM class of the symmetric corank-$r$ orbits, see Section~\ref{sec:symmetric-statement} for details.

\begin{theorem}[$2$-core/Chebyshev formula]\label{thm:intro-sym}
For the stable symmetric corank orbits,
\begin{equation*} \sum_{r\ge0}t^r\ssm(\Sigma^S_{\infty,r})
 =\sum_\lambda V_{\kappa(\lambda)}(t)\st_\lambda .
\end{equation*}
Equivalently,
\[
 [\st_\lambda]\ssm(\Sigma^S_{\infty,r})
 =[t^r]V_{\kappa(\lambda)}(t).
\]
Hence the coefficient of $\st_\lambda$ depends only on the $2$-core of
$\lambda$.
\end{theorem}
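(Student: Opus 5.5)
We prove \cref{thm:intro-sym} by checking the identity after a family of finite-alphabet specializations rich enough to separate the $\st_\lambda$, and then computing both sides there. The $\st_\lambda$ are inhomogeneous deformations of the Schur basis, so they form a topological basis of the completed ring of symmetric functions. Hence it suffices to verify
\[
 \sum_{r\ge0}t^r\ssm(\Sigma^S_{\infty,r})(x_1,\ldots,x_m)
 =\sum_\lambda V_{\kappa(\lambda)}(t)\,\st_\lambda(x_1,\ldots,x_m)
\]
for every $m$, as an identity of power series in $x_1,\ldots,x_m$. By \cref{thm:intro-prob}, for $x_i\ge0$ the right-hand side equals $\EE\bigl[V_{\kappa(\Lambda)}(t)\bigr]$. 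The whole task is therefore to compute the left side by a localization/interpolation argument and to recognize the answer as this Chebyshev expectation.

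\textbf{Step 1: a closed formula for the left side.} On symmetric $N\times N$ matrices we use the torus of $\GL_N$ acting by $\varphi\mapsto g\varphi g^T$, with Chern roots $\alpha_1,\ldots,\alpha_N$. The corank-$r$ orbits are classified, and each has a homogeneous-space normal structure. The stabilizer of a point of $\Sigma^S_{N,r}$ is $O(N-r)\ltimes(\cdots)\times \GL_r$. The SSM class of each orbit is then determined by the standard interpolation (axiomatic) characterization: restriction to each orbit's fixed point is given by the normal Euler-type factor, and one has homogeneity in the SSM sense. I would instead use the resolution approach: the generating function $\sum_r t^r\ssm(\Sigma^S_{N,r})$ is assembled from SSM classes of closures, written via the Grassmannian of kernels, $\sum_r \ssm(\overline{\Sigma^S_{r}})\cdot(\text{coefficients})$. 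Concretely, a symmetric map with kernel $K$ is a nondegenerate symmetric form on $\CC^N/K$. Summing over the kernel dimension gives a Weyl-symmetrization formula, and this formula expresses $\sum_r t^r\ssm$ as a Jacobi–Trudi/Pfaffian-like ratio in the variables $x_i=-\alpha_i$. The weight attached to the pairings $x_i+x_j$ is what produces the domino (2-core) structure.

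\textbf{Step 2: a transfer matrix with the same generating function.} We interpret the resulting formula as the six-vertex partition function with an extra boundary weight. In the Maya picture, the 2-core of $\Lambda$ is determined by the charge imbalance between occupied even and odd sites of $B_\Lambda$. Explicitly, $\kappa$ is read off from $\#\{\text{even occupied}\}-\#\{\text{odd occupied}\}$ suitably renormalized, exactly as in the skew case, where $d_\epsilon$ counted unbalanced dimers. I would therefore attach a fugacity $z$ to this charge and compute $\EE[z^{c(\Lambda)}]$ row by row. Each row of the stochastic model moves a single particle by a geometric jump with parameter $p_i$, and the parity of the jump changes the charge. This gives a product formula $\EE[z^{c}]=\prod_i f(p_i,z)$, or a $2\times2$ transfer matrix product. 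Setting $t=(z^{1/2}+z^{-1/2})/2$ style substitutions, where $V_k(\cos\theta)=\cos((k+\tfrac12)\theta)/\cos(\theta/2)$, converts $z^{c}$-moments into Chebyshev $V$-moments, because $V_k$ is exactly the symmetrized character $\sum_{j=-k}^{k}(-1)^{k-j}z^{j/2}$-type sum. This lets us match $\EE[V_{\kappa(\Lambda)}(t)]$ to the product computed in Step 1.

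\textbf{Main obstacle.} The hardest step is matching: the left side of Step 1 must become precisely the product/transfer-matrix form of Step 2, including the boundary effect of $m$ rows versus $N$ stabilization. I would first test the identity for $m=1,2$ against the examples $V_2,V_3$ and low-degree known classes of $\ssm(\Sigma^S_{N,r})$. I would then prove the general case by showing that both sides satisfy the same recursion in $m$, adding one variable $x_{m+1}$. On the geometric side this recursion comes from restricting to the block $\GL_m\times\GL_1$. On the probabilistic side it comes from the Markov property of the six-vertex model under addition of a row, where the new particle changes $\kappa$ by $\pm1$ or $0$. The three-term Chebyshev recurrence $V_k=2tV_{k-1}-V_{k-2}$ should be exactly the compatibility condition between the two recursions.
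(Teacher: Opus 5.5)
Your reduction to finite alphabets and your choice of statistic agree with the paper. The parity charge $\Delta_m(\lambda)=\sum_{b\in\mathcal B_m(\lambda)}(-1)^b$ determines $\kappa(\lambda)$ (\cref{lem:charge-core}). With $2t=z+z^{-1}$ (not $t=(z^{1/2}+z^{-1/2})/2$) one has $V_{\kappa(\lambda)}(t)=(z^{\Delta_m(\lambda)}+z^{1-\Delta_m(\lambda)})/(1+z)$. So the right-hand side is $\mathscr S_z$ applied to $\mathcal A_m(z;u)=\sum_{\ell(\lambda)\le m}z^{\Delta_m(\lambda)}\st_\lambda(u)$.

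Beyond that, however, neither side is actually computed. In Step~1, decomposing by the kernel only reproduces the outer sum over subsets $I\subset[m]$, $|I|=r$, in the \cite{PR} formula. The genuinely hard input is the CSM class $W^S_{m-r}(u_{\bar I})$ of the open orbit of nondegenerate forms on the quotient, and you leave it untouched. ``A Jacobi--Trudi/Pfaffian-like ratio'' gives you nothing concrete to match. The paper imports $W^S_{m,r}$ from \cite{PR} and repackages the corank generating series as the bordered Pfaffian of \cref{prop:PR-pf}, with bulk kernel $C^{(q)}$.

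Step~2 rests on a false picture of the model. A row does not move a single particle by a geometric jump. When the horizontal occupancy is $0$ and the vertical one is $1$, the bits swap with probability $1-p_u$. So a row can deposit its path, pick up an existing one, carry it on, and deposit it again. For example, exits $\{2,3\}$ ($\lambda=(1,1)$, $\kappa=0$) can become $\{1,3,5\}$ ($\lambda=(2,1)$, $\kappa=2$) in one row. Thus a single row can change $\kappa$ by $2$, and the charge increments are not independent across rows.

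In fact $\EE[z^{\Delta_2(\Lambda)}]=\mathcal A_2(z;x)$, and \cref{thm:charge-pf} gives
\[
 \mathcal A_2(z;x)
 =\frac{1}{(1+2x_1)(1+2x_2)}
 \Bigl[1+(1+z^2)(x_1+x_2)+4x_1x_2
 +\frac{(z-z^{-1})^2x_1x_2(x_1+x_2)}{1+x_1+x_2}\Bigr]
\]
(consistent with the Schur data of Figure~\ref{fig:stilde_vs_s} through degree~$3$). The irreducible pole along $1+x_1+x_2=0$ rules out any product $\prod_i f(p_i,z)$. It also rules out any product of finite matrices whose entries each depend on a single $p_i$. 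Likewise, restricting to $\GL_m\times\GL_1$ yields only the one specialization $u_{m+1}=0$, i.e.\ \eqref{eq:charge-zero}, which does not determine the $(m+1)$-variable function. The Chebyshev three-term recurrence is not the mechanism.

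The missing idea is a closed-form evaluation of $\mathcal A_m$. The paper writes $\st_\lambda$ through modified Robbins polynomials in beta-number coordinates. Removing the smallest beta number gives a recurrence that swaps $z\leftrightarrow z^{-1}$. The Pfaffian of \cref{thm:charge-pf} is then proved by Izergin--Korepin interpolation in $y_n$, which uses three ingredients:
\begin{enumerate}
 \item a degree bound in $y_n$;
 \item the values at $y_n\in\{0,1\}\cup\{y_i^{-1}\}$;
 \item cancellation of the apparent pole at $P^2=1$ via \cref{lem:four-residue}.
\end{enumerate}
Finally, $\mathscr S_z$ changes only the border of that Pfaffian and turns it into the Pfaffian of \cref{prop:PR-pf}, and this is what makes the two sides agree.
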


This formula proves, in particular, the symmetric transpose-invariance and sign
assertions of \cite[Conj.~6.2]{PR} and explains the repeated coefficients observed
there.  Again, the probability interpretation is immediate.

\begin{corollary}[Symmetric Chebyshev observable]\label{cor:sym-prob}
Under every positive finite-alphabet specialization,
\begin{equation*}
 \sum_{r\ge0}t^r\ssm(\Sigma^S_{\infty,r})(x_1,\ldots,x_m)
 =\EE\!\left[V_{\kappa(\Lambda)}(t)\right].
\end{equation*}
\end{corollary}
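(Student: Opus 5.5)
The plan is to deduce the corollary directly from \cref{thm:intro-sym} together with the six-vertex realization \cref{thm:intro-prob}.

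\textbf{Step 1: specialize the identity.} I take the generating-function identity of \cref{thm:intro-sym},
\[
 \sum_{r\ge0}t^r\ssm(\Sigma^S_{\infty,r})=\sum_\lambda V_{\kappa(\lambda)}(t)\,\st_\lambda,
\]
which holds in the completed symmetric-function ring with coefficients in $\ZZ[t]$. I apply the finite-alphabet specialization $(x_1,\ldots,x_m)$ with all $x_i\ge0$. Under this specialization $\st_\lambda(x_1,\ldots,x_m)=0$ whenever $\ell(\lambda)>m$, since $\st_\lambda$ has lowest term $s_\lambda$ and stable SSM classes restrict compatibly to finitely many variables. The right side therefore becomes $\sum_{\ell(\lambda)\le m}V_{\kappa(\lambda)}(t)\,\st_\lambda(x)$.

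\textbf{Step 2: identify the sum as an expectation.} By \cref{thm:intro-prob}, $\st_\lambda(x)=\PP(\Lambda=\lambda)$ for each such $\lambda$. The sum is then literally
\[
 \sum_\lambda V_{\kappa(\lambda)}(t)\,\PP(\Lambda=\lambda)=\EE\bigl[V_{\kappa(\Lambda)}(t)\bigr].
\]

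\textbf{Step 3 (main obstacle): justify exchanging specialization with the infinite sum.} The specialization is an evaluation of formal power series, and the sum over $\lambda$ is infinite, so both the specialization and the expectation need a convergence argument. Two routes are available.

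\emph{Coefficientwise route.} For each fixed power $t^r$, $[t^r]V_k(t)=0$ unless $k\ge r$, and $|[t^r]V_k|\le C_r k^{r}$ grows at most polynomially in $k$. So the $t^r$-coefficient is $\sum_\lambda c_{r,\kappa(\lambda)}\,\PP(\Lambda=\lambda)$ with polynomially bounded coefficients. I need $\EE[\kappa(\Lambda)^r]<\infty$, which follows from $\kappa(\lambda)\le\ell(\lambda)\le m$. Indeed, a staircase $\delta_k\subseteq\lambda$ forces $\ell(\lambda)\ge k$. Hence $\kappa(\Lambda)\le m$, and $V_{\kappa(\Lambda)}(t)$ is a bounded random polynomial of degree at most $m$. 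The expectation is therefore finite and absolutely convergent.

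\emph{Convergence of the specialized series.} Absolute convergence of the specialized series $\sum_\lambda\st_\lambda(x)$ is automatic, because its terms are nonnegative and sum to $1$ by \eqref{eq:sumone-intro}. Regrouping by $\lambda$ is legitimate: the specialization of the completed ring at nonnegative $x_i$ (with $p_i<1$) converges absolutely termwise, which is exactly the convergence underlying \cref{thm:intro-prob}. Evaluating each homogeneous component and summing then commutes with the finite linear combinations by core value $k\le m$. Concretely, the right side is $\sum_{k=0}^m V_k(t)\,\PP(\kappa(\Lambda)=k)$, where each $\PP(\kappa=k)=\sum_{\kappa(\lambda)=k}\st_\lambda(x)$ is a nonnegative convergent subseries.

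Finally, the left side is by definition the specialization of $\ssm(\Sigma^S_{\infty,r})$, which gives the claim.
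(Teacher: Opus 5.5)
Your proposal is correct and is essentially the paper's own argument: the paper regards the corollary as immediate from \cref{thm:intro-sym} and \cref{thm:intro-prob}, rewriting the right-hand side as $\sum_k\pi_kV_k(t)$ with $\pi_k=\PP(\core(\Lambda)=\delta_k)$, which is exactly your regrouping into the finitely many core classes allowed by $\kappa(\lambda)\le\ell(\lambda)\le m$. Your Step~3 goes beyond what the paper writes, but there are a few points to fix. Real-point nonnegativity and $\sum_\lambda\st_\lambda(x)=1$ come from \cref{thm:intro-prob}, not from the formal identity \eqref{eq:sumone-intro}, and they do not by themselves show that the formal identity in $\hat\Lambda_m$ survives numerical evaluation---the Schur expansions already diverge for $x\ge1$, e.g.\ $\st_\varnothing(x)=1/(1+x)$---so the clean justification is analytic continuation on the polydisc $|y_i|<1$, $y_i=x_i/(1+x_i)$, where both sides are analytic. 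Finally, the vanishing for $\ell(\lambda)>m$ comes from the support condition $\mu\supseteq\lambda$ in \eqref{eq:st-schur-expansion}, not merely from the lowest term.
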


The contrast between Corollaries~\ref{cor:skew-prob} and~\ref{cor:sym-prob} is
one of the organizing points of the paper:
\[
\begin{array}{ccl}
    \Lambda^2 & : & \text{corank is an event,}
    \\
    S^2 & : &  \text{corank is a Chebyshev observable of the random $2$-core}.
\end{array}
\]

The proofs of \cref{thm:intro-skew,thm:intro-sym} use the explicit CSM formulas
of \cite{PR}.  The main symmetric-function work uses the modified Robbins
formalism of \cite{FH}.  It expresses the finite specializations of the
$\st$-functions in beta-number coordinates.  We refine this formalism by
recording the Maya-dimer defect in the skew case and the
two-runner charge in the symmetric case.  Removing the smallest beta number
then gives deletion recurrences for the corresponding refined sums.  After a
change of variables, these recurrences are solved by Pfaffian interpolation:
their values at zero, one, and reciprocal pairs determine the candidates, while
explicit residue identities cancel the apparent poles.  This produces a
dimer-refined Littlewood identity in the skew case and a charge-refined
Littlewood identity in the symmetric case.  Finally, symmetrizing the two charge
states converts the charge statistic into the Chebyshev polynomials, and a
comparison with the Pfaffians of \cite{PR} identifies the resulting series with
the SSM classes of the matrix orbits.

\smallskip

The paper is organized as follows.  After recalling SSM characteristic classes in Section~\ref{sec:char_classes}, in \cref{sec:probability} we prove the
six-vertex realization and derive the ordinary-matrix boundary statistic.  In
\cref{sec:skew} we prove the skew Maya-dimer formula.
In \cref{sec:symmetric-statement} we develop the $2$-core/Chebyshev formalism and
state its consequences.  The charge-refined Littlewood identity is proved in
\cref{sec:charge}.  In \cref{sec:pr-symmetric} we connect that identity with the
\cite{PR} formula for symmetric matrices.  We return to the probability
interpretation in \cref{sec:prob-consequences}.

\section{SSM characteristic classes and the \texorpdfstring{$\st$}{s-tilde}-functions}
\label{sec:char_classes}

\subsection{The CSM and SSM classes}\label{s:ssm}
Let $\Sigma$ be a constructible subset of a smooth complex algebraic variety $M$.
The Chern--Schwartz--MacPherson class and the closely related
Segre--Schwartz--MacPherson class are characteristic classes that extend
the ordinary total Chern class of a smooth variety to singular and non-closed
subsets. Some of the recent interest in these classes is motivated by their
connections with geometric representation theory, in particular with Maulik-Okounkov stable envelopes \cite{MO}; see, for example, \cite{RV,FR,AMSS1}.
In this section we recall only the basic facts that will be used below. For broader introductions to characteristic classes of singular varieties, see the survey \cite{SY}, the
lectures \cite{SchLectures}, and Chapters~5--7 of \cite{CMTS}.

Responding to conjectures of Grothendieck and Deligne, MacPherson proved
\cite{MacPherson} the existence and uniqueness of a natural transformation
\[
 c_*:F(-)\longrightarrow H_*(-)
\]
from the functor of $\ZZ$-valued constructible functions to homology, both viewed
as covariant functors for proper morphisms of complex algebraic varieties, such
that
\[
 c_*(1_X)=c(TX)\cap[X]
\]
when $X$ is smooth. The pushforward of constructible functions along a proper map
$f:X\to Y$ is defined by taking Euler characteristics of the fibers.

For a constructible subset $\Sigma\subset M$ of a smooth variety, we define the
cohomological classes
\[
 \csm(\Sigma\subset M):=PD\bigl(c_*(1_\Sigma)\bigr),\qquad
 \ssm(\Sigma\subset M):=\frac{\csm(\Sigma\subset M)}{c(TM)},
\]
where $PD$ denotes the usual duality identification (with Borel--Moore homology in the noncompact case). These concepts are extended to the equivariant settings (the algebraic group $G$ acts on $M$; $f$, $\Sigma$ are $G$-invariant) in \cite{OhmotoCamb}.
We suppress equivariance from the notation, and write simply
$\csm(\Sigma)$ and $\ssm(\Sigma)$ when the ambient variety is clear. In most of
our applications the ambient space is a representation $V$ of $G$.

Two elementary properties will be central for us. Additivity of constructible
functions gives
\[
 \ssm(\Sigma_1\cup\Sigma_2)
 =\ssm(\Sigma_1)+\ssm(\Sigma_2)-\ssm(\Sigma_1\cap\Sigma_2),
\]
and the normalization of the MacPherson transformation gives
\[
 \ssm(V\subset V)=1.
\]
Thus, for a finite decomposition of $V$ into constructible pieces, the sum of
their SSM classes is $1$. This formal resemblance to the additivity and
normalization axioms of probability is the starting point of the present paper, cf. \cite[Remark~8.8]{FR}.

\begin{remark}
Characteristic classes, for example SSM classes, have also been studied from the local,
singularity-theoretic point of view, in relation to Chern--Mather classes, local Euler obstructions, and sectional Euler characteristics, see \cite{ZhangDet, ZhangOrbit} and references therein. 
\end{remark}

\subsection{\texorpdfstring{$\st$}{s-tilde} functions}
Fix a partition $\lambda$. For $k\ge\ell(\lambda)$ and $n\ge\lambda_1+k$,
$\lambda$ indexes a full-rank matrix Schubert cell in
$\Hom(\CC^k,\CC^n)$ for the action of $\GL_k\times B_n^-$. After forgetting
the $B_n^-$-equivariance and passing to the stable limit, its $\GL_k$-equivariant
SSM class is the symmetric function $\st_\lambda$; see \cite{FR} for more details on the construction and stabilization.

The function $\st_\lambda$ is a completed symmetric function with Schur expansion
\begin{equation}\label{eq:st-schur-expansion}
 \st_\lambda=\sum_{\mu\supseteq\lambda}a_{\lambda,\mu}s_\mu,
 \qquad a_{\lambda,\lambda}=1.
\end{equation}
For example,
\[
\st_{\varnothing}
=1-s_1 +(s_2+s_{11}) -(s_3+2s_{21}+s_{111})
 +(s_4+3s_{31}+s_{22}+3s_{211}+s_{1111})+\ldots,
\]
and the initial Schur coefficients of several other $\st$ functions are shown in
Figure~\ref{fig:stilde_vs_s}.
\begin{figure}
\[
\begin{array}{c ||r|r|rr|rrr|rrrrr|}
  & \varnothing & (1) & (2) & (11) & (3) & (21) & (111)
  & (4) & (31) & (22) & (211) & (1111) \\
  \noalign{\hrule height 1.2pt}
  \varnothing
  & 1 & -1 & 1 & 1 & -1 & -2 & -1 & 1 & 3 & 1 & 3 & 1 \\
  \hline
  (1)
  &   & 1 & -2 & -2 & 3 & 5 & 3 & -4 & -9 & -3 & -9 & -4 \\
  \hline
  (2)
  &   &   & 1 &   & -3 & -2 &   & 6 & 7 & 2 & 3 &   \\
  (11)
  &   &   &   & 1 &   & -2 & -3 &   & 3 & 2 & 7 & 6 \\
  \hline
  (3)
  &   &   &   &   & 1 &   &   & -4 & -2 &   &   &   \\
  (21)
  &   &   &   &   &   & 1 &   &   & -3 & -3 & -3 &   \\
  (111)
  &   &   &   &   &   &   & 1 &   &   &   & -2 & -4 \\
  \hline
  (4)
  &   &   &   &   &   &   &   & 1 &   &   &   &   \\
  (31)
  &   &   &   &   &   &   &   &   & 1 &   &   &   \\
  (22)
  &   &   &   &   &   &   &   &   &   & 1 &   &   \\
  (211)
  &   &   &   &   &   &   &   &   &   &   & 1 &   \\
  (1111)
  &   &   &   &   &   &   &   &   &   &   &   & 1\\
  \hline
\end{array}
\]
\caption{Schur coefficients of $\st_\lambda$ through total degree $4$.}
\label{fig:stilde_vs_s}
\end{figure}

We will occasionally use the explicit stable formula of \cite[Def.~8.2]{FR}, as follows. For an integer sequence $\alpha=(\alpha_1,\ldots,\alpha_N)$, set
\[
 s_\alpha=\det\bigl(c_{\alpha_i-i+j}\bigr)_{1\le i,j\le N},
 \qquad c_0=1,\quad c_j=0\ \text{for }j<0,
\]
and define the operator $\mathcal S_{z_1,\ldots,z_N}$ by
\[
 \mathcal S_{z_1,\ldots,z_N}
 \bigl(z_1^{\alpha_1}\cdots z_N^{\alpha_N}\bigr)=s_\alpha,
\]
extended linearly to formal power series. Rational factors are expanded at the
origin. Then for $\lambda=(\lambda_1,\ldots,\lambda_k)$ we have
\begin{equation*}
 \st_\lambda
 =\mathcal S_{z_1,z_2,\ldots}\left(
   \prod_{i=1}^k\left(\frac{z_i}{1+z_i}\right)^{\lambda_i}
   \prod_{j=1}^{\infty}\prod_{i=1}^{j}
   \frac{1+z_i-z_j}{1+z_i}
 \right).
\end{equation*}
The infinite expression is interpreted coefficientwise: the coefficient of every
fixed Schur function stabilizes after finitely many variables.

We record three structural properties that will be useful below. Let $\omega$
denote the standard involution of the ring of symmetric functions,
$\omega(s_\mu)=s_{\mu^T}$.

\begin{proposition}\label{prop:st-basic}
With the notation of \eqref{eq:st-schur-expansion}, the following hold.
\begin{enumerate}
 \item\label{item:st-sign} The Schur coefficients have alternating signs:
 \[
   (-1)^{|\mu|-|\lambda|}a_{\lambda,\mu}\ge0.
 \]
 \item\label{item:st-transpose} Transposition is compatible with the standard
 involution:
 \[
   \omega(\st_\lambda)=\st_{\lambda^T}.
 \]
 \item\label{item:st-row} For $r\ge0$,
 \[
 [s_{(r)}]\st_\lambda=
 \begin{cases}
   (-1)^{r-k}\binom{r}{k},&\lambda=(k),\quad 0\le k\le r,\\
   0,&\text{otherwise}.
 \end{cases}
 \]
\end{enumerate}
\end{proposition}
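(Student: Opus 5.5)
All three items will be derived from the explicit stable formula of \cite[Def.~8.2]{FR} recalled above. For \eqref{item:st-transpose} I will also use the geometric definition of $\st_\lambda$, since the algebraic formula is not manifestly symmetric under transposition.

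\textbf{Item \eqref{item:st-row}.} This is the most mechanical. The Schur function $s_{(r)}$ appears in $s_\alpha$ only through the one-variable part of the operator $\mathcal S$. I will extract $[s_{(r)}]$ by specializing the alphabet to a single variable $x$, where $s_\mu(x)=0$ unless $\mu=(\mu_1)$. This reduces the claim to computing $\st_\lambda(x)$. On one variable, $\st_\lambda(x)$ vanishes unless $\ell(\lambda)\le1$, since by the six-vertex realization (or directly from the triangularity \eqref{eq:st-schur-expansion}) only sets with $m=1$ contribute. For $\lambda=(k)$, one variable means a single path. It moves right with probability $p=x/(1+x)$ at each step and exits up with probability $1-p=1/(1+x)$. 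Hence
\[
\st_{(k)}(x)=\Bigl(\frac{x}{1+x}\Bigr)^k\frac{1}{1+x}=x^k(1+x)^{-k-1}.
\]
Expanding gives $\sum_r(-1)^{r-k}\binom{r}{k}x^r$, which is exactly the claimed coefficient. If I wish to avoid \cref{thm:intro-prob}, I can get the same expression directly from the formula: setting all $z_j$ beyond $z_1$ to contribute trivially to one-row terms leaves $\mathcal S_{z_1}\bigl(z_1^k(1+z_1)^{-k}\cdot(1+z_1)^{-1}\bigr)$, where the $j=i=1$ factor equals $1/(1+z_1)$.

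\textbf{Item \eqref{item:st-transpose}.} I plan to argue geometrically. Transposition $\Hom(\CC^k,\CC^n)\to\Hom(\CC^n,\CC^k)$, $\varphi\mapsto\varphi^T$, is an isomorphism. After stabilizing and using the standard $\GL$-duality $V\mapsto V^*$, which on Chern roots acts by $\alpha\mapsto-\alpha$ and on stable symmetric functions becomes $\omega$ up to sign-grading, it identifies the matrix Schubert cell of $\lambda$ with that of $\lambda^T$. The delicate point is that $\st_\lambda$ is defined using only $\GL_k$-equivariance in the limit $n\to\infty$. The transposed picture instead uses the other factor, so I must invoke the symmetry of the two-sided stabilization described in \cite{FR}. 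Concretely, I expect to combine the $\GL_k\times\GL_n$-equivariant class with the standard supersymmetric identity: restricting to $\beta=0$ versus $\alpha=0$ exchanges $s_\mu$ and $s_{\mu^T}$, with signs $(-1)^{|\mu|}$ absorbed by the grading convention. This bookkeeping of signs and the stabilization is where I expect the main obstacle. As a fallback, I will check the identity on the Robbins/beta-number form of \cite{FH}, where transposition corresponds to complementing Maya sets.

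\textbf{Item \eqref{item:st-sign}.} I will use the positivity proven in \cite{FR} (cf.\ \cite[Remark~8.8]{FR}). The SSM classes of Schubert cells, after the sign change $\alpha\mapsto-\alpha$, become Schur-positive, which follows from the Aluffi–Mihalcea–Schürmann–Su positivity of CSM classes \cite{AMSS1} transported to the matrix Schubert setting. Degree-$d$ homogeneous components pick up the sign $(-1)^{d}$ relative to the leading term $s_\lambda$ of degree $|\lambda|$, which gives the stated alternation.
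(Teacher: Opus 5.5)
Your item (3) is the paper's argument: triangularity \eqref{eq:st-schur-expansion} plus the one-variable value $\st_{(k)}(x)=x^k/(1+x)^{k+1}$. Obtaining that value from the one-row six-vertex model is fine and not circular.

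The genuine gap is item (2), and it is not a matter of sign bookkeeping. Transposition identifies $\Hom(\CC^k,\CC^n)$ with $\Hom(\CC^{n*},\CC^{k*})$ equivariantly for the \emph{same} $\GL_k$, now acting contragrediently on the target. So the image of the $\lambda$-cell still has class $\st_\lambda$, and reparametrizing by $g\mapsto g^{-T}$ only effects $f(\alpha)\mapsto f(-\alpha)$, i.e.\ $s_\mu\mapsto(-1)^{|\mu|}s_\mu$. That is the grading involution, not $\omega$. Nothing here produces $\lambda^T$, whose class lives on $\Hom(\CC^{k'},\CC^{n'})$ with $k'\ge\lambda_1$ and $\GL_{k'}$ acting on the source.

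Your other two routes fail as well. The supersymmetric exchange of $\alpha=0$ and $\beta=0$ needs a $\GL_k\times\GL_n$-equivariant class, but a single matrix Schubert cell is only $\GL_k\times B_n^-$-invariant; only unions such as the rank strata of \cref{prop:ordinary-corank-prob} are $\GL_n$-invariant. The Robbins fallback fails because $\omega$ does not commute with $\rho_m$: for example $\rho_1(s_{11})=0$ but $\rho_1(\omega s_{11})=\rho_1(s_2)\neq0$. Moreover, the complement of the $m$-beta set of $\lambda$ is the $\lambda_1$-beta set of $\lambda^T$, which lives in a different number of variables.

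The missing idea is the paper's Grassmannian duality.
\begin{itemize}
\item The quotient $\Hom^{\mathrm{inj}}(\CC^k,\CC^n)\to\mathrm{Gr}_k(\CC^n)$ by $\GL_k$ is a smooth fibration along which SSM classes pull back, since the relative tangent factor cancels between $\csm$ and $c(TM)$.
\item It sends $c_j\mapsto c_j(Q)=h_j(S^*)$, hence $s_\mu\mapsto\sigma_\mu$ inside the $k\times(n-k)$ box and $0$ outside.
\item The isomorphism $U\mapsto U^\perp$ onto $\mathrm{Gr}_{n-k}(\CC^{n*})$ carries the $\lambda$-cell to the $\lambda^T$-cell (for the opposite dual flag). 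It also identifies the dual of the new tautological subbundle with $Q$.
\item Since $s_\nu(Q)=s_{\nu^T}(S^*)$, this gives $a_{\lambda,\mu}=a_{\lambda^T,\mu^T}$ for $\mu$ in the box. Letting $k,n-k\to\infty$ gives $\omega(\st_\lambda)=\st_{\lambda^T}$.
\end{itemize}
So $\omega$ comes from trading $S$ for the complementary bundle $Q=\CC^n/S$, not from dualizing.

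Item (1) also rests on an invalid step. \cite{FR} only conjectured the alternation (Conj.~8.4; Remark~8.8 merely discusses its consequences), so it is not "proven in FR". It also does not follow from positivity of CSM classes of Schubert cells. For a cell of codimension $c$, the degree-$(c+d)$ part of $\ssm=\csm\cdot c(TX)^{-1}$ is $\sum_i\csm_{c+i}\,s_{d-i}(TX)$, where $s(TX)=c(TX)^{-1}$. Since $TX$ is globally generated, $(-1)^j s_j(TX)$ is effective, so the summands carry signs $(-1)^{d-i}$ that vary with $i$. CSM positivity therefore gives no control over the sign of the sum. You must invoke the Segre--MacPherson sign theorem itself, which is what the paper does by citing \cite[\S4.1]{AMSS}. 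For Grassmannian cells it transfers to the coefficients $a_{\lambda,\mu}$ by the same pullback along $\Hom^{\mathrm{inj}}\to\mathrm{Gr}_k(\CC^n)$.
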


\begin{proof}
Part~\ref{item:st-sign} was conjectured in \cite[Conj.~8.4]{FR} and is proved in
\cite[\S4.1]{AMSS}.

For Part~\ref{item:st-transpose}, duality of Grassmannians exchanges the Schubert
cell indexed by $\lambda$ with the one indexed by $\lambda^T$. More concretely,
after choosing a nondegenerate pairing, the orthogonal-complement map
$U\mapsto U^\perp$ identifies the corresponding Grassmannians and, with the
opposite dual flag, carries the Schubert condition for $\lambda$ to that for
$\lambda^T$. On stable Schubert classes the induced operation is precisely
$\omega$.

For Part~\ref{item:st-row}, the support condition $\mu\supseteq\lambda$ in
\eqref{eq:st-schur-expansion} shows that a one-row Schur function can occur only
when $\lambda=(k)$ is itself a row. Specializing to one variable gives
\[
 \st_{(k)}(x)=\frac{x^k}{(1+x)^{k+1}}
 =\sum_{r\ge k}(-1)^{r-k}\binom{r}{k}x^r,
\]
and $s_{(r)}(x)=x^r$, which proves the formula.
\end{proof}

Most importantly for us, additivity of SSM classes survives stabilization.
For fixed $k$ and $n$, the full-rank matrix Schubert cells form a constructible
decomposition of the full-rank locus in $\Hom(\CC^k,\CC^n)$. The complement has
codimension $n-k+1$, so as $n\to\infty$ its SSM class disappears in every
fixed degree. Letting subsequently $k\to\infty$ therefore gives the
coefficientwise identity
\begin{equation}\label{eq:sum-st-section2}
 \sum_\lambda \st_\lambda=1.
\end{equation}
The properties in \cref{prop:st-basic} and the identity
\eqref{eq:sum-st-section2} can already be seen in the initial coefficients of
Figure~\ref{fig:stilde_vs_s}.

\subsection{Finite specializations and stable limits}\label{ss:stable-limits}
The classes appearing in \cref{thm:intro-skew,thm:intro-sym} are stable limits of
finite-rank classes, and we fix here the (elementary) framework in which those
limits are taken.

Let $\hat\Lambda$ be the completed ring of symmetric functions, in which the
$\st_\lambda$ live, and let $\hat\Lambda_m$ be the completed ring of symmetric
functions in $m$ variables $u_1,\ldots,u_m$.
For $m\le m'$ write
\[
 \rho_m:\hat\Lambda\to\hat\Lambda_m,
 \qquad
 \rho^{m'}_m:\hat\Lambda_{m'}\to\hat\Lambda_m
\]
for the specializations $u_{m+1}=u_{m+2}=\cdots=0$, respectively
$u_{m+1}=\cdots=u_{m'}=0$.  Under the identification $\hat\Lambda_m
=\hat H^*_{\GL_m}(\mathrm{pt})$, the map $\rho^{m'}_m$ is the restriction
induced by $\GL_m\hookrightarrow\GL_{m'}$, $g\mapsto\operatorname{diag}(g,I_{m'-m})$.

Since $\rho_m(s_\mu)=s_\mu(u_1,\ldots,u_m)$ vanishes exactly when $\ell(\mu)>m$,
and since the Schur expansion~\eqref{eq:st-schur-expansion} of $\st_\lambda$ is
supported on partitions $\mu\supseteq\lambda$, we get
\begin{equation}\label{eq:rho-st}
 \rho_m(\st_\lambda)=\st_\lambda(u_1,\ldots,u_m),
 \qquad
 \rho_m(\st_\lambda)=0\quad\text{when }\ell(\lambda)>m.
\end{equation}
Moreover $\st_\lambda=s_\lambda+(\text{higher degree})$, so in each fixed degree
$N$ only the finitely many $\lambda$ with $|\lambda|\le N$ contribute.  Hence for
arbitrary constants $c_\lambda$ the formal sum $\sum_\lambda c_\lambda\st_\lambda$
is a well-defined element of $\hat\Lambda$, and by \eqref{eq:rho-st}
\begin{equation}\label{eq:rho-of-sum}
 \rho_m\Bigl(\sum_\lambda c_\lambda\st_\lambda\Bigr)
 =\sum_{\ell(\lambda)\le m}c_\lambda\,\st_\lambda(u_1,\ldots,u_m).
\end{equation}
Finally, $\rho_m$ restricts to an isomorphism in each degree $N\le m$, and
therefore $\hat\Lambda=\varprojlim_m\hat\Lambda_m$ degreewise.  Consequently, if
$S\subseteq\ZZ_{\ge0}$ is infinite and $(f_m)_{m\in S}$ satisfies
$\rho^{m'}_m(f_{m'})=f_m$ for all $m\le m'$ in $S$, then there is a unique
$f\in\hat\Lambda$ with $\rho_m(f)=f_m$ for all $m\in S$.  We call $f$ the
{\em stable limit} of the family $(f_m)$.  This is the only sense in which
``stabilization'' is used below.

\section{Stable SSM functions and a stochastic six-vertex measure}\label{sec:probability}

\subsection{Finite evaluations of \texorpdfstring{$\st$}{s-tilde} functions}
We will frequently evaluate completed symmetric functions on a finite positive
alphabet $x_1,\ldots,x_m\ge0$ by the specialization
\begin{equation}\label{eq:finite-alphabet}
 1+c_1t+c_2t^2+\cdots
 =\prod_{i=1}^m\frac{1}{1-x_it}.
\end{equation}
For $m=1$ one has
\[
 \st_{(r)}(x)=\frac{x^r}{(1+x)^{r+1}}.
\]
Writing $p=x/(1+x)$, this becomes $(1-p)p^r$. Thus the one-row random
partition of \cref{thm:intro-prob} is simply a geometric random variable:
{\em the probability of seeing exactly $r$ heads before the first tail is $\st_{(r)}$, where for one toss $\PP(heads)=p$}.

The two-variable specialization is already less trivial. If
$\lambda=(a,b)$ with $a\ge b\ge0$, put
\[
 p_1=\frac{x_1}{1+x_1},\qquad p_2=\frac{x_2}{1+x_2},\qquad d=a-b,
\]
and let $h_d(p_1,p_2)=p_1^d+p_1^{d-1}p_2+\cdots+p_2^d$, with $h_{-1}=0$. Then
\begin{equation*}
 \st_{(a,b)}(x_1,x_2)
 =(1-p_1)(1-p_2)(p_1p_2)^b
 \bigl((1+p_1p_2)h_d(p_1,p_2)-2p_1p_2\,h_{d-1}(p_1,p_2)\bigr).
\end{equation*}
In the homogeneous specialization $x_1=x_2$, hence $p_1=p_2=:p$, this simplifies to
\[
 \st_{(a,b)}(x_1,x_1)
 =(1-p)^2p^{a+b}
 \bigl((a-b+1)(1-p)^2+2p\bigr).
\]
These formulas foreshadow the positivity that will become transparent from the
six-vertex realization in the next subsection.

More generally, specializing the weight-function formula and the SSM identification in \cite[Def.~4.1 and Cor.~5.2]{FR}, the finite evaluation of $\st$ is
\begin{equation}\label{eq:tilde-from-P}
 \st_\lambda(x_1,\ldots,x_m)
 =\frac{P_I(x_1,\ldots,x_m)}{\prod_{u=1}^m(1+x_u)^n},
\end{equation}
where
$I=\{i_1<\cdots<i_m\}\subset\{1,\ldots,n\}$
corresponds to $\lambda$ as in \eqref{eq:I-lambda-intro}, and
\begin{equation}\label{eq:P-I}
 P_I(x_1,\ldots,x_m)
 =\Sym\left[
 \prod_{u=1}^m x_u^{i_u-1}(1+x_u)^{n-i_u}
 \prod_{u<v}\frac{1+x_v-x_u}{x_v-x_u}
 \right].
\end{equation}
Here $\Sym$ denotes symmetrization over the assignments of the variables
$x_1,\ldots,x_m$ to the exit positions $i_1<\cdots<i_m$.

\begin{remark}\label{rem:n-independence}
The left-hand side of \eqref{eq:tilde-from-P} does not depend on $n$, while its
right-hand side appears to.  It does not: the factor $\prod_{u=1}^m(1+x_u)^n$ is
symmetric, hence may be pulled out of the symmetrization in \eqref{eq:P-I}, and
\[
 \frac{P_I(x_1,\ldots,x_m)}{\prod_{u=1}^m(1+x_u)^n}
 =\Sym\left[
 \prod_{u=1}^m\frac{x_u^{i_u-1}}{(1+x_u)^{i_u}}
 \prod_{u<v}\frac{1+x_v-x_u}{x_v-x_u}
 \right],
\]
which involves only $I$.  Thus \eqref{eq:tilde-from-P} holds for every
$n\ge i_m$.
\end{remark}

\subsection{A matrix-Schubert partition function}
Consider the rational six-vertex model with $R$-matrix
\begin{equation}\label{eq:R-unnormalized}
 R(x)=
 \begin{pmatrix}
 1+x&0&0&0\\
 0&x&1&0\\
 0&1&x&0\\
 0&0&0&1+x
 \end{pmatrix}
\end{equation}
in the basis $00,01,10,11$.
\begin{figure}
\[
\begin{tikzpicture}[scale=.6, thick]
  \foreach \x in {1,...,6}
    \draw[gray!40] (\x,1) -- (\x,3);
  \foreach \y in {1,2,3}
    \draw[gray!40] (1,\y) -- (6,\y);
  \foreach \x in {1,...,6}
    \foreach \y in {1,2,3}
      \fill[gray!70] (\x,\y) circle (1.2pt);
  \foreach \y in {1,2,3}
    \node[left] at (-0.3,\y) {row \y};
  \foreach \x in {1,...,6}
    \node[below] at (\x,0.7) {\x};
  \draw[->, red,very thick]  (0,1) -- (1,1) -- (1,3) -- (1,3.6);
  \draw[->, blue, very thick] (0,2) -- (3,2) -- (3,3) -- (3,3.6);
  \draw[->, teal,very thick] (0,3) -- (6,3) -- (6,3.6);
\end{tikzpicture}
\]
\caption{A sample path configuration with three incoming paths and top exit set $I=\{1,3,6\}$.}
\label{fig:6vertex}
\end{figure}

Namely, consider a grid with $m$ rows and $n\gg 0$ columns. Paths enter the grid on the left. At each vertex of the grid 0 or 1 or 2 paths enter from the left and/or from down, and paths exit to the right and/or up. To such a choice a weight is assigned, according to the entries of the $R$-matrix (for example 00 means no path enters from left or below, 01 means no path from left but path from below, etc). In row $u$ we use the variable $x_u$ for $x$ in the $R$-matrix. Instead of path/no path we use the terminology {\em occupancy/no occupancy} as well.

That is, at a vertex in row $u$, the two incoming bits record
the occupancies of the horizontal edge from the left and the vertical edge from
below, while the two outgoing bits record the occupancies to the right and above.
A nonzero matrix entry therefore preserves the total number of occupied edges;
these are precisely the six allowed local vertex configurations. When the two
incoming bits agree, the outgoing bits are forced and the weight is $1+x_u$.
When they differ, the occupancies may either remain in place, with weight $x_u$,
or interchange, with weight $1$.

In our model we put one occupied horizontal edge on the left
boundary of each row, no occupied vertical edges on the bottom boundary, and
require the occupied top edges to be exactly those in the columns $I$, see Figure~\ref{fig:6vertex}.
As always, $I=\{i_1<\cdots<i_m\}\subset\{1,\ldots,n\}$
corresponds to $\lambda$ as in~\eqref{eq:I-lambda-intro}.

Since $|I|=m$, conservation forces all right-boundary edges to be unoccupied. The
weight of a configuration is the product of its local vertex weights, with row
$u$ carrying parameter $x_u$, and $Z_I(x_1,\ldots,x_m)$, the partition function, denotes the sum of these weights over all admissible configurations.

\begin{proposition}\label{prop:bethe}
We have
\begin{equation}\label{eq:bethe}
 Z_I(x_1,\ldots,x_m)
 =\Sym\left[
 \prod_{u=1}^m x_u^{i_u-1}(1+x_u)^{n-i_u}
 \prod_{u<v}\frac{1+x_v-x_u}{x_v-x_u}
 \right].
\end{equation}
Consequently $Z_I=P_I$.
\end{proposition}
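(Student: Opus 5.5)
We prove \eqref{eq:bethe} by induction on the number of rows $m$. We use the Yang--Baxter equation for the $R$-matrix \eqref{eq:R-unnormalized} to get symmetry in the row variables, and then identify the partition function by interpolation. The second statement, $Z_I=P_I$, is then immediate by comparing \eqref{eq:bethe} with \eqref{eq:P-I}, so all the work is in \eqref{eq:bethe}.

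\emph{Step 1 (one row).} For $m=1$ and $I=\{i\}$, the single path enters at the left of row $1$ and must exit at the top of column $i$.

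\begin{itemize}
\item At columns $j<i$ the incoming bits are (left $=1$, below $=0$) and the path continues to the right. These bits differ and nothing interchanges, so each such vertex has weight $x_1$.
\item At column $i$ the path turns up; this is an interchange, with weight $1$.
\item At columns $j>i$ both incoming bits are $0$, so each such vertex has weight $1+x_1$.
\end{itemize}

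Hence $Z_{\{i\}}=x_1^{i-1}(1+x_1)^{n-i}$, which matches \eqref{eq:bethe}.

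\emph{Step 2 (symmetry).} We show that $Z_I$ is symmetric in $x_1,\dots,x_m$ by the train argument. The $R$-matrix (suitably normalized) solves the Yang--Baxter equation with difference spectral parameter, because $R(x)$ is the rational $R$-matrix $x\,\mathrm{Id}+P$ twisted by a diagonal factor. To swap rows $u$ and $u+1$, insert a cross vertex with parameter $x_{u+1}-x_u$ at the left boundary. Both incoming left edges are occupied, so this vertex acts by a scalar. Now push the cross through all columns. At the right boundary both edges are unoccupied, so again the cross acts by a scalar, and the two scalars agree, since both equal the diagonal entry $1+(x_{u+1}-x_u)$. Thus $Z_I$ is invariant under $x_u\leftrightarrow x_{u+1}$. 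The right-hand side of \eqref{eq:bethe} is symmetric by construction.

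\emph{Step 3 (recursion and interpolation).} We consider the top row and condition on its configuration; alternatively, use the bottom row, where the entering path in row $1$ must exit upward somewhere. This gives a recursion expressing $Z_I$ as a sum over the column where the bottom path turns up. A cleaner route, which we would try first, is to treat $Z_I$ as a polynomial in $x_m$ of degree at most $n-1$ and check \eqref{eq:bethe} on enough specializations. The specializations we would use are:

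\begin{itemize}
\item $x_m=0$. Vertices in that row with differing incoming bits and no interchange vanish, which freezes the row's path into a specific shape and reduces to an $(m-1)$-row model.
\item $x_m=x_v-1$. This is where the factor $1+x_m-x_v$ vanishes.
\item $x_m=-1$. At this value the weight $1+x$ vanishes.
\end{itemize}

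Together with symmetry, the degree bound then pins down $Z_I$. On the right-hand side, the same specializations are computed from the symmetrized formula: terms vanish except those with $x_m$ placed appropriately, and the result reduces to the $(m-1)$-variable formula by induction.

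The main obstacle is Step 3: choosing evaluation points that both freeze the lattice model cleanly and give enough conditions in the degree count, and then verifying that the symmetrized rational expression specializes identically. If interpolation proves awkward, the fallback is a direct column-transfer (coordinate Bethe ansatz) computation. Write the transfer matrix of one column acting on occupancy states of the $m$ horizontal edges, and verify that the Bethe-type wavefunction
\[
\Sym\Bigl[\prod_u x_u^{i_u-1}(1+x_u)^{n-i_u}\prod_{u<v}\frac{1+x_v-x_u}{x_v-x_u}\Bigr]
\]
satisfies the column recursion. This reduces to the two-particle scattering identity furnished by the factor $(1+x_v-x_u)/(x_v-x_u)$, which is exactly the ratio of $R$-matrix entries.
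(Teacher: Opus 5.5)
\textbf{Steps 1 and 2 are correct. Step 3, which carries the whole content of the proposition, does not work as proposed.} On Step 2: $R(x)=x\operatorname{Id}+\TT$ is exactly Yang's rational $R$-matrix, so no diagonal twist is needed, and the crossing acts by $1+x_{u+1}-x_u$ on both $|11\rangle$ and $|00\rangle$.

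\textbf{The node count is insufficient.} Suppose the identity were established at all your nodes $x_m\in\{0,-1\}\cup\{x_v-1:v<m\}$. Symmetry then also supplies the mirror nodes $x_m=x_v+1$. So the difference of two symmetric candidates is only forced to be divisible by $\prod_u x_u(1+x_u)\prod_{u\ne v}(1+x_u-x_v)$, which has $x_m$-degree $2m$. But $Z_I$ can have much larger $x_m$-degree. After removing the frozen columns (each column left of $i_1$ contributes $\prod_u x_u$, each column right of $i_m$ contributes $\prod_u(1+x_u)$), its $x_m$-degree can reach $i_m-i_1=\lambda_1-\lambda_m+m-1$. Uniqueness therefore fails as soon as $\lambda_1-\lambda_m>m$. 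For instance, with $m=2$, $n=5$, $I=\{1,5\}$, both $Z_I$ and $Z_I+x_1x_2(1+x_1)(1+x_2)(1+x_1-x_2)(1+x_2-x_1)$ are symmetric, have $x_2$-degree at most $4=n-1$, and agree at every node.

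\textbf{The nodes $x_m=x_v-1$ have no lattice-side value in your plan.} There the crossing is $R(-1)=\TT-\operatorname{Id}$, which annihilates both $|11\rangle$ and $|00\rangle$, so the train argument degenerates to $0=0$. On the right-hand side of \eqref{eq:bethe}, the surviving terms (those assigning $x_m$ to an earlier exit than $x_v$) do not in general reduce to an $(m-1)$-variable instance of the formula.

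Your other two nodes are fine. $R(0)=\TT$ turns the top row into a shift, giving $Z_I|_{x_m=0}=\one_{i_1=1}\,Z_{\{i_2-1,\ldots,i_m-1\}}$. For $n=i_m$, every top-row vertex must have unequal inputs, giving $Z_I|_{x_m=-1}=(-1)^{n-1}Z_{\{i_1+1,\ldots,i_{m-1}+1\}}$. Here the right-hand sides are $(m-1)$-row partition functions with the same $n$, and both agree with the corresponding specializations of \eqref{eq:bethe}.

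\textbf{The fallback is the paper's route, but as sketched it does not close.} The paper does not prove \eqref{eq:bethe} from scratch. It identifies $Z_I$ with the rational six-vertex partition function with partial domain-wall boundary conditions and quotes the known coordinate Bethe-ansatz expression \cite{FW,MP}. Your fallback points in that direction, but the column recursion does not close on the family $\{Z_I\}$. If $n\in I$, stripping the last column leaves an $(n-1)$-column lattice in which one path leaves through the right boundary in some row $u$; the stripped column contributes $\prod_{w<u}(1+x_w)\prod_{w>u}x_w$. These partition functions are not of the form \eqref{eq:bethe}.

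In operator language, $Z_I$ is the $|I\rangle$-component of $B(x_m)\cdots B(x_1)|0\rangle$. Here $B$ (respectively $A$) is the row operator with a path entering on the left and none (respectively one) leaving on the right. The stripped lattice has one $B(x_u)$ replaced by $A(x_u)$, with $I$ replaced by $I\setminus\{n\}$. One must commute that $A$ through the $B$'s using the exchange relations coming from the Yang--Baxter equation. That computation is the actual source of the factors $\frac{1+x_v-x_u}{x_v-x_u}$ and of the symmetrization.

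Saying that the verification ``reduces to the two-particle scattering identity'' states the conclusion rather than proving it. To finish, you need to carry out this exchange-relation argument (or an equivalent row-adding recursion with explicit summation over interlacing exit sets), or cite it as the paper does.
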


\begin{proof}
This is the coordinate Bethe-ansatz expression for the rational six-vertex model
with partial domain-wall boundary conditions; see, for example, \cite{FW,MP}.
For the ordered exits $i_1<\cdots<i_m$, the one-particle propagation factor in
row $u$ is $x_u^{i_u-1}(1+x_u)^{n-i_u}$. Exchanging two row parameters produces
the two-body scattering factor
\[
 \frac{1+x_v-x_u}{x_v-x_u}.
\]
Summing over the assignments of the row parameters to the ordered exits is the
symmetrization in \eqref{eq:bethe}. This gives the displayed formula, which is
identical to \eqref{eq:P-I}.
\end{proof}

Since every local weight in \eqref{eq:R-unnormalized} belongs to
$\{1,x_u,1+x_u\}$, the partition-function interpretation immediately gives the
following positivity statement.

\begin{corollary}\label{cor:monomial-positive}
For every $I$,
$P_I(x_1,\ldots,x_m)\in\ZZ_{\ge0}[x_1,\ldots,x_m]$.
\end{corollary}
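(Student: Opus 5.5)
The statement is Corollary~\ref{cor:monomial-positive}: $P_I$ has nonnegative integer coefficients. I will deduce it directly from Proposition~\ref{prop:bethe}, which identifies $P_I$ with the partition function $Z_I$.

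First, I will argue that the configuration set is finite. The grid has $m$ rows and $n$ columns, so it has finitely many edges. A configuration is an occupancy assignment to these edges, subject to the boundary conditions and to the vertex rule that only the six nonzero entries of \eqref{eq:R-unnormalized} are allowed. Hence there are finitely many admissible configurations, and $Z_I(x_1,\ldots,x_m)$ is a finite sum of weights.

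Second, I will check each weight. The weight of a configuration is a product of local vertex weights. At a vertex in row $u$, the local weight is an entry of $R(x_u)$, and for an allowed configuration it lies in $\{1,\,x_u,\,1+x_u\}$. Each of these is a polynomial in $x_u$ with nonnegative integer coefficients. Products of such polynomials stay in $\ZZ_{\ge0}[x_1,\ldots,x_m]$, and so do finite sums of the products. Therefore $Z_I\in\ZZ_{\ge0}[x_1,\ldots,x_m]$.

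Third, Proposition~\ref{prop:bethe} gives $Z_I=P_I$, which transfers the conclusion to $P_I$. This is notable because the defining formula \eqref{eq:P-I} contains the denominators $x_v-x_u$ and is not visibly a polynomial, let alone a positive one.

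The only point needing care is that the identification $Z_I=P_I$ is an identity of rational functions. Since $Z_I$ is manifestly a polynomial, the identity also shows that the symmetrization in \eqref{eq:P-I} is a polynomial. All the real difficulty lies in Proposition~\ref{prop:bethe}, which is assumed here, so I expect no further obstacle.
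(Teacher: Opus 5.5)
Your proposal is correct and follows essentially the paper's argument: the paper also deduces the corollary directly from $Z_I=P_I$ (\cref{prop:bethe}) and the fact that every local weight lies in $\{1,x_u,1+x_u\}$. Your extra remarks on the finiteness of the configuration set and on the polynomiality of the symmetrization in \eqref{eq:P-I} make explicit what the paper leaves implicit.
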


\subsection{Stochastic normalization}
Put
$
 p_u={x_u}/(1+x_u)
$.
Dividing every row-$u$ local weight by $1+x_u$ turns
\eqref{eq:R-unnormalized} into
\begin{equation}\label{eq:R-stochastic}
 R_{p_u}
 =\begin{pmatrix}
 1&0&0&0\\
 0&p_u&1-p_u&0\\
 0&1-p_u&p_u&0\\
 0&0&0&1
 \end{pmatrix}
 =p_u\operatorname{Id}+(1-p_u)\TT,
\end{equation}
where $\operatorname{Id}$ is the identity matrix and $\TT$ is the transposition
matrix interchanging the two middle basis vectors $01$ and $10$.
Thus the local weights are stochastic: if the two incoming occupancies agree,
the transition is forced, while if they differ, the two bits remain in place
with probability $p_u$ and are interchanged with probability $1-p_u$.

For the probability model it is convenient to use a semi-infinite strip, with
columns indexed by $1,2,\ldots$, one occupied horizontal edge entering each row
from the left, and no occupied vertical edges entering from below. The finite
$m\times n$ rectangle above is then simply a truncation used to calculate the
probability of a prescribed finite set of top exits.

We are ready to prove the first theorem from the Introduction.

\begin{proof}[Proof of \cref{thm:intro-prob}]
Fix a top exit set $I=\{i_1<\cdots<i_m\}$ and choose $n\ge i_m$. On the event
that the top exits are exactly $I$, conservation implies that every horizontal
edge to the right of column $n$ is empty. Hence its probability in the
semi-infinite stochastic model is already computed in the $m\times n$ rectangle,
and equals
\[
 \frac{Z_I(x_1,\ldots,x_m)}{\prod_{u=1}^m(1+x_u)^n}.
\]
By \cref{prop:bethe} and \eqref{eq:tilde-from-P}, this is precisely
$\st_\lambda(x_1,\ldots,x_m)$.

It remains only to note that the model has exactly $m$ top exits almost surely.
Indeed, inductively in the rows, only finitely many vertical sites are occupied
before a new row is scanned. Once an occupied horizontal path has moved beyond
the rightmost of these sites, it encounters only empty vertical edges and turns
up at each successive column with probability $1-p_u>0$. Since $p_u<1$, it
turns up after finitely many further columns almost surely. Thus the possible
finite exit sets $I$ exhaust the sample space, and summing their probabilities
gives $1$.
\end{proof}

Consider the $\GL_k \times \GL_n$ representation $\Hom(\CC^k,\CC^n)$, and denote the Chern roots of the (standard representations of the) two factors of the group by $\alpha_1,\ldots,\alpha_k$ and $\beta_1,\ldots,\beta_n$.

\begin{proposition}[Ordinary corank as a boundary statistic]
\label{prop:ordinary-corank-prob}
Let $k\le n$, put $d=n-k$, and let
\[
 \Sigma^r_{k,n}
 =\{\varphi\in\Hom(\CC^k,\CC^n):\dim\ker\varphi=r\},
 \qquad 0\le r\le k.
\]
For a partition $\lambda$, define
\[
 h_d(\lambda)=\#\{j\ge1:\lambda_j\ge j+d\},
\]
the number of boxes on the $d$-shifted diagonal. Then
\begin{equation}\label{eq:FR-corank-diagonal}
 \ssm(\Sigma^r_{k,n})
 =\rho_{k,n}\!\left(
   \sum_{h_{n-k}(\lambda)=r}\st_\lambda
 \right),
\end{equation}
where $\rho_{k,n}$ is the supersymmetric specialization defined by
\[
 \rho_{k,n}(1+c_1t+c_2t^2+\cdots)
 =\frac{\prod_{j=1}^n(1+\beta_jt)}
        {\prod_{i=1}^k(1+\alpha_it)}.
\]
Under the positive finite-alphabet specialization
$\beta_j=0$ and $\alpha_i=-x_i$, where $x_1,\ldots,x_k\ge0$, one consequently has
\begin{equation}\label{eq:ordinary-corank-prob}
 \left.\ssm(\Sigma^r_{k,n})\right|_{\beta=0,\,\alpha_i=-x_i}
 =\PP\bigl(h_{n-k}(\Lambda)=r\bigr)
 =\PP\bigl(\#\{1\le a\le k:i_a>n\}=r\bigr).
\end{equation}
The last random variable is the number of occupied horizontal edges on the
right boundary of the $k\times n$ truncation of the six-vertex model.
\end{proposition}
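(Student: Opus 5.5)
The first identity \eqref{eq:FR-corank-diagonal} is, up to notation, Theorem~9.1 of \cite{FR}. So the first step is to translate that theorem into the present conventions rather than reprove it. Concretely, \cite{FR} expresses $\ssm(\Sigma^r_{k,n})$ as a sum of $\st_\lambda$ over partitions whose diagram, placed against the $k\times n$ frame, has exactly $r$ boxes on the diagonal shifted by $d=n-k$, evaluated under the supersymmetric substitution $c(\CC^n-\CC^k)$. I will check that the indexing set there agrees with $\{h_{n-k}(\lambda)=r\}$ and that the specialization is exactly $\rho_{k,n}$. This is a bookkeeping step.

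The main point is the second identity. Under $\beta=0$, $\alpha_i=-x_i$, the substitution $\rho_{k,n}$ becomes $\prod_i (1-x_it)^{-1}$, which is the finite-alphabet specialization \eqref{eq:finite-alphabet}. Hence, by \eqref{eq:rho-of-sum}, the right side of \eqref{eq:FR-corank-diagonal} becomes
\[
\sum_{\ell(\lambda)\le k,\ h_{n-k}(\lambda)=r}\st_\lambda(x_1,\ldots,x_k).
\]
By \cref{thm:intro-prob} with $m=k$, this equals $\PP(h_{n-k}(\Lambda)=r)$. One caveat: the specialization of the infinite sum should be justified. Here I use that the sum is coefficientwise finite, and that for $x_i\ge0$ small all series converge absolutely by the positivity of \cref{cor:monomial-positive}. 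Analytic continuation, or the fact that $\ssm(\Sigma^r_{k,n})$ is a rational function whose expansion matches, then extends this to all $x_i\ge0$.

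The remaining step is the combinatorial identity $h_{n-k}(\lambda)=\#\{a:i_a>n\}$ for $\ell(\lambda)\le k$, with $i_a=\lambda_{k+1-a}+a$. Writing $j=k+1-a$, the condition $i_a>n$ reads $\lambda_j+k+1-j>n$, that is, $\lambda_j\ge j+d$. Since $j$ ranges over $1,\ldots,k$ and $\lambda_j=0<j+d$ for $j>k$, the count is exactly $h_d(\lambda)$.

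Finally, I interpret this count in the six-vertex model. In the semi-infinite model, paths are conserved and are never lost, and no path ever moves left. So the number of occupied horizontal edges crossing from column $n$ to column $n+1$ equals the number of top exits in columns $>n$. This completes the argument. I expect the only real obstacle to be the precise matching with the conventions of \cite[Thm.~9.1]{FR}: orientation of the diagonal, the sign of $\alpha$, and the choice between $\ker$ and $\operatorname{coker}$. I would first verify this in the case $k=1$. There $\Sigma^1_{1,n}=\{0\}$, so its SSM class is $x^n/(1+x)^n$ in the variable $x=x_1$. The exit probability gives $\PP(i_1>n)=p^n$, and these agree.
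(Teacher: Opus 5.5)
Your proposal is correct and follows the paper's proof step for step. The paper cites \cite[Thm.~9.1]{FR} and notes that its indexing conditions $\lambda_r\ge r+d$, $\lambda_{r+1}\le r+d$ are exactly $h_d(\lambda)=r$. It then observes that $\beta=0$, $\alpha_i=-x_i$ turns $\rho_{k,n}$ into the finite-alphabet specialization, so \cref{thm:intro-prob} with $m=k$ applies. Finally it reindexes via $j=k+1-a$ and reads off the right-boundary count by conservation.

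Your convergence caveat goes beyond the paper, which simply evaluates termwise as in \cref{thm:intro-prob}. If you keep it, its justification needs repair. Positivity of $P_I$ (\cref{cor:monomial-positive}) only makes the terms $\st_\lambda(x)$ nonnegative at real $x\ge0$. It does not give absolute convergence of their Taylor expansions, whose coefficients alternate in sign by degree. Your continuation to all $x\ge0$ would also require showing that the infinite sum itself is analytic there.
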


\begin{proof}
By \cite[Theorem~9.1]{FR}, the sum in \eqref{eq:FR-corank-diagonal} is indexed by
the partitions satisfying
\[
 \lambda_r\ge r+d,
 \qquad
 \lambda_{r+1}\le r+d.
\]
Here the first condition is vacuous when $r=0$.  These conditions say precisely
that $h_d(\lambda)=r$.  After setting
$\beta_j=0$ and $\alpha_i=-x_i$, the defining series for $\rho_{k,n}$ becomes
\[
 \prod_{i=1}^k\frac{1}{1-x_it},
\]
which is the specialization \eqref{eq:finite-alphabet}.  The first equality in
\eqref{eq:ordinary-corank-prob} now follows from \cref{thm:intro-prob}.

For the second, recall that the exit positions and the endpoint partition are
related by $i_a=\lambda_{k+1-a}+a$.  Writing $j=k+1-a$, we have
\[
 \lambda_j\ge j+(n-k)
 \quad\Longleftrightarrow\quad
 i_a\ge n+1.
\]
Thus $h_{n-k}(\Lambda)=\#\{1\le a\le k:i_a>n\}$.  At the vertical cut after column $n$,
this is exactly the number of paths that have not yet exited through the top,
or equivalently the number of occupied horizontal right-boundary edges.
\end{proof}

By additivity, the closed determinantal locus
$
 D^{\ge r}_{k,n}
 =\{\varphi\in\Hom(\CC^k,\CC^n):\dim\ker\varphi\ge r\}
$
corresponds to the tail event
\begin{equation*}
 \left.\ssm(D^{\ge r}_{k,n})\right|_{\beta=0,\,\alpha_i=-x_i}
 =\PP\bigl(h_{n-k}(\Lambda)\ge r\bigr).
\end{equation*}
Thus for ordinary matrices, exact corank gives a probability mass and a closed
degeneracy locus gives the corresponding tail probability.

\begin{remark}[Relation with integrable probability]\label{rem:integrable-prob}
The local weights $(1,1,p,p,1-p,1-p)$ are the symmetric stochastic six-vertex
point, equivalently the rational/XXX degeneration of higher-spin or spin
Hall--Littlewood models \cite{BP}. Rational $R$-matrices also occur naturally in
the SSM and motivic Segre calculus of \cite{KZJ}. Our emphasis here is different:
we fix the family $\{\st_\lambda\}$ and use \eqref{eq:sumone-intro} to obtain a
random partition.

There is, however, an important caveat to this dictionary. The $q$-moment formulas
that make the trigonometric stochastic six-vertex model so computationally powerful
degenerate exactly at the rational/symmetric point used here, since the relevant
fugacity ratio tends to $1$ under the rational limit.
The process itself remains
perfectly well defined, and \cref{thm:intro-prob} is unaffected, but the standard
$q$-Laplace transform observables collapse in this limit and would need to be
replaced by other exact formulas, or recovered as suitable $q\to1$ derivatives,
before they can be brought to bear on the random partition $\Lambda$.
\end{remark}

\section{Skew-symmetric matrices: Maya dimers and corank probabilities}\label{sec:skew}

\subsection{The orbit stratification}
Let $\GL_m$ act on $\Lambda^2\CC^m$ in the usual way, and let
$u_1,\ldots,u_m$ denote the Chern roots of the standard representation. For
$r\equiv m\pmod2$, let $\Sigma^\wedge_{m,r}$ be the orbit of skew-symmetric forms
of corank $r$. In \cite{PR} the authors give explicit, but rather complicated localization-type formulas
$W^\wedge_{m,r}$ for the equivariant CSM classes of these orbits. Consequently,
\[
 \ssm(\Sigma^\wedge_{m,r})
 =\frac{W^\wedge_{m,r}}{c(\Lambda^2\CC^m)},
 \qquad
 c(\Lambda^2\CC^m)=\prod_{i<j}(1+u_i+u_j).
\]
Our goal is to expand these classes in the stable $\st$-basis. The answer is
considerably more rigid than a general expansion: every coefficient is $0$ or $1$,
and membership is governed by a single elementary statistic of the Maya diagram.
This will also give the promised probability interpretation of skew corank.

\subsection{Maya dimers}
For a partition $\lambda$, define its Maya set and occupancy variables by
\[
 B_\lambda=\{\lambda_i-i:i\ge1\}\subset\ZZ,
 \qquad
 w_j(\lambda)=\one_{j\in B_\lambda}.
\]
Thus the doubly infinite binary string $(w_j)_{j\in\ZZ}$ is eventually $1$ to the
left and eventually $0$ to the right.  The string for the empty partition
$\lambda=\varnothing=(0,0,0,\ldots)$, namely $\one_{\ZZ_{<0}}$, is called the
vacuum.  Every Maya string $(w_j)_{j\in\ZZ}$ is a finite perturbation of the
vacuum.

\begin{figure}[H]
\[
\begin{tikzpicture}[scale=.6]
 \pgfmathsetmacro{\y}{0};
 \pgfmathsetmacro{\x}{-10};                         \draw[]  (\x+0.5,\y-.5) -- (\x+0.5,\y+.5);
 \pgfmathsetmacro{\x}{-9}; \node at (\x,\y) {$1$};  \draw[]  (\x+0.5,\y-.5) -- (\x+0.5,\y+.5);
 \pgfmathsetmacro{\x}{-8}; \node at (\x,\y) {$1$};  \draw[]  (\x+0.5,\y-.5) -- (\x+0.5,\y+.5);
 \pgfmathsetmacro{\x}{-7}; \node at (\x,\y) {$1$};  \draw[]  (\x+0.5,\y-.5) -- (\x+0.5,\y+.5);
 \pgfmathsetmacro{\x}{-6}; \node at (\x,\y) {$0$};  \draw[]  (\x+0.5,\y-.5) -- (\x+0.5,\y+.5);
 \pgfmathsetmacro{\x}{-5}; \node at (\x,\y) {$1$};  \draw[]  (\x+0.5,\y-.5) -- (\x+0.5,\y+.5);
 \pgfmathsetmacro{\x}{-4}; \node at (\x,\y) {$1$};  \draw[]  (\x+0.5,\y-.5) -- (\x+0.5,\y+.5);
 \pgfmathsetmacro{\x}{-3}; \node at (\x,\y) {$0$};  \draw[]  (\x+0.5,\y-.5) -- (\x+0.5,\y+.5);
 \pgfmathsetmacro{\x}{-2}; \node at (\x,\y) {$0$};  \draw[]  (\x+0.5,\y-.5) -- (\x+0.5,\y+.5);
 \pgfmathsetmacro{\x}{-1}; \node at (\x,\y) {$1$};  \draw[ultra thick]  (\x+0.5,\y-.5) -- (\x+0.5,\y+.5);
 \pgfmathsetmacro{\x}{0}; \node at (\x,\y) {$1$};  \draw[]  (\x+0.5,\y-.5) -- (\x+0.5,\y+.5);
 \pgfmathsetmacro{\x}{1}; \node at (\x,\y) {$1$};  \draw[]  (\x+0.5,\y-.5) -- (\x+0.5,\y+.5);
 \pgfmathsetmacro{\x}{2}; \node at (\x,\y) {$0$};  \draw[]  (\x+0.5,\y-.5) -- (\x+0.5,\y+.5);
 \pgfmathsetmacro{\x}{3}; \node at (\x,\y) {$0$};  \draw[]  (\x+0.5,\y-.5) -- (\x+0.5,\y+.5);
 \pgfmathsetmacro{\x}{4}; \node at (\x,\y) {$1$};  \draw[]  (\x+0.5,\y-.5) -- (\x+0.5,\y+.5);
 \pgfmathsetmacro{\x}{5}; \node at (\x,\y) {$0$};  \draw[]  (\x+0.5,\y-.5) -- (\x+0.5,\y+.5);
 \pgfmathsetmacro{\x}{6}; \node at (\x,\y) {$0$};  \draw[]  (\x+0.5,\y-.5) -- (\x+0.5,\y+.5);
 \pgfmathsetmacro{\x}{7}; \node at (\x,\y) {$0$};  \draw[]  (\x+0.5,\y-.5) -- (\x+0.5,\y+.5);
 \pgfmathsetmacro{\x}{8}; \node at (\x,\y) {$0$};  \draw[]  (\x+0.5,\y-.5) -- (\x+0.5,\y+.5);
    \draw[]  (-10,.5) -- (9,.5);
    \draw[]  (-10,-.5) -- (9,-.5);
 \node[left] at (-11,\y+1) {$j=$};
 \pgfmathsetmacro{\x}{-9}; \node at (\x,\y+1) {$-9$};
 \pgfmathsetmacro{\x}{-8}; \node at (\x,\y+1) {$-8$};
 \pgfmathsetmacro{\x}{-7}; \node at (\x,\y+1) {$-7$};
 \pgfmathsetmacro{\x}{-6}; \node at (\x,\y+1) {$-6$};
 \pgfmathsetmacro{\x}{-5}; \node at (\x,\y+1) {$-5$};
 \pgfmathsetmacro{\x}{-4}; \node at (\x,\y+1) {$-4$};
 \pgfmathsetmacro{\x}{-3}; \node at (\x,\y+1) {$-3$};
 \pgfmathsetmacro{\x}{-2}; \node at (\x,\y+1) {$-2$};
 \pgfmathsetmacro{\x}{-1}; \node at (\x,\y+1) {$-1$};
 \pgfmathsetmacro{\x}{0}; \node at (\x,\y+1) {$0$};
 \pgfmathsetmacro{\x}{1}; \node at (\x,\y+1) {$1$};
 \pgfmathsetmacro{\x}{2}; \node at (\x,\y+1) {$2$};
 \pgfmathsetmacro{\x}{3}; \node at (\x,\y+1) {$3$};
 \pgfmathsetmacro{\x}{4}; \node at (\x,\y+1) {$4$};
  \pgfmathsetmacro{\x}{5}; \node at (\x,\y+1) {$5$};
  \pgfmathsetmacro{\x}{6}; \node at (\x,\y+1) {$6$};
  \pgfmathsetmacro{\x}{7}; \node at (\x,\y+1) {$7$};
  \pgfmathsetmacro{\x}{8}; \node at (\x,\y+1) {$8$};
 \node[left] at (-11,\y) {$w_j(\lambda)=$};
 \node[blue,left] at (5,-2.2) {perturbation of the vacuum};
 \draw[blue,->] (-2,-1.9) to[out=170,in=-80] (-6,-0.6);
 \draw[blue,->] (-2,-1.9) to[out=100,in=-90] (-3,-0.6);
 \draw[blue,->] (-2,-1.9) to[out=100,in=-90] (-2,-0.6);
 \draw[blue,->] (-2,-1.9) to[out=60,in=-90] (0,-0.6);
 \draw[blue,->] (-2,-1.9) to[out=30,in=-90] (1,-0.6);
 \draw[blue,->] (-2,-1.9) to[out=10,in=-140] (4,-0.6);
 \pgfmathsetmacro{\y}{-4};
\draw[draw=red,thick,rounded corners=4pt]              (-8.4,\y-.4) rectangle (-6.6,\y+.4);
 \draw[draw=red,thick,rounded corners=4pt,fill=red!20] (-6.4,\y-.4) rectangle (-4.6,\y+.4);
 \draw[draw=red,thick,rounded corners=4pt,fill=red!20] (-4.4,\y-.4) rectangle (-2.6,\y+.4);
 \draw[draw=red,thick,rounded corners=4pt,fill=red!20] (-2.4,\y-.4) rectangle (-.6,\y+.4);
 \draw[draw=red,thick,rounded corners=4pt]             (-.4,\y-.4) rectangle (1.4,\y+.4);
 \draw[draw=red,thick,rounded corners=4pt]             (1.6,\y-.4) rectangle (3.4,\y+.4);
 \draw[draw=red,thick,rounded corners=4pt,fill=red!20] (3.6,\y-.4) rectangle (5.4,\y+.4);
 \draw[draw=red,thick,rounded corners=4pt]             (5.6,\y-.4) rectangle (7.4,\y+.4);
 \pgfmathsetmacro{\x}{-10};                         \draw[]  (\x+0.5,\y-.5) -- (\x+0.5,\y+.5);
 \pgfmathsetmacro{\x}{-9}; \node at (\x,\y) {$1$};  \draw[]  (\x+0.5,\y-.5) -- (\x+0.5,\y+.5);
 \pgfmathsetmacro{\x}{-8}; \node at (\x,\y) {$1$};  \draw[]  (\x+0.5,\y-.5) -- (\x+0.5,\y+.5);
 \pgfmathsetmacro{\x}{-7}; \node at (\x,\y) {$1$};  \draw[]  (\x+0.5,\y-.5) -- (\x+0.5,\y+.5);
 \pgfmathsetmacro{\x}{-6}; \node at (\x,\y) {$0$};  \draw[]  (\x+0.5,\y-.5) -- (\x+0.5,\y+.5);
 \pgfmathsetmacro{\x}{-5}; \node at (\x,\y) {$1$};  \draw[]  (\x+0.5,\y-.5) -- (\x+0.5,\y+.5);
 \pgfmathsetmacro{\x}{-4}; \node at (\x,\y) {$1$};  \draw[]  (\x+0.5,\y-.5) -- (\x+0.5,\y+.5);
 \pgfmathsetmacro{\x}{-3}; \node at (\x,\y) {$0$};  \draw[]  (\x+0.5,\y-.5) -- (\x+0.5,\y+.5);
 \pgfmathsetmacro{\x}{-2}; \node at (\x,\y) {$0$};  \draw[]  (\x+0.5,\y-.5) -- (\x+0.5,\y+.5);
 \pgfmathsetmacro{\x}{-1}; \node at (\x,\y) {$1$};  \draw[ultra thick]  (\x+0.5,\y-.5) -- (\x+0.5,\y+.5);
 \pgfmathsetmacro{\x}{0}; \node at (\x,\y) {$1$};  \draw[]  (\x+0.5,\y-.5) -- (\x+0.5,\y+.5);
 \pgfmathsetmacro{\x}{1}; \node at (\x,\y) {$1$};  \draw[]  (\x+0.5,\y-.5) -- (\x+0.5,\y+.5);
 \pgfmathsetmacro{\x}{2}; \node at (\x,\y) {$0$};  \draw[]  (\x+0.5,\y-.5) -- (\x+0.5,\y+.5);
 \pgfmathsetmacro{\x}{3}; \node at (\x,\y) {$0$};  \draw[]  (\x+0.5,\y-.5) -- (\x+0.5,\y+.5);
 \pgfmathsetmacro{\x}{4}; \node at (\x,\y) {$1$};  \draw[]  (\x+0.5,\y-.5) -- (\x+0.5,\y+.5);
 \pgfmathsetmacro{\x}{5}; \node at (\x,\y) {$0$};  \draw[]  (\x+0.5,\y-.5) -- (\x+0.5,\y+.5);
 \pgfmathsetmacro{\x}{6}; \node at (\x,\y) {$0$};  \draw[]  (\x+0.5,\y-.5) -- (\x+0.5,\y+.5);
 \pgfmathsetmacro{\x}{7}; \node at (\x,\y) {$0$};  \draw[]  (\x+0.5,\y-.5) -- (\x+0.5,\y+.5);
 \pgfmathsetmacro{\x}{8}; \node at (\x,\y) {$0$};  \draw[]  (\x+0.5,\y-.5) -- (\x+0.5,\y+.5);
    \draw[]  (-10,\y+.5) -- (9,\y+.5);
    \draw[]  (-10,\y+-.5) -- (9,\y+-.5);
 \node[left] at (-11,\y) {$d_0=4$};
  \pgfmathsetmacro{\y}{-6};
 \draw[draw=red,thick,rounded corners=4pt]             (-9.4,\y-.4) rectangle (-7.6,\y+.4);
 \draw[draw=red,thick,rounded corners=4pt,fill=red!20]  (-7.4,\y-.4) rectangle (-5.6,\y+.4);
 \draw[draw=red,thick,rounded corners=4pt]             (-5.4,\y-.4) rectangle (-3.6,\y+.4);
 \draw[draw=red,thick,rounded corners=4pt]             (-3.4,\y-.4) rectangle (-1.6,\y+.4);
 \draw[draw=red,thick,rounded corners=4pt]             (-1.4,\y-.4) rectangle (.4,\y+.4);
 \draw[draw=red,thick,rounded corners=4pt,fill=red!20] (.6,\y-.4) rectangle (2.4,\y+.4);
 \draw[draw=red,thick,rounded corners=4pt,fill=red!20] (2.6,\y-.4) rectangle (4.4,\y+.4);
 \draw[draw=red,thick,rounded corners=4pt]             (4.6,\y-.4) rectangle (6.4,\y+.4);
 \draw[draw=red,thick,rounded corners=4pt]             (6.6,\y-.4) rectangle (8.4,\y+.4);
 \pgfmathsetmacro{\x}{-10};                         \draw[]  (\x+0.5,\y-.5) -- (\x+0.5,\y+.5);
 \pgfmathsetmacro{\x}{-9}; \node at (\x,\y) {$1$};  \draw[]  (\x+0.5,\y-.5) -- (\x+0.5,\y+.5);
 \pgfmathsetmacro{\x}{-8}; \node at (\x,\y) {$1$};  \draw[]  (\x+0.5,\y-.5) -- (\x+0.5,\y+.5);
 \pgfmathsetmacro{\x}{-7}; \node at (\x,\y) {$1$};  \draw[]  (\x+0.5,\y-.5) -- (\x+0.5,\y+.5);
 \pgfmathsetmacro{\x}{-6}; \node at (\x,\y) {$0$};  \draw[]  (\x+0.5,\y-.5) -- (\x+0.5,\y+.5);
 \pgfmathsetmacro{\x}{-5}; \node at (\x,\y) {$1$};  \draw[]  (\x+0.5,\y-.5) -- (\x+0.5,\y+.5);
 \pgfmathsetmacro{\x}{-4}; \node at (\x,\y) {$1$};  \draw[]  (\x+0.5,\y-.5) -- (\x+0.5,\y+.5);
 \pgfmathsetmacro{\x}{-3}; \node at (\x,\y) {$0$};  \draw[]  (\x+0.5,\y-.5) -- (\x+0.5,\y+.5);
 \pgfmathsetmacro{\x}{-2}; \node at (\x,\y) {$0$};  \draw[]  (\x+0.5,\y-.5) -- (\x+0.5,\y+.5);
 \pgfmathsetmacro{\x}{-1}; \node at (\x,\y) {$1$};  \draw[ultra thick]  (\x+0.5,\y-.5) -- (\x+0.5,\y+.5);
 \pgfmathsetmacro{\x}{0}; \node at (\x,\y) {$1$};  \draw[]  (\x+0.5,\y-.5) -- (\x+0.5,\y+.5);
 \pgfmathsetmacro{\x}{1}; \node at (\x,\y) {$1$};  \draw[]  (\x+0.5,\y-.5) -- (\x+0.5,\y+.5);
 \pgfmathsetmacro{\x}{2}; \node at (\x,\y) {$0$};  \draw[]  (\x+0.5,\y-.5) -- (\x+0.5,\y+.5);
 \pgfmathsetmacro{\x}{3}; \node at (\x,\y) {$0$};  \draw[]  (\x+0.5,\y-.5) -- (\x+0.5,\y+.5);
 \pgfmathsetmacro{\x}{4}; \node at (\x,\y) {$1$};  \draw[]  (\x+0.5,\y-.5) -- (\x+0.5,\y+.5);
 \pgfmathsetmacro{\x}{5}; \node at (\x,\y) {$0$};  \draw[]  (\x+0.5,\y-.5) -- (\x+0.5,\y+.5);
 \pgfmathsetmacro{\x}{6}; \node at (\x,\y) {$0$};  \draw[]  (\x+0.5,\y-.5) -- (\x+0.5,\y+.5);
 \pgfmathsetmacro{\x}{7}; \node at (\x,\y) {$0$};  \draw[]  (\x+0.5,\y-.5) -- (\x+0.5,\y+.5);
 \pgfmathsetmacro{\x}{8}; \node at (\x,\y) {$0$};  \draw[]  (\x+0.5,\y-.5) -- (\x+0.5,\y+.5);
    \draw[]  (-10,\y+.5) -- (9,\y+.5);
    \draw[]  (-10,\y+-.5) -- (9,\y+-.5);
 \node[left] at (-11,\y) {$d_1=3$};
 \pgfmathsetmacro{\xs}{5};
 \pgfmathsetmacro{\y}{-8};
 \node[right] at (-13+\xs,\y) {Two-runner abacus ($m=6$):};
 \node[right] at (-13+\xs,\y-1)
 {$m$-beta set: $\mathcal B_6(\lambda)=\{1,2,5,6,7,10\}$};
 \pgfmathsetmacro{\y}{-10};
\draw[draw=red,thick,rounded corners=4pt,fill=red!20]  (-8+\xs,\y-.4) rectangle (-7.2+\xs,\y+.4);
 \draw[draw=red,thick,rounded corners=4pt,fill=red!20]  (-6+\xs,\y-.4) rectangle (-5.2+\xs,\y+.4);
 \draw[draw=red,thick,rounded corners=4pt,fill=red!20]  (-3.75+\xs,\y-.4) rectangle (-3.05+\xs,\y+.4);
 \draw[draw=red,thick,rounded corners=4pt,fill=red!20]  (-9+\xs,\y-1.4) rectangle (-8.2+\xs,\y-.6);
 \draw[draw=red,thick,rounded corners=4pt,fill=red!20]  (-7+\xs,\y-1.4) rectangle (-6.2+\xs,\y-.6);
 \draw[draw=red,thick,rounded corners=4pt,fill=red!20]  (-6+\xs,\y-1.4) rectangle (-5.2+\xs,\y-.6);
 \node[right] at (-13+\xs,\y) {runner 0};
 \node[right] at (-13+\xs,\y-1) {runner 1};
 \draw[]  (-13+\xs,\y-.5) -- (-3+\xs,\y-.5);
 \draw[]  (-9.5+\xs,\y+.3) -- (-9.5+\xs,\y-1.4);
 \node[right] at (-9+\xs,\y) {0};
 \node[right] at (-8+\xs,\y) {2};
 \node[right] at (-7+\xs,\y) {4};
 \node[right] at (-6+\xs,\y) {6};
 \node[right] at (-5+\xs,\y) {8};
 \node[right] at (-4+\xs,\y) {10};
 \node[right] at (-9+\xs,\y-1) {1};
 \node[right] at (-8+\xs,\y-1) {3};
 \node[right] at (-7+\xs,\y-1) {5};
 \node[right] at (-6+\xs,\y-1) {7};
 \node[right] at (-5+\xs,\y-1) {9};
 \node[right] at (-4+\xs,\y-1) {11};
\end{tikzpicture}
\]
\caption{The Maya string and its two dimerizations for
$\lambda=(5,3,3,3,1,1)$, whose Maya set is
$B_\lambda=\{4,1,0,-1,-4,-5,-7,-8,-9,\ldots\}$.  The lower panel shows the
two-runner abacus for the same partition after choosing $m=6$.  Its $m$-beta set
is
$\mathcal B_6(\lambda)=(B_\lambda+6)\cap\ZZ_{\geq0}
=\{1,2,5,6,7,10\}$; these integers are the bead positions, arranged on runners
$0$ and $1$ according to parity.  Thus the lower panel is an $m$-dependent finite
encoding of the same Maya data.  This encoding is used in \cref{sec:charge} to
define the two-runner charge.}
\label{fig:Maya_dimer}
\end{figure}

We consider the two natural {\em dimerizations} of
$\ZZ$,
\[
 \{2k,2k+1\}\quad\text{and}\quad\{2k-1,2k\},\qquad k\in\ZZ.
\]
For $\epsilon\in\{0,1\}$, let $d_\epsilon(\lambda)$ be the number of dimers in
the corresponding dimerization whose two endpoints have different occupancies:
\[
 d_0(\lambda)=\sum_{k\in\ZZ}|w_{2k}-w_{2k+1}|,\qquad\qquad
 d_1(\lambda)=\sum_{k\in\ZZ}|w_{2k-1}-w_{2k}|.
\]
The sums are finite because the Maya string agrees with the vacuum sufficiently
far in both directions. We may therefore think of $d_\epsilon$ as the number of
``mixed'' dimers, of type $01$ or $10$.
The two statistics have fixed parity:
\begin{equation*}
 d_0(\lambda)\in2\ZZ_{\ge0},\qquad\qquad
 d_1(\lambda)\in2\ZZ_{\ge0}+1.
\end{equation*}
Conjugation of partitions acts on the Maya string by particle-hole reflection,
\[
 w_j(\lambda^T)=1-w_{-j-1}(\lambda).
\]
This reflection preserves each of the two dimerizations and therefore
\begin{equation}\label{eq:dimer-transpose}
 d_\epsilon(\lambda^T)=d_\epsilon(\lambda).
\end{equation}
For $r\equiv\epsilon\pmod2$, the partition of smallest size with
$d_\epsilon(\lambda)=r$ is $\varnothing$ when $r=0$, and for $r\ge1$ it is the
staircase
\[
 \delta_{r-1}=(r-1,r-2,\ldots,1).
\]
In either case its size is $\binom r2$, exactly the codimension of the
skew-symmetric corank-$r$ locus.

There is also a finite $m$-beta-set version of the statistic.  Assume
$\ell(\lambda)\le m$ and define
\[
 \mathcal B_m(\lambda)
 =\{\lambda_i+m-i:1\le i\le m\}
 =(B_\lambda+m)\cap\ZZ_{\geq0}.
\]
The elements of $\mathcal B_m(\lambda)$ are the bead positions on the two-runner
abacus, with the even positions on runner $0$ and the odd positions on runner
$1$.  Write the $m$-beta set in increasing order as
\[
 0\le b_1<\cdots<b_m.
\]
When $\epsilon\equiv m\pmod2$, translating the Maya diagram by $m$ identifies
$d_\epsilon(\lambda)$ with the number of nonnegative pairs
\[
 \{0,1\},\{2,3\},\{4,5\},\ldots
\]
that contain exactly one beta number. In particular,
\[
 d_\epsilon(\lambda)\le m,
 \qquad d_\epsilon(\lambda)\equiv m\pmod2.
\]

\subsection{A refined Littlewood identity}
Define the modified Robbins polynomials as
\begin{equation}\label{eq:robbins}
 \Rstar_B(y;1,1,-2)
 =\frac{
 \ASym\left[
 \prod_{i<j}(1-2y_i+y_iy_j)\prod_i y_i^{b_i}
 \right]
 }{\prod_{i<j}(y_j-y_i)}.
\end{equation}
This is the $w=-2$ specialization of the modified Robbins functions studied in
\cite{FH}.
Our finite-alphabet stable SSM function can be expressed using modified Robbins polynomials as
\begin{equation}\label{eq:robbins-tilde}
 \st_B(u_1,\ldots,u_m)
 =\prod_{i=1}^m(1-y_i)\,
 \Rstar_B(y_1,\ldots,y_m;1,1,-2),
\end{equation}
where $B=(b_1<\cdots<b_m)$ is the beta sequence, and $y_i={u_i}/{(1+u_i)}$.
Define the refined sum
\[
 \Psi_m(t;u)
 =\sum_{\ell(\lambda)\le m}
 t^{d_{m\bmod2}(\lambda)}\st_\lambda(u).
\]

\begin{theorem}[Dimer-refined Littlewood identity]\label{thm:skew-littlewood}
Let $M=M(t;u)$ be the skew-symmetric $m\times m$ matrix with entries
\begin{equation}\label{eq:skew-M}
 M_{ij}
 =\frac{(u_i-u_j)\bigl(1+t^2(u_i+u_j)\bigr)}
        {(u_i+u_j)(1+u_i+u_j)},
 \qquad 1\le i,j\le m.
\end{equation}
For even $m$,
\begin{equation}\label{eq:skew-pf-even}
 \Psi_m(t;u)
 =\prod_{i<j}\frac{u_i+u_j}{u_i-u_j}\,\Pf(M).
\end{equation}
For odd $m$, the same formula holds with $M$ replaced by the skew-symmetric
$(m+1)\times(m+1)$ matrix obtained from $M$ by adjoining an auxiliary index $0$
with
\[
 M_{0j}=t=-M_{j0}\qquad(1\le j\le m).
\]
\end{theorem}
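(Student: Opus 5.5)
The proof follows the strategy announced in the introduction: a deletion recurrence in $m$ for $\Psi_m$, followed by Pfaffian interpolation. The plan is to pass to the variables $y_i=u_i/(1+u_i)$ and work with the Robbins expression \eqref{eq:robbins-tilde}. Summing over all $\ell(\lambda)\le m$ is the same as summing over all $m$-subsets $B=\{b_1<\cdots<b_m\}\subset\ZZ_{\ge0}$, and since $\epsilon\equiv m\pmod 2$ the weight $t^{d_\epsilon(\lambda)}$ counts the pairs $\{2k,2k+1\}$ containing exactly one beta number. Because the numerator of \eqref{eq:robbins} is an antisymmetrization, the sum becomes
\[
 \Psi_m=\frac{\prod_i(1-y_i)}{\prod_{i<j}(y_j-y_i)}\ASym\Bigl[\prod_{i<j}(1-2y_i+y_iy_j)\sum_{B}t^{d(B)}\prod_i y_i^{b_i}\Bigr],
\]
which is an ordered geometric-type sum over $b_1<\cdots<b_m$.

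The key step is to remove the smallest beta number $b_1$. The factors $\prod_{j>1}(1-2y_1+y_1y_j)$ couple $y_1$ to the others only polynomially. The dimer weight of $B$ factors as the contribution of the pair containing $b_1$ times the weight of the shifted remaining set. Concretely, I will split according to whether $b_1$ is even or odd and whether $b_1$ and $b_1\pm1$ lie in one pair. Then I shift the remaining betas by $b_1+1$ or $b_1+2$ so that their parity pattern is restored. Summing the geometric series in $b_1$ should then express $\Psi_m(y_1,\dots,y_m)$ as an antisymmetrized combination of rational functions of $y_1$, multiplied by $\Psi_{m-1}(y_2,\dots,y_m)$ evaluated in the same variables. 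Parity flips between $m$ and $m-1$, which is why the dimerization index $m\bmod 2$ alternates. I would first verify the recurrence for $m=1,2$ by hand. For $m=1$ the Pfaffian of the $2\times2$ bordered matrix is $t$, and indeed $\sum_b t^{d}\st_{(b)}=t\sum_b(1-p)p^b=t$ because every single bead gives $d_1=1$.

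Next I check that the right-hand side satisfies the same recurrence. The Pfaffian admits expansion along its first row, and the prefactor $\prod_{i<j}(u_i+u_j)/(u_i-u_j)$ turns the entries into $\frac{(u_i+u_j)^{\ldots}}{\ldots}$ compatible with the antisymmetrization. The cleaner route is interpolation. Both sides are symmetric in $u$, and after clearing the denominators $\prod(1+u_i+u_j)$ and powers of $(1+u_i)$ each becomes a polynomial in $u_1$ of bounded degree, with the other variables held fixed. So it suffices to match the two sides at enough points. The natural points are:
\begin{itemize}
 \item $u_1=0$, where the left side reduces to $t\cdot\Psi_{m-1}$-type terms because only $b_1=0$ survives, and the Pfaffian reduces via $M_{1j}\to1$ and the auxiliary index;
 \item $u_1=\infty$, i.e.\ $y_1=1$;
 \item reciprocal pairs $y_1y_j=\ldots$, where $1-2y_1+y_1y_j$ vanishes and the left side factors.
\end{itemize}
At each point the right side also degenerates to a smaller Pfaffian, and I close the argument by induction on $m$.

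The main obstacle is the apparent poles. On the left, the geometric sums produce denominators such as $1-y_1y_j$ and $1-y_1^2$. On the right, $\prod(u_i-u_j)^{-1}$ and $(u_i+u_j)^{-1}$ appear. To prove polynomiality, and hence that finitely many interpolation values suffice, I need explicit residue identities showing that these poles cancel after antisymmetrization. The first thing I would try is to compute the residue of the left side at $y_1=1/y_2$, using the symmetry $y_1\leftrightarrow y_2$, and show that it vanishes identically. Once polynomiality and a degree bound are established, the interpolation matching and the odd-$m$ bordering with $M_{0j}=t$ should follow routinely.
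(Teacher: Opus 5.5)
Your overall architecture (delete the smallest beta number in the Robbins antisymmetrization, then interpolate at $y=0$, $y=1$ and reciprocal pairs, using a residue identity to remove an apparent pole) is the one the paper uses. As you set it up, however, the induction does not close on $\Psi$ alone.

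Deleting $b_1$ and shifting the rest by $b_1+1$ preserves the pairing $\{2k,2k+1\}$ only when $b_1$ is odd. When $b_1$ is even, the remaining set carries the other statistic $e_1(B')=\one_{0\notin B'}+\#\{\text{mixed pairs }\{1,2\},\{3,4\},\ldots\}$. Your alternative shift by $b_1+2$ is undefined when $b_1+1\in B$, since the shifted set would contain $-1$. So there is no recurrence expressing $\Psi_m$ through $\Psi_{m-1}$ alone.

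The same problem appears at your first node. By stability, $\Psi_m(u_1,\ldots,u_{m-1},0)=\sum_{\ell(\lambda)\le m-1}t^{d_{m\bmod 2}(\lambda)}\st_\lambda(u_1,\ldots,u_{m-1})$. This is the generating function of the \emph{other} dimerization in $m-1$ variables, not a $t\Psi_{m-1}$-type term. Already $\Psi_2(u_1,0)=(1+t^2u_1)/(1+u_1)$, whereas $t\Psi_1=t^2$. On the Pfaffian side, at $u_1=0$ one has $M_{1j}=-(1+t^2u_j)/(1+u_j)=-(1-\sigma y_j)$ with $\sigma=1-t^2$, not $1$. So you land on a Pfaffian bordered by $g_j=1-\sigma y_j$, about which your induction hypothesis says nothing.

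The missing idea is to carry both dimerizations simultaneously. The paper introduces $Z^{(1)}_n$ (statistic $e_1$) with its own Pfaffian candidate: kernel $C^{(1)}=C^{(0)}+\sigma(y\wedge\one)$, with border $g$ in odd rank. It then derives the coupled $2\times 2$ recurrence \eqref{eq:skew-coupled} and runs a simultaneous induction in which $y_n=0$ exchanges the two components.

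Your pole analysis also aims at the wrong target. The poles of $\Psi$ along $y_iy_j=1$ (i.e.\ $1+u_i+u_j=0$) are genuine; already $\Psi_2=(1+t^2(u_1+u_2))/(1+u_1+u_2)$. In general, after the Fischer--H\"ongesberg normalization by $\prod_{i<j}(1-y_iy_j)$, the values there are the nonzero reciprocal-pair data \eqref{eq:skew-recip}, proportional to $t^2-1$. So trying to show that the residue at $y_1=1/y_2$ vanishes must fail: these residues are interpolation data, not obstructions.

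The essential apparent pole is $1-P^2$ with $P=y_1\cdots y_n$, not $1-y_1^2$. It comes from the geometric sum over $b_1$ once all $n$ exponents are shifted. Its cancellation is not a property of the left side in isolation. The paper uses three ingredients:
\begin{enumerate}
\item the induction hypothesis in Pfaffian form for both components;
\item the row identity \eqref{eq:skew-row-identity}, a residue computation with poles at $\theta=0$, $1$, $a/(2a-1)$ and the $y_j$;
\item bordered Pfaffian row operations.
\end{enumerate}
Only this divisibility yields the degree bounds $\deg_{y_n}Z^{(0)}_n\le n-1$ and $\deg_{y_n}Z^{(1)}_n\le n$, which make the $n+1$ interpolation values sufficient.
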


We prove the theorem by a two-component version of the Izergin--Korepin
interpolation used by Fischer--H\"ongesberg in their proof of the unrefined
Littlewood identity \cite[pp.~679--681]{FH}. The point of the method is to
characterize the normalized refined sums uniquely from a degree bound and a small
set of boundary specializations; the Pfaffian expressions are then proved by
checking that they satisfy the same characterization.

For the interpolation argument we work throughout with the variables
\[
 y_i=\frac{u_i}{1+u_i},\qquad Y=(y_1,\ldots,y_n)
\]
already used in \eqref{eq:robbins}--\eqref{eq:robbins-tilde}; the number $n$ of
variables varies during the induction.
For a strict beta set $B=\{0\le b_1<\cdots<b_n\}$ define the two finite dimer
defects
\begin{align*}
 e_0(B)&=\#\{\{0,1\},\{2,3\},\ldots\text{ containing exactly one element of }B\},\\
 e_1(B)&=\one_{0\notin B}
 +\#\{\{1,2\},\{3,4\},\ldots\text{ containing exactly one element of }B\}.
\end{align*}
Translation of the Maya diagram by $n$ gives
\begin{equation}\label{eq:finite-infinite-dimer}
 e_0(\mathcal B_n(\lambda))=d_{n\bmod2}(\lambda),
 \qquad
 \mathcal B_n(\lambda)=\{\lambda_i+n-i:1\le i\le n\}.
\end{equation}
The second statistic $e_1$ is auxiliary; it is forced on us because setting the
last interpolation variable to $0$ interchanges the two dimerizations.

Set
\[
 \mathcal Z_n^{(\epsilon)}(t;Y)
 =\sum_{0\le b_1<\cdots<b_n}
 t^{e_\epsilon(B)}\Rstar_B(Y;1,1,-2)
\]
and, following the normalization in \cite{FH}, define
\begin{equation*}
 Z_n^{(\epsilon)}(t;Y)
 =\prod_{i=1}^n(1-y_i)\prod_{i<j}(1-y_iy_j)\,
 \mathcal Z_n^{(\epsilon)}(t;Y).
\end{equation*}
By \eqref{eq:robbins-tilde} and \eqref{eq:finite-infinite-dimer}, the first
component is exactly the normalized series in \cref{thm:skew-littlewood}:
\begin{equation}\label{eq:skew-Z0-Psi}
 Z_n^{(0)}(t;Y)=\prod_{i<j}(1-y_iy_j)\,\Psi_n(t;u).
\end{equation}
We use the initial values
\[
 Z_0^{(0)}=1,\qquad Z_0^{(1)}=t.
\]

Put $P=y_1\cdots y_n$ and $\sigma=1-t^2$, and assemble
\[
 \mathbf Z_n=
 \begin{pmatrix}Z_n^{(0)}\\Z_n^{(1)}\end{pmatrix}.
\]
The first occupied dimer decomposition gives the exact coupled recurrence
\begin{equation}\label{eq:skew-coupled}
 \mathbf Z_n(Y)
 =\frac{1}{1-P^2}
 \sum_{k=1}^n c_k(Y)
 \begin{pmatrix}
 tP&1\\
 1-\sigma P^2&tP
 \end{pmatrix}
 \mathbf Z_{n-1}(Y\setminus y_k),
\end{equation}
where
\begin{equation}\label{eq:ck-common}
 c_k(Y)
 =(1-y_k)
 \prod_{i\ne k}
 \frac{(1-2y_k+y_iy_k)(1-y_iy_k)y_i}{y_i-y_k}.
\end{equation}
Here is the derivation, including the effect of the dimer statistic.  In the
antisymmetrization formula \eqref{eq:robbins}, choose the variable $y_k$ that
carries the smallest beta number $b_1$.  Delete $b_1$, translate the remaining
beta numbers by $-b_1-1$, and delete $y_k$.  The factors involving $y_k$,
together with the normalizing factors in $Z_n^{(\epsilon)}$, combine to
$c_k(Y)$.  This is the first-beta-number step in the modified Robbins recursion
of \cite[pp.~679--681]{FH}.  What remains is a beta set of size $n-1$ in the
variables $Y\setminus y_k$.

For clarity, we record the extra bookkeeping caused by the refinement.  Write
$b_1=2a$ or $2a+1$.  Translating the remaining beta numbers by $-b_1-1$
either interchanges or preserves the two dimerizations, while the dimer
containing $b_1$ contributes one additional factor $t$ precisely when it is
mixed.  After summing over $a\ge0$, the common geometric series is
$\sum_{a\ge0}P^{2a}=(1-P^2)^{-1}$, and the four possible transitions are
\[
\begin{array}{c|cc}
 &e_0\text{ on the smaller beta set}&e_1\text{ on the smaller beta set}\\ \hline
e_0\text{ on }B&tP&1\\
e_1\text{ on }B&1-\sigma P^2&tP.
\end{array}
\]
For the lower-left entry, the two possibilities contribute
$(1-P^2)+t^2P^2=1-\sigma P^2$ before division by $1-P^2$; the other three
entries are read off directly from whether the translation switches runners
and whether the first dimer is mixed.  Multiplying this transition table by the
row-deletion coefficient $c_k(Y)$ and summing over $k$ proves
\eqref{eq:skew-coupled}.

The three boundary specializations are
\begin{align}
 Z_n^{(0)}(Y,0)&=Z_{n-1}^{(1)}(Y),  \qquad\qquad
 Z_n^{(1)}(Y,0)=Z_{n-1}^{(0)}(Y),\label{eq:skew-zero}\\
 Z_n^{(\epsilon)}(Y,1)
 &=t\prod_{i<n}(1-y_i)Z_{n-1}^{(\epsilon)}(Y),\notag\\
 Z_n^{(\epsilon)}(Y,a,a^{-1})
 &=(t^2-1)\frac{(a-1)^2}{a^{n-1}}
 \prod_{k\le n-2}
 (1-2y_k+ay_k)(y_k+a-2ay_k)
 Z_{n-2}^{(\epsilon)}(Y).
 \label{eq:skew-recip}
\end{align}

For completeness, we also describe the two Pfaffian candidates used in the
simultaneous induction. Set
\[
 S_{ij}=y_i+y_j-2y_iy_j,\qquad T_{ij}=1-y_iy_j,
\]
\[
 \Pi_n(Y)=\prod_{i<j}\frac{S_{ij}T_{ij}}{y_i-y_j},
\]
and define skew-symmetric kernels
\begin{align*}
 C^{(0)}_{ij}
 &=\frac{(y_i-y_j)(T_{ij}-\sigma S_{ij})}{S_{ij}T_{ij}},\\
 C^{(1)}_{ij}
 &=C^{(0)}_{ij}+\sigma(y_i-y_j).
\end{align*}
Finally put $g_i=1-\sigma y_i$. If $n$ is even, the candidates are
\begin{equation}\label{eq:skew-candidates-even}
 Z_n^{(0)}=\Pi_n(Y)\Pf(C^{(0)}),\qquad
 Z_n^{(1)}=t\Pi_n(Y)\Pf(C^{(1)}),
\end{equation}
and if $n$ is odd they are
\begin{equation}\label{eq:skew-candidates-odd}
 Z_n^{(0)}=\Pi_n(Y)\Pf
 \begin{pmatrix}0&t\one^T\\-t\one&C^{(0)}\end{pmatrix},\qquad
 Z_n^{(1)}=\Pi_n(Y)\Pf
 \begin{pmatrix}0&g^T\\-g&C^{(1)}\end{pmatrix}.
\end{equation}
Under $y_i=u_i/(1+u_i)$, the first formulas in
\eqref{eq:skew-candidates-even}--\eqref{eq:skew-candidates-odd}, together with
\eqref{eq:skew-Z0-Psi}, are exactly the Pfaffian stated in
\cref{thm:skew-littlewood}.

We now verify the boundary data on both sides of the proposed identity.  At
$y_n=0$, all terms of \eqref{eq:skew-coupled} except $k=n$ vanish, while
$c_n(Y,0)=1$ and $P=0$.  The transition matrix becomes
$\left(\begin{smallmatrix}0&1\\1&0\end{smallmatrix}\right)$, which proves
\eqref{eq:skew-zero} for the refined sums.  In the Pfaffian candidates the same
exchange follows from
\[
 C^{(0)}(a,0)=g(a),\qquad C^{(1)}(a,0)=1:
\]
the row and column belonging to the zero variable become exactly the border of
the other component, and $\Pi_n(Y,0)=\Pi_{n-1}(Y)$.

At $y_n=1$, substitution in the first-beta recursion gives
\[
 c_k(Y,1)=(1-y_k)c_k(Y)\quad(k<n),\qquad c_n(Y,1)=0.
\]
Using \eqref{eq:skew-coupled} one rank lower and collecting the common factors
gives the second line of \eqref{eq:skew-zero}.  On the Pfaffian side one has
\[
 C^{(0)}(a,1)=-t^2,\qquad C^{(1)}(a,1)=-g(a),
\]
and
\[
 \Pi_n(Y,1)=(-1)^{n-1}\prod_{i<n}(1-y_i)\Pi_{n-1}(Y).
\]
Expanding along the last row and column (and, in odd rank, first subtracting
the appropriate multiple of the border) gives
$t\prod_{i<n}(1-y_i)Z_{n-1}^{(\epsilon)}(Y)$, including the displayed sign.

Finally set $y_{n-1}=a$ and $y_n=a^{-1}$.  In the first-beta recursion the
terms with the two distinguished variables combine, and the remaining terms
are the rank-$(n-2)$ recurrence multiplied by
\[
 (t^2-1)\frac{(a-1)^2}{a^{n-1}}
 \prod_{k\le n-2}(1-2y_k+ay_k)(y_k+a-2ay_k).
\]
This proves \eqref{eq:skew-recip} for the refined sums.  For the candidates,
$\Pi_n$ has a simple zero from $T_{n-1,n}=1-y_{n-1}y_n$, whereas the entry
$C^{(\epsilon)}_{n-1,n}$ has a simple pole.  The rank-two update in
$C^{(1)}-C^{(0)}$ is regular at this point, and
\[
 \lim_{\theta\to a^{-1}}(1-a\theta)C^{(\epsilon)}(a,\theta)
 =(t^2-1)(a-a^{-1}).
\]
Consequently only Pfaffian matchings that pair the two distinguished indices
survive.  Multiplying this residue by
$\lim\Pi_n/\Pi_{n-2}$ gives exactly the preceding prefactor and proves
\eqref{eq:skew-recip} for both candidates.

The only apparent pole in \eqref{eq:skew-coupled} that is not already present in
the inductive terms is the factor $1-P^2$.  In fact the sum
$\sum_kc_k(Y)(\cdots)\mathbf Z_{n-1}(Y\setminus y_k)$ occurring in
\eqref{eq:skew-coupled} --- the numerator of the recurrence --- is divisible by
$1-P^2$. One way to see this is the Pfaffian row-operation argument that also
underlies the unrefined proof of \cite{FH}. Introduce the
coefficients
\[
 \alpha_j=\frac{1+y_j}{y_j}
 \prod_{p\ne j}\frac{1-y_jy_p}{y_j-y_p}
\]
and
\[
 b_i=\frac{1-y_i}{y_i}
 \prod_{p\ne i}\frac{1-2y_i+y_iy_p}{y_i+y_p-2y_iy_p}.
\]
On either component $P=\eta$, $\eta=\pm1$, one has the row identity
\begin{equation}\label{eq:skew-row-identity}
 b_i-(-1)^n\sum_j C^{(0)}_{ij}\alpha_j
 =\eta g_i+(-1)^nt^2.
\end{equation}

Identity \eqref{eq:skew-row-identity} is a residue computation of exactly the
type carried out in detail in \cref{lem:four-residue} below, with one pole
fewer; we deduce it from that computation rather than repeating it.  Write
\[
 S(a,\theta)=a+\theta-2a\theta,\qquad
 T(a,\theta)=1-a\theta,\qquad
 C^{(0)}(a,\theta)=\frac{(a-\theta)\bigl(T(a,\theta)-\sigma S(a,\theta)\bigr)}
                        {S(a,\theta)T(a,\theta)},
\]
so that $S_{ij}=S(y_i,y_j)$, $T_{ij}=T(y_i,y_j)$ and
$C^{(0)}_{ij}=C^{(0)}(y_i,y_j)$.  Fix $i$, put $a=y_i$ and
$Y_{\hat i}=Y\setminus\{y_i\}$, and consider
\begin{equation}\label{eq:skew-residue-fn}
 \mathcal R^\wedge_i(\theta)
 =-\frac{T(a,\theta)-\sigma S(a,\theta)}
         {S(a,\theta)\,\theta\,(1-\theta)}
  \prod_{p\in Y_{\hat i}}\frac{1-\theta p}{\theta-p},
\end{equation}
the skew counterpart of the function $\mathcal R_i$ of \cref{lem:four-residue};
the differences are the numerator and the factor $1-\theta$ in place of
$1-\theta^2$.  Exactly as there, the residues of
\eqref{eq:skew-residue-fn} at the poles $\theta=y_j$, $j\ne i$, are
$C^{(0)}_{ij}\alpha_j$, while the remaining finite poles are
\[
 \theta=0,\qquad \theta=1,\qquad \theta_0=\frac{a}{2a-1},
\]
the pole at $\theta=-1$ present in \cref{lem:four-residue} being absent here.
The two kernel degenerations
\[
 C^{(0)}(a,0)=1-\sigma a=g(a),
 \qquad
 C^{(0)}(a,1)=-t^2
\]
and the relation $\prod_{p\in Y_{\hat i}}p=\eta/a$, valid on the component
$P=\eta$, give
\[
 \operatorname*{Res}_{0}\mathcal R^\wedge_i=(-1)^n\eta g_i,
 \qquad
 \operatorname*{Res}_{1}\mathcal R^\wedge_i=t^2,
 \qquad
 \operatorname*{Res}_{\theta_0}\mathcal R^\wedge_i=-(-1)^nb_i,
\]
the last two independently of $\eta$.  Since
$\mathcal R^\wedge_i(\theta)=O(\theta^{-2})$, the residue at infinity vanishes,
and summing all residues yields \eqref{eq:skew-row-identity}.

Write $y=(y_1,\ldots,y_n)^T$ and $\one=(1,\ldots,1)^T$.  For column vectors
$a,b$, set $a\wedge b=ab^T-ba^T$; thus $y\wedge\one$ is the skew-symmetric
matrix whose $(i,j)$-entry is $y_i-y_j$.  In this notation,
$C^{(1)}-C^{(0)}=\sigma(y\wedge\one)$.  Together with identity
\eqref{eq:skew-row-identity}, this turns the border created by the recurrence
into a linear combination of the existing Pfaffian borders. For odd $n$ the
rank-two update is invisible because all relevant borders lie in
$\operatorname{span}\{\one,y\}$; for even $n$ one additionally uses
\[
 \sum_j\alpha_j=1-\eta,
 \qquad
 \sum_jg_j\alpha_j=t^2(1-\eta)
\]
to see that the two bordered Pfaffians cancel. Thus the recurrence numerator
vanishes for $P=1$ and for $P=-1$, proving the required divisibility.

Before this cancellation, the recurrence gives
\[
 \deg_{y_n}\bigl((1-P^2)Z_n^{(0)}\bigr)\le n+1,
 \qquad
 \deg_{y_n}\bigl((1-P^2)Z_n^{(1)}\bigr)\le n+2.
\]
After dividing by $1-P^2$ we obtain
\begin{equation}\label{eq:skew-degree-bounds}
 \deg_{y_n}Z_n^{(0)}\le n-1,
 \qquad
 \deg_{y_n}Z_n^{(1)}\le n.
\end{equation}
This is the Izergin--Korepin uniqueness step: the normalized functions are
characterized by polynomiality, the degree bounds \eqref{eq:skew-degree-bounds},
and the $n+1$ values
\[
 y_n=0,\qquad y_n=1,\qquad y_n=y_i^{-1}\quad(1\le i<n).
\]
These values determine $Z_n^{(1)}$ uniquely and give one redundant consistency
check for $Z_n^{(0)}$.  The boundary computations above show that the candidates
\eqref{eq:skew-candidates-even}--\eqref{eq:skew-candidates-odd} satisfy
\eqref{eq:skew-zero}--\eqref{eq:skew-recip} and the same initial values.
Simultaneous induction on $n$ therefore proves \cref{thm:skew-littlewood}.

\begin{remark}\label{rem:skew-vs-charge}
The skew interpolation above and the charge-refined interpolation of
\cref{sec:charge} run in parallel: they use the same variables $y_i$, the same
row-deletion coefficients $c_k$ of \eqref{eq:ck-common}, the same coefficients $\alpha_j$ and $b_i$, and the same interpolation points
$y_n\in\{0,1\}\cup\{y_i^{-1}:i<n\}$.  The symmetric case is more involved only
at the pole-cancellation step: it requires a four-residue rather than a
three-residue identity, and its Pfaffian border is a genuine deformation in $z$
rather than the constant borders $t\one$ and $g$.
\end{remark}

\subsection{Comparison with formulas from \texorpdfstring{\cite{PR}}{[PR]}}
Let
$D_m^\wedge(u)=\prod_{i<j}(1+u_i+u_j)$.
Suppose first that $m$ is even.
Multiplying \eqref{eq:skew-pf-even} by $D_m^\wedge$ gives
\[
 D_m^\wedge\Psi_m(t;u)=\Gamma_m(u)\Pf(A+t^2B),
\]
where
\[
 \Gamma_m(u)=\prod_{i<j}\frac{(u_i+u_j)(1+u_i+u_j)}{u_i-u_j},
\]
\[
 A_{ij}=\frac{u_i-u_j}{(u_i+u_j)(1+u_i+u_j)},
 \qquad
 B_{ij}=\frac{u_i-u_j}{1+u_i+u_j}.
\]
Expanding the Pfaffian according to the vertices carried by $B$-edges, and using
Schur's Pfaffian identity \cite{Schur1911}
\[
 \Pf\left[\frac{z_i-z_j}{z_i+z_j}\right]_{i,j\in I}
 =\prod_{\substack{i<j\\ i,j\in I}}\frac{z_i-z_j}{z_i+z_j}
 \qquad\qquad
 \text{(for $|I|$ even),}
\]
gives
\begin{equation*}
 D_m^\wedge[t^r]\Psi_m
 =
 \sum_{\substack{I\subset[m]\\|I|=r}}
 W^\wedge_{m-r}(u_{\bar I})
 \prod_{\substack{i<j\\i,j\in I}}(u_i+u_j)
 \prod_{\substack{i\in I\\j\in\bar I}}
 \frac{(u_i+u_j)(1+u_i+u_j)}{u_i-u_j}.
 \nonumber
\end{equation*}

Now let $m$ be odd.  Then \cref{thm:skew-littlewood} instead gives the bordered
form
\[
 D_m^\wedge\Psi_m(t;u)
 =\Gamma_m(u)\,
 \Pf\begin{pmatrix}
 0&t\one^T\\
 -t\one&A+t^2B
 \end{pmatrix},
\]
which we expand according to the vertices carried by the border and by the
$B$-edges.  Every matching pairs the auxiliary index with exactly one variable
index, so every term carries an odd power of $t$, in agreement with the parity
$d_1\in2\ZZ_{\ge0}+1$ of the exponents of $\Psi_m$.  Let $I\subset[m]$ consist
of the index paired with the border together with the vertices carried by
$B$-edges; then $|I|=r$ is odd, the border edge and the $(r-1)/2$ $B$-edges
together contribute $t^{r}$, and the complementary indices, paired through $A$,
produce the full-rank factor $W^\wedge_{m-r}(u_{\bar I})$ with $m-r$ even,
exactly as before.  The internal factor carried by $I$ is now the bordered
Pfaffian
\[
 \Pf\begin{pmatrix}
 0&\one^T\\
 -\one&\bigl[B_{ij}\bigr]_{i,j\in I}
 \end{pmatrix}
 =\prod_{\substack{i<j\\i,j\in I}}\frac{u_i-u_j}{1+u_i+u_j},
\]
the limit of Schur's identity, applied to the variables $1+2u_i$, as the
auxiliary variable tends to infinity; against the factors of $\Gamma_m$
supported on $I$ it again produces
$\prod_{i<j,\,i,j\in I}(u_i+u_j)$.  Finally, since the auxiliary index precedes
all variable indices, the shuffle sign of $\{0\}\cup I$ against $\bar I$ equals
that of $I$ against $\bar I$, and the sign bookkeeping is unchanged.  Hence the
subset expansion displayed above holds verbatim for odd $m$ as well, now over
the subsets with $|I|=r$ odd.

In both cases the right-hand side is precisely the skew-symmetric $W$-function
$W^\wedge_{m,r}$ of Definition~5.1 in \cite{PR}.  By Theorem~5.5 there,
$W^\wedge_{m,r}=\csm(\Sigma^\wedge_{m,r})$.  Hence
\[
 [t^r]\Psi_m=\ssm(\Sigma^\wedge_{m,r}).
\]

\subsection{From finite rank to the stable classes}\label{ss:skew-stabilization}
Combining the last displayed identity with \cref{thm:skew-littlewood} and the
definition of $\Psi_m$, we obtain the finite-rank expansion
\begin{equation}\label{eq:skew-finite-r}
 \ssm(\Sigma^\wedge_{m,r})
 =\sum_{\substack{\ell(\lambda)\le m\\ d_\epsilon(\lambda)=r}}
 \st_\lambda(u_1,\ldots,u_m),
 \qquad
 \epsilon\equiv m\equiv r\pmod2 .
\end{equation}
(Recall that $\Sigma^\wedge_{m,r}$ is defined only for $r\equiv m\pmod2$; both
sides of \eqref{eq:skew-finite-r} vanish when $r>m$, the left one because the
orbit is then empty, the right one because $d_\epsilon(\lambda)\le m$ whenever
$\ell(\lambda)\le m$.)  The coefficients on the right-hand side do not depend on
$m$, and the length restriction $\ell(\lambda)\le m$ is automatic, since
$\st_\lambda(u_1,\ldots,u_m)=0$ otherwise by \eqref{eq:rho-st}.  This is what
makes the passage to the stable limit possible.

\begin{proposition}[Skew stabilization]\label{prop:skew-stab}
Fix $r\ge0$, let $\epsilon\in\{0,1\}$ satisfy $r\equiv\epsilon\pmod2$, and put
\[
 \Theta^\wedge_r=\sum_{d_\epsilon(\lambda)=r}\st_\lambda\ \in\ \hat\Lambda ,
\]
a well-defined element of $\hat\Lambda$ by \cref{ss:stable-limits}.  Then
\[
 \rho_m\bigl(\Theta^\wedge_r\bigr)=\ssm(\Sigma^\wedge_{m,r})
 \qquad\text{for every } m\equiv r\!\!\pmod 2 .
\]
In particular $\rho^{m'}_m\bigl(\ssm(\Sigma^\wedge_{m',r})\bigr)
=\ssm(\Sigma^\wedge_{m,r})$ for all such $m\le m'$, and $\Theta^\wedge_r$ is the
stable limit of this family.
\end{proposition}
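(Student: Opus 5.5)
The proof combines the finite-rank expansion \eqref{eq:skew-finite-r} with the specialization formalism of \cref{ss:stable-limits}. It is essentially bookkeeping, since all the geometric and combinatorial content has already been packaged into \eqref{eq:skew-finite-r}.

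\emph{Step 1: compute $\rho_m(\Theta^\wedge_r)$.} The formal sum $\Theta^\wedge_r=\sum_{d_\epsilon(\lambda)=r}\st_\lambda$ has the form $\sum_\lambda c_\lambda\st_\lambda$ with $c_\lambda=\one_{d_\epsilon(\lambda)=r}$. By \eqref{eq:rho-of-sum},
\[
 \rho_m\bigl(\Theta^\wedge_r\bigr)=\sum_{\substack{\ell(\lambda)\le m\\ d_\epsilon(\lambda)=r}}\st_\lambda(u_1,\ldots,u_m).
\]

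\emph{Step 2: identify this with the finite-rank class.} For $m\equiv r\pmod 2$ we have $\epsilon\equiv m\equiv r$, so the hypotheses of \eqref{eq:skew-finite-r} hold. That identity says the right-hand side above is $\ssm(\Sigma^\wedge_{m,r})$. The degenerate range $r>m$ needs no separate treatment: both sides vanish there, as already remarked after \eqref{eq:skew-finite-r}, because $d_\epsilon(\lambda)\le m$ whenever $\ell(\lambda)\le m$. This proves $\rho_m(\Theta^\wedge_r)=\ssm(\Sigma^\wedge_{m,r})$.

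\emph{Step 3: compatibility and stable limit.} The specializations are compatible, $\rho^{m'}_m\circ\rho_{m'}=\rho_m$, since both sides set $u_{m+1},u_{m+2},\ldots$ to $0$. Applying $\rho^{m'}_m$ to the identity of Step 2 for $m'$ therefore gives
\[
 \rho^{m'}_m\bigl(\ssm(\Sigma^\wedge_{m',r})\bigr)=\rho_m(\Theta^\wedge_r)=\ssm(\Sigma^\wedge_{m,r})
\]
for all $m\le m'$ of the same parity as $r$. The set $S=\{m\ge0: m\equiv r\pmod2\}$ is infinite, so \cref{ss:stable-limits} produces a unique stable limit of the family. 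Since $\Theta^\wedge_r$ satisfies the defining property $\rho_m(\Theta^\wedge_r)=\ssm(\Sigma^\wedge_{m,r})$ for all $m\in S$, it is that limit.

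The only point needing care, and the closest thing to an obstacle, is the uniqueness claim in Step 3, i.e.\ injectivity of $\hat\Lambda\to\varprojlim_{m\in S}\hat\Lambda_m$. This follows because $\rho_m$ is an isomorphism in each degree $N\le m$, and $S$ contains arbitrarily large $m$.
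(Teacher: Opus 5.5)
Your proposal is correct and follows the paper's own proof: apply \eqref{eq:rho-of-sum} to $\Theta^\wedge_r$ to obtain the right-hand side of \eqref{eq:skew-finite-r}, then use the last paragraph of \cref{ss:stable-limits} for compatibility and uniqueness of the stable limit. Your Step 3 and your handling of the range $r>m$ only make explicit the bookkeeping that the paper leaves implicit.
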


\begin{proof}
Apply \eqref{eq:rho-of-sum} to $\Theta^\wedge_r$: the result is the right-hand
side of \eqref{eq:skew-finite-r}.  Compatibility and uniqueness of the limit are
the last paragraph of \cref{ss:stable-limits}.
\end{proof}

\begin{definition}\label{def:stable-skew}
For $r\ge0$ we define the stable skew-symmetric corank class by
\[
 \ssm(\Sigma^\wedge_{\infty,r}):=\Theta^\wedge_r\in\hat\Lambda .
\]
\end{definition}

This is the precise meaning of the phrase ``stabilization along dimensions of the
same parity as $r$'' used in \cref{thm:intro-skew}, and with it
\eqref{eq:skew-main-intro} is a restatement of \cref{prop:skew-stab}.  Thus
\cref{thm:intro-skew} is proved.

\begin{remark}
The compatibility in \cref{prop:skew-stab} is not a formal consequence of a
geometric inclusion: the corank-$r$ orbits in $\Lambda^2\CC^m$ and in
$\Lambda^2\CC^{m'}$ are not related by a $\GL_m$-equivariant inclusion of pairs,
and $\rho^{m'}_m$ is merely the restriction of equivariant coefficients along
$\GL_m\hookrightarrow\GL_{m'}$.  It is a genuine property of the classes, and
here we read it off from \eqref{eq:skew-finite-r}.
\end{remark}

\begin{example}
The condition $d_0(\lambda)=0$ forces every even dimer to have occupancy $00$ or
$11$, equivalently
\[
 \lambda=(2\mu_1,2\mu_1,2\mu_2,2\mu_2,\ldots).
\]
At the other extreme, the first partition with $d_\epsilon=r$ is
$\delta_{r-1}$, so the lowest-degree term of \eqref{eq:skew-main-intro} is the
classical fundamental class $s_{r-1,r-2,\ldots,1}$.
\end{example}

\begin{example}[Initial stable skew expansions]\label{ex:skew-expansions}
Here are the first few expansions according to \cref{thm:intro-skew}. We write
$\mathcal O_{\ge d}$ for terms $\st_\lambda$ with $|\lambda|\ge d$.
\begin{align*}
\ssm(\Sigma^\wedge_{\infty,0})
={}&
\st_{\varnothing}
+\st_{22}
+(\st_{44}+\st_{2222})
+(\st_{66}+\st_{4422}+\st_{222222})
+\mathcal O_{\ge16},
\\[2mm]
\ssm(\Sigma^\wedge_{\infty,1})
={}&
\st_{\varnothing}
+\st_{1}
+(\st_{2}+\st_{11})
+(\st_{3}+\st_{111})
+(\st_{4}+\st_{1111})\\
&\quad
+(\st_{5}+\st_{11111})
+(\st_{6}+\st_{33}
+\st_{222}+\st_{111111})
+\mathcal O_{\ge7},
\\[2mm]
\ssm(\Sigma^\wedge_{\infty,2})
={}&
\st_{1}
+(\st_{2}+\st_{11})
+(\st_{3}+\st_{21}+\st_{111})
+(\st_{4}+\st_{31}
+\st_{211}+\st_{1111})
+\mathcal O_{\ge5},
\\[2mm]
\ssm(\Sigma^\wedge_{\infty,3})
={}&
\st_{21}
+(\st_{31}+\st_{22}+\st_{211})
+(\st_{41}+\st_{32}+\st_{311}
+\st_{221}+\st_{2111})
+\mathcal O_{\ge6},
\\[2mm]
\ssm(\Sigma^\wedge_{\infty,4})
={}&
\st_{321}
+(\st_{421}+\st_{331}
+\st_{322}+\st_{3211})\\
&\quad
+(\st_{521}+\st_{431}+\st_{422}
+\st_{4211}
+\st_{332}+\st_{3311}
+\st_{3221}+\st_{32111})
+\mathcal O_{\ge9}.
\end{align*}
The $0/1$ coefficients and transpose symmetry are visible already in these small
terms.
\end{example}

\section{Symmetric matrices: 2-cores and Chebyshev polynomials}\label{sec:symmetric-statement}

\subsection{2-cores}
Let $\GL_m$ act on $S^2\CC^m$, and let $\Sigma^S_{m,r}$ denote the orbit of
symmetric forms of corank~$r$.
Definition~5.3 of \cite{PR} gives explicit localization formulas $W^S_{m,r}$, and Theorem~5.5 there identifies them with the CSM
classes of the symmetric orbits.  Hence
\begin{equation*}
 \ssm(\Sigma^S_{m,r})
 =\frac{W^S_{m,r}}{D_m^S(u)},
 \qquad
 D_m^S(u)=\prod_{i\le j}(1+u_i+u_j).
\end{equation*}
The first stable $\st$-expansions in \cite{PR} display remarkable repeated
coefficients.  The following theorem explains them.

Recall that the $2$-core of a partition is obtained by repeatedly removing rim
dominoes.  Every $2$-core is a staircase $\delta_k$.  We use the notation
\eqref{eq:kappa-def}.

\begin{theorem}[Finite-rank $2$-core/Chebyshev formula]\label{thm:sym-finite}
For every $m$,
\begin{equation}\label{eq:sym-finite}
 \sum_{r=0}^m t^r\ssm(\Sigma^S_{m,r})(u_1,\ldots,u_m)
 =\sum_{\ell(\lambda)\le m}
 V_{\kappa(\lambda)}(t)\st_\lambda(u_1,\ldots,u_m).
\end{equation}
\end{theorem}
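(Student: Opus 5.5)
The plan mirrors the skew argument of \cref{sec:skew}, with the dimer statistic replaced by a two-runner charge. For a beta set $B=\mathcal B_m(\lambda)$, let $n_0(B)$ and $n_1(B)$ be the numbers of beads on runners $0$ and $1$ of the two-runner abacus. The charge $c(B)=n_0(B)-n_1(B)$ determines the $2$-core. After normalizing by $m$, $|c|$ is essentially $\kappa(\lambda)$: removing a rim domino slides a bead by $2$ along its runner, and the core is read off from the bead imbalance.

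I would prove a charge-refined Littlewood identity for
\[
 \Phi_m(z;u)=\sum_{\ell(\lambda)\le m} z^{c(\mathcal B_m(\lambda))}\st_\lambda(u).
\]
Using \eqref{eq:robbins-tilde}, I would rewrite this as a sum of modified Robbins polynomials, normalized by $\prod(1-y_i)\prod_{i\le j}(1-y_iy_j)$ or a similar factor. I would then run the Izergin--Korepin interpolation exactly as for \cref{thm:skew-littlewood}.

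The interpolation steps are as follows.
\begin{enumerate}
\item Derive the first-beta-number deletion recurrence with the same coefficients $c_k(Y)$ of \eqref{eq:ck-common}. Removing $b_1$ and translating by $-b_1-1$ swaps the runners, so $z\mapsto z^{-1}$ and a factor $z^{\pm1}$ appears. This gives a coupled two-component system in $z$ and $z^{-1}$, with geometric factor $(1-P^2)^{-1}$.
\item Record the boundary values at $y_n=0$, $y_n=1$ and at reciprocal pairs $(a,a^{-1})$.
\item Propose a Pfaffian candidate with a $z$-deformed border.
\item Check that the candidate meets the boundary data.
\item Prove that the numerator of the recurrence is divisible by $1-P^2$, using a four-residue identity. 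This is the analogue of \eqref{eq:skew-row-identity}, now with an extra pole at $\theta=-1$ coming from $1-\theta^2$.
\item Conclude by the degree bound and uniqueness.
\end{enumerate}

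Next comes symmetrization. Averaging the two charge states, i.e. the components at $z$ and $z^{-1}$ together with the parity of $m$, and substituting $z+z^{-1}=2t$ or its half-integer analogue, I would show that
\[
 z^{c}\ \text{summed over the two states} \;\longmapsto\; V_{\kappa}(t).
\]
This uses the standard presentation $V_k(\cos\phi)=\cos((k+\tfrac12)\phi)/\cos(\tfrac\phi2)$, which is a signed symmetric Laurent polynomial in $z=e^{i\phi}$ of width $2k+1$. Its recursion matches \eqref{eq:cheb-rec-intro}. This step turns the charge-refined Pfaffian into a Pfaffian in $t$, which equals $\sum_{\ell(\lambda)\le m} V_{\kappa(\lambda)}(t)\st_\lambda(u)$.

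Finally, I would compare with \cite{PR}. Multiply by $D^S_m(u)$ and expand the resulting Pfaffian according to the vertices carried by the $t$-dependent part of the kernel. On each such block, Schur's Pfaffian identity (and its bordered limit for odd size) produces the internal product. Collecting powers of $t$ should give
\[
 \sum_r t^r\, W^S_{m,r}
\]
with $W^S_{m,r}$ the functions of \cite[Def.~5.3]{PR}. These are CSM classes by \cite[Thm.~5.5]{PR}, and dividing by $D^S_m$ proves \eqref{eq:sym-finite}.

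The main obstacle is the pole-cancellation step. Divisibility by $1-P^2$ requires the four-residue identity, with poles at $0$, $1$, $-1$ and $\theta_0$. It also requires checking that the $z$-dependent border recombines with the existing borders on both components $P=\pm1$. A second delicate point is the sign bookkeeping in the \cite{PR} comparison, since the Chebyshev polynomials $V_k$ have mixed signs. I would first verify the identity numerically for $m\le3$ to fix normalizations before attempting the general residue argument.
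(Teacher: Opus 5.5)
Your proposal is correct and follows the paper's route. The shared steps are: the charge $\Delta_m=n_0-n_1$; the first-beta-number recurrence with the two states $z^{\pm1}$; Izergin--Korepin interpolation with the four-residue pole cancellation (\cref{thm:charge-pf}); the symmetrization $\mathscr S_z f=(f(z)+zf(z^{-1}))/(1+z)$, which is exactly your half-angle presentation of $V_k$; and the Schur-Pfaffian subset expansion against \cite{PR} (\cref{prop:PR-pf}). The one mechanism you leave implicit is why symmetrizing a Pfaffian yields a Pfaffian: the bulk kernel $C^{(q)}$ and the prefactor $R_m$ depend on $z$ only through $q=z+z^{-1}$, so by multilinearity $\mathscr S_z$ acts only on the border, sending $z+qu_i\mapsto 1+qu_i$ for odd $m$ and $(1+z^2)u_i\mapsto qu_i$ for even $m$.
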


The proof of this theorem will occupy Sections~\ref{sec:charge} and \ref{sec:pr-symmetric}.
The stable form of this theorem is \cref{thm:intro-sym}.  The passage from one to
the other is the same as in the skew case, purely formal.

\begin{definition}\label{def:stable-sym}
For $r\ge0$ put
\[
 \Theta^S_r=\sum_\lambda\bigl([t^r]V_{\kappa(\lambda)}(t)\bigr)\,\st_\lambda
 \ \in\ \hat\Lambda ,
\]
a well-defined element of $\hat\Lambda$ by \cref{ss:stable-limits}, and set
$\ssm(\Sigma^S_{\infty,r}):=\Theta^S_r$.
\end{definition}

Indeed, extracting the coefficient of $t^r$ from \eqref{eq:sym-finite} and
applying \eqref{eq:rho-of-sum} gives
\[
 \rho_m\bigl(\Theta^S_r\bigr)=\ssm(\Sigma^S_{m,r})
 \qquad\text{for every }m\ge0,
\]
with the convention $\Sigma^S_{m,r}=\varnothing$ for $r>m$; the case $r>m$ is
consistent because $\deg V_{\kappa(\lambda)}=\kappa(\lambda)\le\ell(\lambda)\le m$
for the partitions occurring in \eqref{eq:sym-finite}.  Hence the classes
$\ssm(\Sigma^S_{m,r})$ are compatible under the maps $\rho^{m'}_m$ and
$\Theta^S_r$ is their stable limit, so that \cref{thm:intro-sym} is exactly
\eqref{eq:sym-finite} transported to $\hat\Lambda$.  In contrast with the skew
case no parity restriction is needed here: the orbits $\Sigma^S_{m,r}$ exist for
all $0\le r\le m$, and the coefficient $[t^r]V_{\kappa(\lambda)}(t)$ does not
depend on $m$.

\subsection{Core blocks and explicit coefficients}
Define
\begin{equation*}
 \mathcal C_k
 =\sum_{\core(\lambda)=\delta_k}\st_\lambda.
\end{equation*}
Then \cref{thm:intro-sym} is the universal Chebyshev transform
\begin{equation}\label{eq:core-transform}
 \sum_{r\ge0}t^r\ssm(\Sigma^S_{\infty,r})
 =\sum_{k\ge0}V_k(t)\mathcal C_k.
\end{equation}
Thus the entire $2$-quotient of $\lambda$ is invisible to the stable SSM
coefficient.
Writing
\[
 a_{k,r}=[t^r]V_k(t),
\]
one obtains the closed formula
\begin{equation*}
 a_{k,r}=
 \begin{cases}
 2^r(-1)^{(k-r)/2}
 \displaystyle\binom{(k+r)/2}{(k-r)/2},
 &k-r\text{ even},\\[4mm]
 2^r(-1)^{(k-r+1)/2}
 \displaystyle\binom{(k+r-1)/2}{(k-r-1)/2},
 &k-r\text{ odd},
 \end{cases}
\end{equation*}
with $a_{k,r}=0$ for $r>k$.
For example,
\[
\begin{array}{c|rrrrr}
 &t^0&t^1&t^2&t^3&t^4\\ \hline
V_0&1&0&0&0&0\\
V_1&-1&2&0&0&0\\
V_2&-1&-2&4&0&0\\
V_3&1&-4&-4&8&0\\
V_4&1&4&-12&-8&16
\end{array}
\]
In particular, $a_{r,r}=2^r$.  Since the smallest partition with $2$-core
$\delta_r$ is $\delta_r$ itself, the lowest-degree term of the corank-$r$ class is
\[
 2^rs_{r,r-1,\ldots,1},
\]
the classical fundamental class of the symmetric corank-$r$ locus \cite{JLP,HT,FNR,AF}.

\begin{example}[Initial stable symmetric expansions]\label{ex:sym-expansions}
The first terms of the Chebyshev transform \eqref{eq:core-transform} are as follows.
\begin{align*}
\ssm(\Sigma^S_{\infty,0})
={}&
\st_{\varnothing}
-\st_{1}
+(\st_{2}+\st_{11})
-(\st_{3}+\st_{21}+\st_{111})\\
&\quad
+(\st_{4}+\st_{31}+\st_{22}
+\st_{211}+\st_{1111})
+\mathcal O_{\ge 5},
\\[2mm]
\ssm(\Sigma^S_{\infty,1})
={}&
2\st_{1}
+2(\st_{3}-\st_{21}+\st_{111})\\
&\quad
+2(\st_{5}-\st_{41}+\st_{32}
+\st_{311}+\st_{221}
-\st_{2111}+\st_{11111})
+\mathcal O_{\ge 6},
\\[2mm]
\ssm(\Sigma^S_{\infty,2})
={}&
4\st_{21}
+4(\st_{41}+\st_{2111})
-4\st_{321}\\
&\quad
+4(\st_{61}+\st_{43}
+\st_{4111}+\st_{2221}
+\st_{211111})
+\mathcal O_{\ge 8},
\\[2mm]
\ssm(\Sigma^S_{\infty,3})
={}&
8\st_{321}
+8(\st_{521}+\st_{32111})\\
&\quad
+8(\st_{721}+\st_{541}
+\st_{52111}+\st_{32221}
+\st_{3211111}
-\st_{4321})
+\mathcal O_{\ge 12},
\\[2mm]
\ssm(\Sigma^S_{\infty,4})
={}&
16\st_{4321}
+16(\st_{6321}+\st_{432111})\\
&\quad
+16(\st_{8321}+\st_{6521}
+\st_{632111}+\st_{432221}
+\st_{43211111})
+\mathcal O_{\ge 15}.
\end{align*}
These examples display both features explained by the theorem: coefficients are
constant on $2$-core blocks, and the diagonal coefficient is $2^r$.
\end{example}

\begin{corollary}[Transpose symmetry]\label{cor:sym-transpose}
The coefficient of $\st_\lambda$ in every symmetric stable SSM class is
equal to the coefficient of $\st_{\lambda^T}$.
\end{corollary}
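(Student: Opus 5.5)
By \cref{thm:intro-sym}, in the stable form recorded in \cref{def:stable-sym}, the coefficient of $\st_\lambda$ in $\ssm(\Sigma^S_{\infty,r})$ is $[t^r]V_{\kappa(\lambda)}(t)$. This depends on $\lambda$ only through $\kappa(\lambda)$, the size parameter of its $2$-core. The corollary therefore reduces to the combinatorial identity
\[
 \kappa(\lambda^T)=\kappa(\lambda),\qquad\text{equivalently}\qquad \core(\lambda^T)=\core(\lambda)^T .
\]
Once this identity is in hand, nothing geometric remains to be checked.

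I would prove $\core(\lambda^T)=\core(\lambda)^T$ in two steps. First, conjugation sends rim dominoes to rim dominoes: a horizontal domino becomes a vertical one and vice versa. It also preserves the property that removing a domino leaves a partition, because $(\lambda\setminus D)^T=\lambda^T\setminus D^T$. Hence any sequence of domino removals taking $\lambda$ to its core transposes to a sequence taking $\lambda^T$ to $\core(\lambda)^T$. Second, the $2$-core is well defined, i.e.\ independent of the removal order; this is standard and also implicit in the paper's definition. Combining the two steps gives $\core(\lambda^T)=\core(\lambda)^T$. Since staircases are self-conjugate, $\delta_k^T=\delta_k$, and so $\kappa(\lambda^T)=\kappa(\lambda)$.

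Alternatively, the Maya-string description already used in the paper gives a direct check. The paper states that $w_j(\lambda^T)=1-w_{-j-1}(\lambda)$. Removing a rim domino moves a bead from $j$ to $j-2$, so it stays on the same parity runner. The core is therefore determined by the bead-count imbalance between the even and odd runners, relative to the vacuum. The particle-hole reflection $j\mapsto -j-1$ swaps the two parities and complements the occupancies. One then checks that the imbalance changes sign but keeps its absolute value, and $\kappa$ is determined by this absolute value.

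The only delicate point is the bookkeeping in this alternative: one must determine exactly how the charge imbalance transforms, and match $\kappa$ to $|{\rm imbalance}|$ with the correct sign conventions. The first route avoids this entirely, so I would present that one. Finally, I would note that the corollary is consistent with \cref{prop:st-basic}(\ref{item:st-transpose}): it says that each class $\ssm(\Sigma^S_{\infty,r})$ is $\omega$-invariant.
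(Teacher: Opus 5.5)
Your main argument is correct and is the paper's own: the coefficient is $[t^r]V_{\kappa(\lambda)}(t)$, taking the $2$-core commutes with conjugation, and $\delta_k^T=\delta_k$; you simply spell out why rim dominoes conjugate to rim dominoes. The Maya-string aside is wrong as written, however. Under $j\mapsto -j-1$ the parity swap and the particle-hole complement each negate the runner imbalance, so the imbalance is preserved rather than negated. Also, $\kappa$ is not a function of its absolute value: $\delta_1$ and $\delta_2$ have opposite imbalances. The route does work once corrected, since $\kappa$ is a function of the signed imbalance.
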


\begin{proof}
The $2$-core of $\lambda^T$ is the transpose of the $2$-core of $\lambda$, and
staircases are self-transpose.
\end{proof}

Together with the skew-symmetric transpose symmetry \eqref{eq:dimer-transpose},
this proves the transpose-invariance part of \cite[Conj.~6.2]{PR}. The sign assertion of \cite[Conj.~6.2]{PR} concerns the same expansions:
for {\em even} $r$ the coefficients should alternate with the degree.  This
also follows.

\begin{corollary}[Sign alternation]\label{cor:sym-signs}
Let $r$ be even.  Then every nonzero coefficient in the expansion of
$\ssm(\Sigma^S_{\infty,r})$ satisfies
\[
 [\st_\lambda]\ssm(\Sigma^S_{\infty,r})
 \in(-1)^{|\lambda|-|\delta_r|}\ZZ_{>0}.
\]
This proves the sign assertion of \cite[Conj.~6.2]{PR}.
\end{corollary}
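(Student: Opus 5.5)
The plan is to combine \cref{thm:intro-sym} with the closed formula for $a_{k,r}=[t^r]V_k(t)$ and a parity count of $|\lambda|$. By \cref{thm:intro-sym} the coefficient of $\st_\lambda$ in $\ssm(\Sigma^S_{\infty,r})$ is $a_{\kappa(\lambda),r}$. So it suffices to show two things for even $r$ and every $k\ge r$: the number $a_{k,r}$ is nonzero, and its sign is $(-1)^{|\lambda|-|\delta_r|}$ whenever $\kappa(\lambda)=k$. For $k<r$ we have $a_{k,r}=0$, so those $\lambda$ contribute nothing.

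\emph{Step 1: reduce $|\lambda|$ to the core.} Removing a rim domino lowers $|\lambda|$ by $2$. Hence $|\lambda|\equiv|\delta_{\kappa(\lambda)}|=\binom{k+1}{2}\pmod 2$, and the target sign becomes
\[
(-1)^{\binom{k+1}2-\binom{r+1}2}=(-1)^{(k-r)(k+r+1)/2}.
\]

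\emph{Step 2: compare with the closed formula for $a_{k,r}$.} I will recheck that formula quickly from the recurrence \eqref{eq:cheb-rec-intro}, for instance by induction on $k$ using Pascal's rule. In both parity cases the binomial coefficients there are positive for $k\ge r$, so $a_{k,r}\neq0$. This gives every $\lambda$ with $\kappa(\lambda)\ge r$ a nonzero coefficient, which the statement does not require but is harmless.

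\emph{Step 3: check the signs case by case.}
\begin{itemize}
\item If $k-r=2j$, the formula gives sign $(-1)^j$. Since $r$ is even, $k$ is even and $k+r+1$ is odd, so $(k-r)(k+r+1)/2=j(k+r+1)\equiv j$.
\item If $k-r=2j+1$, the formula gives sign $(-1)^{j+1}$. Here $k+r+1=2(j+1+r)$, so $(k-r)(k+r+1)/2=(2j+1)(j+1+r)\equiv j+1+r\equiv j+1$, again using that $r$ is even.
\end{itemize}
In both cases the signs agree, which proves the corollary.

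The only real point of care is the odd case, where evenness of $r$ is genuinely used. For odd $r$ the same computation gives a sign that depends on $r$ in the wrong way, which is consistent with the statement being restricted to even $r$. Apart from that, everything is elementary once the closed formula for $a_{k,r}$ is trusted, so the main obstacle is just verifying that formula against the recurrence.
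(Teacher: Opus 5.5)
Your proof is correct and follows the paper's own argument. Both read off the coefficient $a_{\kappa(\lambda),r}$ from \cref{thm:intro-sym}, reduce $|\lambda|$ mod $2$ to $|\delta_k|$ by domino removal, and match the sign $(-1)^{\lceil (k-r)/2\rceil}$ of the closed formula against $(-1)^{|\delta_k|-|\delta_r|}$; the only difference is that the paper does the parity check in one line via $\lceil (k-r)/2\rceil=\lceil k/2\rceil-r/2$ and $\lceil k/2\rceil\equiv k(k+1)/2 \pmod 2$, whereas you split into the two parities of $k-r$.
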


\begin{proof}
\cref{thm:intro-sym} determines every coefficient,
$[\st_\lambda]\ssm(\Sigma^S_{\infty,r})=a_{\kappa(\lambda),r}$; by the closed
formula for $a_{k,r}$ its sign is $(-1)^{\lceil(k-r)/2\rceil}$ for $r\le k$,
and the coefficient vanishes for $r>k$.  For
even $r$ one has $\lceil(k-r)/2\rceil=\lceil k/2\rceil-r/2$, and
\[
 \Bigl\lceil \tfrac k2\Bigr\rceil\equiv\tfrac{k(k+1)}2,
 \qquad
\tfrac r2\equiv\tfrac{r(r+1)}2
 \pmod 2 ,
\]
so that this sign equals $(-1)^{|\delta_k|-|\delta_r|}=(-1)^{|\lambda|-|\delta_r|}$
for $\core(\lambda)=\delta_k$.  This is exactly the asserted alternation.
\end{proof}

\begin{remark}\label{rem:sym-signs-odd}
For odd $r$ no such rule can hold: the degree-$3$ part of
$\ssm(\Sigma^S_{\infty,1})$ in \cref{ex:sym-expansions} is
$2(\st_3-\st_{21}+\st_{111})$, whose coefficients have different signs within the same degree.
No positivity statement holds in the symmetric case.  In the language of
\cref{sec:prob-consequences}, this is precisely the difference between an {\em event}
and a {\em signed observable}: skew corank is an event, whereas symmetric corank is a
Chebyshev moment of the random $2$-core, and the Chebyshev polynomials $V_k$ are
not positive.
\end{remark}

\begin{corollary}[Two specializations]\label{cor:t0t1}
One has
\[
 \sum_r\ssm(\Sigma^S_{\infty,r})=1
\]
and
\[
 \ssm(\Sigma^S_{\infty,0})
 =\sum_\lambda(-1)^{|\lambda|}\st_\lambda.
\]
\end{corollary}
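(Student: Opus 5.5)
Both identities follow by specializing the generating function of \cref{thm:intro-sym} at $t=1$ and at $t=0$, so the plan is to evaluate $V_k(1)$ and $V_k(0)$ and then relate $(-1)^{|\lambda|}$ to the $2$-core.

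For the first identity, set $t=1$ in
$\sum_r t^r\ssm(\Sigma^S_{\infty,r})=\sum_\lambda V_{\kappa(\lambda)}(t)\st_\lambda$.
The specialization is legitimate coefficientwise in $\hat\Lambda$. Indeed, in each fixed degree only finitely many $\lambda$ contribute, and for each such $\lambda$ only $r\le\kappa(\lambda)$ gives a nonzero coefficient. From \eqref{eq:cheb-rec-intro} we have $V_0(1)=1$, $V_1(1)=1$ and $V_k(1)=2V_{k-1}(1)-V_{k-2}(1)$, so by induction $V_k(1)=1$ for all $k$. The right-hand side therefore becomes $\sum_\lambda\st_\lambda$, which equals $1$ by \eqref{eq:sum-st-section2}.

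For the second identity, set $t=0$ and extract the $t^0$ coefficient. This gives $\ssm(\Sigma^S_{\infty,0})=\sum_\lambda V_{\kappa(\lambda)}(0)\st_\lambda$. The recurrence at $t=0$ reads $V_k(0)=-V_{k-2}(0)$, with $V_0(0)=1$ and $V_1(0)=-1$. Hence the sequence $V_k(0)$ for $k=0,1,2,3,\ldots$ is $1,-1,-1,1,1,-1,-1,\ldots$, with period $4$. Compare this with $(-1)^{k(k+1)/2}=(-1)^{|\delta_k|}$. For $k \bmod 4 = 0,1,2,3$ the exponent $k(k+1)/2$ is $\equiv 0,1,1,0 \pmod 2$, so the two sequences agree and $V_k(0)=(-1)^{|\delta_k|}$.

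It remains to show that $(-1)^{|\lambda|}=(-1)^{|\core(\lambda)|}$. This holds because the $2$-core is obtained by removing rim dominoes, and each removal lowers $|\lambda|$ by $2$. Combining, $V_{\kappa(\lambda)}(0)=(-1)^{|\delta_{\kappa(\lambda)}|}=(-1)^{|\lambda|}$, which gives the second formula. The only step needing any care is this parity identification of $V_k(0)$ with $(-1)^{|\delta_k|}$. It is also the same parity fact already used in the proof of \cref{cor:sym-signs}, so no real obstacle is expected.
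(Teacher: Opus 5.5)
Your proposal is correct and follows the paper's proof: you specialize \cref{thm:intro-sym} at $t=1$ using $V_k(1)=1$ and $\sum_\lambda\st_\lambda=1$, and at $t=0$ using $V_k(0)=(-1)^{k(k+1)/2}$ together with $|\lambda|\equiv|\delta_{\kappa(\lambda)}|\pmod 2$. Your period-$4$ verification and the degreewise-finiteness justification of the $t=1$ specialization are correct details that the paper leaves implicit.
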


\begin{proof}
The first identity follows from $V_k(1)=1$.  For the second, note that
$V_k(0)=(-1)^{k(k+1)/2}$ and
$|\lambda|\equiv|\delta_k|=k(k+1)/2\pmod2$ whenever $\core(\lambda)=\delta_k$.
\end{proof}

\section{The two-runner charge and a refined Littlewood identity}\label{sec:charge}

The purpose of this section is to prove the symmetric-function identity underlying
\cref{thm:sym-finite}.  We first replace the $2$-core by an equivalent two-runner
charge $\Delta_m$ and show that the Chebyshev coefficient is obtained from the
charge monomial $z^{\Delta_m}$ by the elementary symmetrization
\eqref{eq:charge-cheb}--\eqref{eq:S-A}.  We then evaluate the resulting
charge-refined sum $\mathcal A_m(z;u)$ by the Pfaffian formula of
\cref{thm:charge-pf}, using a modified Robbins recurrence and interpolation.
These two outputs are the only ingredients from this section used in
\cref{sec:pr-symmetric}, where the symmetrized charge Pfaffian is identified with
the \cite{PR} corank generating series and the proof of
\cref{thm:sym-finite} is completed.

The $2$-core itself is not the most convenient statistic
for the modified Robbins recurrence.  The natural statistic is a linear charge on
the two-runner abacus.

\subsection{The parity charge}
For $\ell(\lambda)\le m$, let
\begin{equation}\label{eq:finite-beta-sym}
 \mathcal B_m(\lambda)=\{\lambda_i+m-i:1\le i\le m\}
 =
 (B_\lambda+m)\cap \ZZ_{\geq 0},
\end{equation}
be the $m$-beta set of $\lambda$.  Its elements are the bead positions on the
two-runner abacus, with even positions on runner $0$ and odd positions on runner
$1$.  Equivalently, the second equality in \eqref{eq:finite-beta-sym} says that
the $m$-beta set is obtained by translating the Maya set by $m$ and retaining
its nonnegative elements.  The case $m=6$ is illustrated in
\cref{fig:Maya_dimer}.
Define
\begin{equation}\label{eq:Delta}
 \Delta_m(\lambda)
 =\sum_{b\in \mathcal B_m(\lambda)}(-1)^b
 =\#\{b\in\mathcal B_m(\lambda):b\text{ even}\}
  -\#\{b\in\mathcal B_m(\lambda):b\text{ odd}\}.
\end{equation}
Removing a domino from the Young diagram moves one bead two positions along the
same runner, so $\Delta_m$ is invariant under domino removal.

\begin{lemma}[Charge and the $2$-core]\label{lem:charge-core}
If $\core(\lambda)=\delta_k$, then
\begin{equation*}
 \Delta_m(\lambda)=
 \begin{cases}
 -k,&m\equiv k\pmod2,\\
 k+1,&m\not\equiv k\pmod2.
 \end{cases}
\end{equation*}
\end{lemma}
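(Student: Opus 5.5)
The plan is to reduce to the case where $\lambda$ is itself the staircase $\delta_k$ and then count beads directly. The text just before the lemma already observes that removing a domino moves one bead two positions along its runner, so the parity of that bead is unchanged and $\Delta_m$ is invariant. Iterating until the $2$-core is reached gives $\Delta_m(\lambda)=\Delta_m(\delta_k)$, provided the $m$-beta set stays defined. This holds because domino removal does not increase the length, so $\ell(\delta_k)=k\le\ell(\lambda)\le m$ and $\mathcal B_m(\delta_k)$ makes sense for the same $m$.

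Next I compute $\mathcal B_m(\delta_k)$. For $1\le i\le k$ the parts are $\delta_{k,i}=k+1-i$, so the beads are
\[
 b_i=k+1-i+m-i=m+k+1-2i,
\]
giving $m+k-1,\ m+k-3,\ \ldots,\ m-k+1$. For $k<i\le m$ the part is $0$ and the bead is $m-i$, giving the full initial segment $\{0,1,\ldots,m-k-1\}$. So the beta set is an interval $[0,m-k-1]$ followed by $k$ beads of common parity $m+k+1 \pmod 2$, spaced by $2$.

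Then I evaluate the sum $\sum(-1)^b$ in two pieces.

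The interval $[0,N-1]$ with $N=m-k$ contributes $1$ if $N$ is odd and $0$ if $N$ is even.

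The $k$ equal-parity beads contribute $k(-1)^{m+k+1}$.

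If $m\equiv k\pmod 2$, then $N$ is even and the beads are odd, so the total is $-k$. If $m\not\equiv k$, then $N$ is odd and the beads are even, so the total is $1+k$. This matches the statement.

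The only step needing care is the edge case $m=k$, where the interval is empty. The formula still gives $-k$ correctly, so it reduces to a bookkeeping check. I do not expect a real obstacle; the main thing to verify is that the invariance-under-domino-removal claim is compatible with the fixed choice of $m$, which the length argument above settles.
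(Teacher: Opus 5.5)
Your proof is correct and follows the paper's argument: reduce to $\delta_k$ by domino invariance, then note that $\mathcal B_m(\delta_k)$ is the initial segment $\{0,\ldots,m-k-1\}$ together with $k$ beads of the single parity $m+k+1$. You also spell out the parity count and the length check $\ell(\delta_k)\le\ell(\lambda)\le m$, which the paper treats as immediate.
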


\begin{proof}
By domino invariance it is enough to evaluate \eqref{eq:Delta} on the staircase
$\delta_k$.  Its $m$-beta set is an arithmetic progression of one parity,
together with the vacuum tail; the displayed count follows immediately.
\end{proof}

Equivalently, if the Young diagram is colored as a chessboard with $(1,1)$ of sign
$+1$, define its BG-rank, following Berkovich--Garvan \cite{BG}, by
\[
 \BG(\lambda)=\sum_{(i,j)\in\lambda}(-1)^{i+j}.
\]
Then
\begin{equation*}
 \Delta_m(\lambda)=
 \begin{cases}
 2\BG(\lambda),&m\text{ even},\\
 1-2\BG(\lambda),&m\text{ odd}.
 \end{cases}
\end{equation*}
Thus the BG-rank, the two-runner charge, and the $2$-core encode the same datum.

\subsection{Chebyshev symmetrization}
Choose $z$ with
\begin{equation*}
 2t=z+z^{-1}.
\end{equation*}
The Chebyshev polynomials in \eqref{eq:cheb-rec-intro} satisfy
\begin{equation}\label{eq:cheb-z}
 V_k\!\left(\frac{z+z^{-1}}2\right)
 =\frac{z^{k+1}+z^{-k}}{1+z}.
\end{equation}
Combining \cref{lem:charge-core} with \eqref{eq:cheb-z} gives
\begin{equation}\label{eq:charge-cheb}
 V_{\kappa(\lambda)}(t)
 =\frac{z^{\Delta_m(\lambda)}+z^{1-\Delta_m(\lambda)}}{1+z}.
\end{equation}

Define the charge-refined sum
\begin{equation*}
 \mathcal A_m(z;u)
 =\sum_{\ell(\lambda)\le m}
 z^{\Delta_m(\lambda)}\st_\lambda(u).
\end{equation*}
Then \eqref{eq:charge-cheb} says that the $2$-core/Chebyshev sum is obtained from
$\mathcal A_m$ by the simple involutive symmetrization
\begin{equation}\label{eq:S-op}
 \mathscr S_z f(z)
 :=\frac{f(z)+zf(z^{-1})}{1+z}.
\end{equation}
Indeed,
\begin{equation}\label{eq:S-A}
 \mathscr S_z\mathcal A_m(z;u)
 =\sum_{\ell(\lambda)\le m}
 V_{\kappa(\lambda)}(t)\st_\lambda(u).
\end{equation}

\subsection{The charge recurrence}
We now use \eqref{eq:robbins-tilde}--\eqref{eq:robbins}. As in the skew
interpolation argument, write
\[
 y_i=\frac{u_i}{1+u_i},\qquad Y=(y_1,\ldots,y_n),\qquad P=y_1\cdots y_n,
\]
and use $\mathcal A_n(z;Y)$ as shorthand for $\mathcal A_n(z;u)$ after the
substitution $u_i=y_i/(1-y_i)$. Following the Fischer--H\"ongesberg
normalization, set
\[
 Z_n(z;Y)=\prod_{i<j}(1-y_iy_j)\,\mathcal A_n(z;Y).
\]
The decomposition according to the first beta number gives the exact recurrence
\begin{equation}\label{eq:charge-recurrence}
 Z_n(z;Y)
 =\frac1{1-P^2}
 \sum_{k=1}^n c_k(Y)
 \left[
 zZ_{n-1}(z^{-1};Y_{\hat k})
 +Pz^{-1}Z_{n-1}(z;Y_{\hat k})
 \right],
\end{equation}
where $c_k$ is the same coefficient as in \eqref{eq:ck-common}.
The two states are now $z$ and $z^{-1}$ rather than the two dimerizations of
\cref{sec:skew}.

We spell out the first-beta-number decomposition.  For a beta set
$B=\{b_1<\cdots<b_n\}$, choose in the antisymmetrization
\eqref{eq:robbins} the variable $y_k$ carrying $b_1$, and set
\[
 B'=\{b_2-b_1-1,\ldots,b_n-b_1-1\}.
\]
Then $B'$ is a strict nonnegative beta set of size $n-1$.  The normalized
factors involving $y_k$ give $c_k(Y)$, exactly as in the derivation of
\eqref{eq:skew-coupled}.  If $b_1=2a$, then
\[
 \Delta_n(B)=1-\Delta_{n-1}(B'),
\]
whereas if $b_1=2a+1$, then
\[
 \Delta_n(B)=-1+\Delta_{n-1}(B').
\]
The translation of all $n$ beta exponents supplies $P^{b_1}$.
Consequently the even values of $b_1$ contribute
\[
 \sum_{a\ge0}P^{2a}z^{1-\Delta_{n-1}(B')}
 =\frac{z}{1-P^2}(z^{-1})^{\Delta_{n-1}(B')},
\]
and the odd values contribute
\[
 \sum_{a\ge0}P^{2a+1}z^{-1+\Delta_{n-1}(B')}
 =\frac{Pz^{-1}}{1-P^2}z^{\Delta_{n-1}(B')}.
\]
After summing over $B'$ and over the choice of $k$, these are precisely the two
terms in \eqref{eq:charge-recurrence}.  This proves the recurrence directly
from \eqref{eq:robbins} and also explains why its two states are exchanged by
$z\leftrightarrow z^{-1}$.

\subsection{The Pfaffian candidate}
Write
$q=z+z^{-1}$.
In the $u$-variables define
\begin{equation}\label{eq:R-prefactor}
 R_m(u)
 =\frac1{\prod_i(1+2u_i)}
 \prod_{i<j}\frac{u_i+u_j}{u_i-u_j}
\end{equation}
and
\begin{equation}\label{eq:Cq}
 C_{ij}^{(q)}
 =\frac{(u_i-u_j)
 \bigl(1+u_i+u_j+4u_iu_j+q^2u_iu_j(u_i+u_j)\bigr)}
 {(u_i+u_j)(1+u_i+u_j)}.
\end{equation}

\begin{theorem}[Charge-refined Littlewood identity]\label{thm:charge-pf}
If $m$ is odd, then
\begin{equation*}
 \mathcal A_m(z;u)
 =R_m(u)\,
 \Pf\begin{pmatrix}
 0&(z+qu_j)_j\\
 -(z+qu_i)_i&(C_{ij}^{(q)})
 \end{pmatrix}.
\end{equation*}
If $m$ is even, then
\begin{equation*}
 \mathcal A_m(z;u)
 =R_m(u)\,
 \Pf\begin{pmatrix}
 0&1&\one^T\\
 -1&0&((1+z^2)u)^T\\
 -\one&-(1+z^2)u&C^{(q)}
 \end{pmatrix}.
\end{equation*}
\end{theorem}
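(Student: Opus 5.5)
The plan is to run the same Izergin--Korepin interpolation used for \cref{thm:skew-littlewood}, now for the single-component but $z$-twisted recurrence \eqref{eq:charge-recurrence}. First I rewrite the right-hand sides in the $y$-variables. With $u_i=y_i/(1-y_i)$ one has $u_i+u_j=S_{ij}/((1-y_i)(1-y_j))$ and $1+u_i+u_j=T_{ij}/((1-y_i)(1-y_j))$. After multiplying by $\prod_{i<j}(1-y_iy_j)$, the candidate for $Z_n(z;Y)$ then becomes $\Pi_n(Y)$ times a polynomial prefactor times a Pfaffian with a kernel $\widetilde C^{(q)}_{ij}$ rational in $y$. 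Its border is $(z+qu_j)$ for odd $n$, and the doubly bordered $(1,(1+z^2)u)$ for even $n$. I also check the initial values $Z_0=1$ and $Z_1=\mathcal A_1$. For the latter, one variable gives $\sum_b z^{(-1)^b}(1-y)y^b=(z+y/z)/(1+y)$. This should match $R_1\cdot(z+qu)$ after simplification, which fixes the normalization.

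Next I establish the boundary specializations for the true sums, all from \eqref{eq:charge-recurrence} together with the identity for $c_k$ already used in the skew case. At $y_n=0$ only $k=n$ survives and $P=0$, so $Z_n(z;Y,0)=zZ_{n-1}(z^{-1};Y)$. At $y_n=1$ we have $c_n=0$ and $c_k(Y,1)=(1-y_k)c_k(Y)$. Feeding in the recurrence one rank lower gives $Z_n(z;Y,1)=\prod_{i<n}(1-y_i)\,\widehat Z_{n-1}$, where $\widehat Z_{n-1}$ is some combination of $Z_{n-1}(z)$ and $Z_{n-1}(z^{-1})$ that I read off. At $y_{n-1}=a$, $y_n=a^{-1}$ the two distinguished terms combine to give $Z_{n-2}$ times an explicit factor, now depending on $q$, analogous to the $(t^2-1)$ in \eqref{eq:skew-recip}. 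I then verify the same three relations for the candidate Pfaffians. At $y_n=0$ the row of the zero variable must turn the odd border $z+qu$ into the even double border with $z\mapsto z^{-1}$ (up to the factor $z$), and conversely; this is what forces the border $(1+z^2)u$. At $y_n=1$, i.e.\ $u_n=\infty$, I expand along the last row. At the reciprocal pair, $\Pi_n$ has a simple zero from $T_{n-1,n}$ and $\widetilde C_{n-1,n}$ has a simple pole. So only matchings pairing the two indices survive, and I compute that residue.

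The degree bound is the remaining point. The recurrence gives $\deg_{y_n}((1-P^2)Z_n)\le n+1$ or so, and I need divisibility by $1-P^2$ to obtain $\deg_{y_n}Z_n\le n$. With that, the $n+1$ points $y_n\in\{0,1\}\cup\{y_i^{-1}\}$ determine $Z_n$, and simultaneous induction in $n$ (over both parities, using $z\leftrightarrow z^{-1}$) finishes the proof.

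The main obstacle is the pole cancellation at $P=\pm1$. As \cref{rem:skew-vs-charge} announces, this needs a four-residue identity. I would consider $\mathcal R_i(\theta)$, obtained from \eqref{eq:skew-residue-fn} with the numerator replaced by that of $\widetilde C^{(q)}(a,\theta)$ and $1-\theta$ replaced by $1-\theta^2$. Its residues at $\theta=y_j$ give $\widetilde C_{ij}\alpha_j$. Those at $0$, $\pm1$ and $\theta_0=a/(2a-1)$ give, respectively, the border entry evaluated via $\prod_{p\ne i}p=\eta/a$, two constants, and $b_i$. This expresses $b_i$ minus the kernel applied to $\alpha$ as a linear combination of the border vectors $\one$ and $z+qu$. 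The border created by the recurrence then reduces by row operations to existing borders, and the bordered Pfaffians cancel on $P=\eta$. The delicate part is that the border depends on $z$ and the two terms of \eqref{eq:charge-recurrence} carry $z$ and $z^{-1}$. I would try first to check directly that the combination $zZ_{n-1}(z^{-1})+\eta z^{-1}Z_{n-1}(z)$ corresponds to a single border vector lying in the span produced by the residue identity.
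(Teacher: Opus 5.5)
Your outline follows the paper's proof step for step: the first-beta recurrence \eqref{eq:charge-recurrence}, the specializations at $y_n=0$, $y_n=1$ and $y_n=y_i^{-1}$, the four-residue function $\mathcal R_i$ (the skew numerator replaced by the charge numerator and $1-\theta$ by $1-\theta^2$), and Izergin--Korepin uniqueness. The step that fails as you wrote it is the degree bound, because $Z_n$ is not a polynomial in $y_n$. The factor $1/\prod_i(1+2u_i)$ in $R_m$ becomes $\prod_i(1-y_i)/(1+y_i)$, so your ``polynomial prefactor'' is not polynomial. Once this factor is absorbed into the rows and columns, every entry of row $n$ (the border entries and the kernel entries $\overline K_{jn}$) carries a factor $1/(1+y_n)$. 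Your own computation $Z_1=(z+z^{-1}y)/(1+y)$ already shows the pole at $y=-1$. So you cannot use ``$\deg_{y_n}Z_n\le n$, hence $Z_n$ is determined by $n+1$ values.'' Your count is also inconsistent: removing the degree-two factor $1-P^2$ from degree $n+1$ leaves $n-1$, not $n$, since the skew numbers do not transfer. The correct bookkeeping, which the paper uses, is as follows. Write $(1+y_n)(1-P^2)Z_n=A(y_n)/\prod_{i<n}(y_n-y_i)$ with $\deg A\le 2n+1$. The factors $y_n-y_i$ cancel by symmetry, and after dividing by $1-P^2$ one finds that $(1+y_n)Z_n$ is a polynomial of degree at most $n$. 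Expanding the candidate along row $n$ gives the same bound. The $n+1$ interpolation values are then applied to $(1+y_n)Z_n$, and the count works.

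You will need two further precisions to make the plan concrete. First, at $y_n=1$ there is no mixing of $z$ and $z^{-1}$. The correct relation is $Z_n(z;Y,1)=\frac q2\prod_{i<n}(1-y_i)Z_{n-1}(z;Y)$: the variable at $1$ must carry the largest beta number, whose two parities are weighted equally and give $(z+z^{-1})/2$, and the remaining beta set is unshifted. The candidate has to be checked against exactly this relation. Second, in the pole cancellation the residues of $\mathcal R_i$ at $\theta=\pm1$ are not constants but border vectors: $\frac{q^2}2H_i$ and $sL_i+\frac{sq^2}2H_i$, where $L_i=\frac{1-y_i}{1+y_i}$, $H_i=\frac{y_i}{1+y_i}$ and $s=(-1)^n$. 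Also, the row operation with the coefficients $\alpha_j$ changes the entries of the $b$-row in the auxiliary border columns. To see that the transformed row becomes a multiple of an existing border row, so that the Pfaffian vanishes on $P=\eta$, you need the two sums $\sum_j\alpha_j/(1+y_j)=\frac{1+s}2-s\eta$ and $\sum_jy_j\alpha_j/(1+y_j)=\frac{1-s}2$. Both follow by the same residue method applied to $Q(\theta)/(\theta(1-\theta^2))$ and $Q(\theta)/(1-\theta^2)$, where $Q(\theta)=\prod_{p}\frac{1-\theta p}{\theta-p}$.
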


The rest of the section proves \cref{thm:charge-pf}.  We give the interpolation
argument because it is the new technical ingredient in the symmetric case.

\subsection{The candidate in the \texorpdfstring{$y$}{y}-variables}
\label{ss:charge-candidate}
After the substitution $u_i=y_i/(1-y_i)$, multiplying the charge Pfaffian by the
normalizing factor in the definition of $Z_n$ gives the prefactor
\begin{equation}\label{eq:Pibar}
 \overline\Pi_n(Y)
 =\prod_{i<j}
 \frac{(y_i+y_j-2y_iy_j)(1-y_iy_j)}{y_i-y_j}.
\end{equation}
The factor absorbed into each variable row and column is
\[
 L(a)=\frac{1-a}{1+a},
 \qquad
 H(a)=\frac{a}{1+a},
 \qquad
 L_i=L(y_i),\quad H_i=H(y_i).
\]
Write $S(a,\theta)=a+\theta-2a\theta$, as in \cref{sec:skew}.  The resulting
bulk kernel is
\begin{equation}\label{eq:Kbar}
 \overline K(a,\theta)
 =\frac{(a-\theta)N(a,\theta)}{(1+a)(1+\theta)S(a,\theta)(1-a\theta)},
 \qquad
 \overline K_{ij}=\overline K(y_i,y_j),
\end{equation}
where
\begin{equation}\label{eq:N-explicit}
 N(a,\theta)=(1+3a\theta)(1-a)(1-\theta)+q^2a\theta\,S(a,\theta).
\end{equation}
For later reference, the scaled odd-rank boundary is
\[
 G_i(z)=zL_i+qH_i=\frac{z+z^{-1}y_i}{1+y_i}.
\]
Thus the normalized Pfaffian candidate can be written as
\begin{equation}\label{eq:charge-cand-odd}
 Z_n=\overline\Pi_n
 \Pf\begin{pmatrix}0&G(z)^T\\-G(z)&\overline K\end{pmatrix}
 \qquad (n\ \text{odd}),
\end{equation}
and
\begin{equation}\label{eq:charge-cand-even}
 Z_n=\overline\Pi_n
 \Pf\begin{pmatrix}
 0&1&L^T\\
 -1&0&(zqH)^T\\
 -L&-zqH&\overline K
 \end{pmatrix}
 \qquad (n\ \text{even}),
\end{equation}
where we used $1+z^2=zq$.

\subsection{Boundary values}\label{ss:charge-boundary}
The Pfaffian candidate, as well as the recurrence \eqref{eq:charge-recurrence},
satisfies
\begin{equation}\label{eq:charge-zero}
 Z_n(z;Y,0)=zZ_{n-1}(z^{-1};Y),
\end{equation}
\begin{equation}\label{eq:charge-one}
 Z_n(z;Y,1)=\frac q2\prod_{i<n}(1-y_i)Z_{n-1}(z;Y),
\end{equation}
and, for $y_{n-1}=a$, $y_n=a^{-1}$,
\begin{align}\label{eq:charge-recip}
 Z_n(z;Y,a,a^{-1})
 ={}&(q^2-4)
 \frac{(a-1)^2}{(a+1)^2a^{n-2}}\\
 &\times
 \prod_{i\le n-2}
 (1-2y_i+ay_i)(y_i+a-2ay_i)
 Z_{n-2}(z;Y).
 \notag
\end{align}
These are the same interpolation locations as in the skew problem.  We indicate
the verifications.

On the side of the sum $\mathcal A_n$, identity \eqref{eq:charge-zero} is the
statement that increasing the number of variables shifts the beta set: if
$\ell(\lambda)\le n-1$, then
\[
 \mathcal B_n(\lambda)=\bigl(\mathcal B_{n-1}(\lambda)+1\bigr)\cup\{0\},
 \qquad\text{whence}\qquad
 \Delta_n(\lambda)=1-\Delta_{n-1}(\lambda)
\]
by \eqref{eq:Delta}.  Since $\st_\lambda(u_1,\ldots,u_{n-1},0)
=\st_\lambda(u_1,\ldots,u_{n-1})$, and since this vanishes when
$\ell(\lambda)>n-1$, setting $y_n=0$ replaces $z^{\Delta_n(\lambda)}$ by
$z\cdot(z^{-1})^{\Delta_{n-1}(\lambda)}$ termwise; the normalizing product
$\prod_{i<j}(1-y_iy_j)$ is unaffected.  This is the precise sense in which the
last variable interchanges the two runners.  The other two specializations of
$\mathcal A_n$ are the specializations of the modified Robbins recurrence
\eqref{eq:charge-recurrence} at the corresponding points, and are the same
computations as in \cite[pp.~679--681]{FH}.

For the Pfaffian candidate
\eqref{eq:charge-cand-odd}--\eqref{eq:charge-cand-even}, all three identities
come from the degenerations
\begin{equation}\label{eq:kernel-degenerations}
 \overline K(a,0)=L(a),
 \quad
 \overline K(a,1)=-\frac{q^2}{2}H(a),
 \quad
 \lim_{\theta\to a^{-1}}(1-a\theta)\overline K(a,\theta)
 =(q^2-4)\frac{a-1}{a+1},
\end{equation}
all three immediate from \eqref{eq:Kbar}--\eqref{eq:N-explicit}; for the last
one note that $N(a,a^{-1})=(q^2-4)S(a,a^{-1})$.  We describe the mechanism for
odd $n$; for even $n$ it is the same computation with the two forms
\eqref{eq:charge-cand-odd} and \eqref{eq:charge-cand-even} exchanged.

At $y_n=0$ one has $L_n=1$, $H_n=0$, $G_n(z)=z$ and, by \eqref{eq:Pibar},
$\overline\Pi_n(Y,0)=\overline\Pi_{n-1}(Y)$.  By the first degeneration in
\eqref{eq:kernel-degenerations} the kernel row of the vanishing variable becomes
the constant vector $L$.  Subtracting $z$ times that row and column from the
$G(z)$-border replaces $G(z)=zL+qH$ by $qH$; rescaling the border row and column
by $z^{-1}$ then turns the bordered Pfaffian into the one in
\eqref{eq:charge-cand-even} with $z$ replaced by $z^{-1}$, at the cost of the
factor $z$.  This is \eqref{eq:charge-zero}.

At $y_n=1$ one has $L_n=0$, $H_n=\frac12$, $G_n(z)=\frac q2$ and
$\overline\Pi_n(Y,1)=(-1)^{n-1}\prod_{i<n}(1-y_i)\,\overline\Pi_{n-1}(Y)$.  By
the second degeneration the kernel row of the last variable is
$-\frac{q^2}2H$; factoring $-\frac q2$ out of that row and column, and then
performing the same single row operation as above, produces
\eqref{eq:charge-cand-even}, and routine sign bookkeeping gives
\eqref{eq:charge-one}.

Finally, at $y_{n-1}=a$, $y_n=a^{-1}$ the prefactor $\overline\Pi_n$ acquires a
simple zero from the factor $1-y_{n-1}y_n$, while $\overline K_{n-1,n}$ acquires
a simple pole there.  Consequently only those terms of the Pfaffian expansion in
which the indices $n-1$ and $n$ are paired with each other survive the limit;
what remains is $Z_{n-2}$, multiplied by the residue in the third degeneration
of \eqref{eq:kernel-degenerations} and by the limit of
$\overline\Pi_n/\overline\Pi_{n-2}$ at $y_{n-1}=a$, $y_n=a^{-1}$.  Together these
give the prefactor in \eqref{eq:charge-recip}; in particular the factor
$q^2-4=(z-z^{-1})^2$ there is that residue.

\subsection{Pole cancellation by a four-residue identity}
The only non-obvious issue in \eqref{eq:charge-recurrence} is the apparent pole at
$P^2=1$.  We record the row identity that removes it.

Define
\begin{equation*}
 b_i=\frac{1-y_i}{y_i}
 \prod_{p\ne i}
 \frac{1-2y_i+y_iy_p}{y_i+y_p-2y_iy_p},
 \qquad
 \alpha_j=\frac{1+y_j}{y_j}
 \prod_{p\ne j}\frac{1-y_jy_p}{y_j-y_p}.
\end{equation*}
Put $s=(-1)^n$ and suppose $P=\eta\in\{1,-1\}$.

\begin{lemma}[Four-residue row identity]\label{lem:four-residue}
On $P=\eta=\pm1$,
\begin{equation}\label{eq:row-identity}
 \sum_j\overline K_{ij}\alpha_j
 =sb_i-s(1+\eta)L_i-\frac{1+s}{2}q^2H_i.
\end{equation}
Moreover,
\begin{equation}\label{eq:small-alpha}
 \sum_j\frac{\alpha_j}{1+y_j}=\frac{1+s}{2}-s\eta,
 \qquad
 \sum_j\frac{y_j\alpha_j}{1+y_j}=\frac{1-s}{2}.
\end{equation}
\end{lemma}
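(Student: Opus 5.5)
The plan is to prove \eqref{eq:row-identity} by a residue computation in an auxiliary variable $\theta$, in the same way as the skew row identity \eqref{eq:skew-row-identity}, but now with one additional pole. Fix $i$, put $a=y_i$ and $Y_{\hat i}=Y\setminus\{y_i\}$. The starting observation concerns $\prod_{p\in Y}\frac{1-\theta p}{\theta-p}$. At $\theta=y_j$ its residue is $(1-y_j^2)\prod_{p\ne j}\frac{1-y_jp}{y_j-p}$, and this equals $\alpha_j$ once it is divided by $y_j(1-y_j)$. So I will consider
\[
 \mathcal R_i(\theta)=c\,
 \frac{N(a,\theta)}{(1+a)(1+\theta)\,S(a,\theta)\,\theta\,(1-\theta)}
 \prod_{p\in Y_{\hat i}}\frac{1-\theta p}{\theta-p},
\]
with $c=\pm1$ a normalizing sign. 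This is obtained from $\overline K(a,\theta)\prod_{p\in Y}\frac{1-\theta p}{\theta-p}\cdot\frac{1}{\theta(1-\theta)}$ by cancelling the factor $(a-\theta)/(1-a\theta)$ against the $p=a$ factor. Its residue at $\theta=y_j$, $j\ne i$, is then $\overline K_{ij}\alpha_j$ up to the fixed sign $c$. There is no pole at $\theta=a$.

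Next I will list the remaining finite poles: $\theta=0$, $\theta=1$, $\theta=-1$ (from $1+\theta$), and $\theta_0=a/(2a-1)$, the root of $S(a,\theta)$. These are the four residues of the lemma's title. The degree count at infinity goes as follows. $N$ has degree $2$ in $\theta$, the explicit denominator has degree $4$, and the product has degree $0$. Hence $\mathcal R_i=O(\theta^{-2})$, the residue at infinity vanishes, and the sum of all finite residues is zero.

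The four boundary residues are computed as follows.
\begin{itemize}
\item At $\theta=0$: use $\overline K(a,0)=L(a)$ from \eqref{eq:kernel-degenerations} together with $\prod_{p\in Y_{\hat i}}(-p)^{-1}=(-1)^{n-1}a/\eta$, which holds on $P=\eta$. This gives a multiple $s\eta L_i$; the cofactor $1/a$ and the factor $a$ recombine.
\item At $\theta=1$: the product is identically $1$, and the degeneration $\overline K(a,1)=-\tfrac{q^2}{2}H(a)$ gives a $q^2H_i$ term.
\item At $\theta=-1$: the product equals $(-1)^{n-1}$, and $N(a,-1)$ is evaluated directly. I expect this to produce the $sL_i$ and $s\,q^2H_i/2$ contributions, so that the terms combine with the $\theta=1$ residue into the coefficients $-s(1+\eta)L_i$ and $-\frac{1+s}{2}q^2H_i$.
\item At $\theta_0$: use $1-\theta_0p=(1-2a+ap)/(2a-1)$ and $\theta_0-p=-(a+p-2ap)/(2a-1)$. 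The factors $(2a-1)$ cancel across the $n-1$ terms, and $N(a,\theta_0)$ simplifies because $S$ vanishes there. This should give exactly $s\,b_i$.
\end{itemize}

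The main obstacle is the evaluation at $\theta=-1$ and $\theta_0$. There $N$ must simplify to expressions in $L(a)$, $H(a)$ and $(1-a)/a$ with the right $q$-dependence, and the sign bookkeeping with $s=(-1)^n$ is error-prone. I would check it first for $n=1,2$ by hand.

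For \eqref{eq:small-alpha}, the same method applies to the simpler functions
\[
 \frac{1}{\theta(1-\theta^2)}\prod_{p\in Y}\frac{1-\theta p}{\theta-p}
 \quad\text{and}\quad
 \frac{1}{(1-\theta^2)}\prod_{p\in Y}\frac{1-\theta p}{\theta-p}.
\]
Their residues at $y_j$ are $\alpha_j/(1+y_j)$ and $y_j\alpha_j/(1+y_j)$. The remaining poles are at $0$ (first function only, where the product equals $(-1)^n/\eta$ on $P=\eta$), at $\pm1$ (where the product equals $1$ and $(-1)^n$), and at infinity (where the product tends to $(-1)^nP$). Summing these residues gives the two stated values.
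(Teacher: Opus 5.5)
Your proposal is correct and is essentially the paper's proof. With $c=-1$ your $\mathcal R_i$ is exactly the paper's auxiliary function, and the four finite residues at $0,\pm1,\theta_0$, together with the vanishing residue at infinity, give the row identity; the two simpler functions you use give the $\alpha$-sums in the same way.

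One sign needs fixing: the residue at $\theta_0$ is $-s\,b_i$, not $+s\,b_i$, and it is this minus sign that puts $+s\,b_i$ on the right-hand side. Separately, your intermediate expressions for $1-\theta_0p$ and $\theta_0-p$ should have denominator $1-2a$ rather than $2a-1$, though their ratio is unaffected.
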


\begin{proof}
Fix $a=y_i$, let $Y_{\hat i}=Y\setminus\{y_i\}$, and consider
\begin{equation}\label{eq:charge-residue-fn}
 \mathcal R_i(\theta)
 =-
 \frac{N(a,\theta)}{(1+a)S(a,\theta)\theta(1-\theta^2)}
 \prod_{p\in Y_{\hat i}}\frac{1-\theta p}{\theta-p}.
\end{equation}
At the poles $\theta=y_j$, $j\ne i$, the residues are
$\overline K_{ij}\alpha_j$.  The remaining finite poles are
\[
 \theta=0,\qquad \theta=1,\qquad \theta=-1,\qquad
 \theta_0=\frac{a}{2a-1}.
\]
The explicit numerator \eqref{eq:N-explicit} gives
\[
 N(a,0)=1-a,\qquad
 N(a,1)=q^2a(1-a),
\]
\[
 N(a,-1)=-(3a-1)\bigl(2(1-a)+q^2a\bigr),
\]
and
\[
 N(a,\theta_0)
 =-\frac{(a-1)^2(a+1)(3a-1)}{(2a-1)^2}.
\]
Using $P=\eta$ one obtains
\[
 \operatorname*{Res}_{0}\mathcal R_i=s\eta L_i,
 \qquad
 \operatorname*{Res}_{1}\mathcal R_i=\frac{q^2}{2}H_i,
\]
\[
 \operatorname*{Res}_{-1}\mathcal R_i=sL_i+\frac{s q^2}{2}H_i,
 \qquad
 \operatorname*{Res}_{\theta_0}\mathcal R_i=-sb_i.
\]
Since $\mathcal R_i(\theta)=O(\theta^{-2})$, the residue at infinity vanishes.
Summing all residues proves \eqref{eq:row-identity}.

For \eqref{eq:small-alpha}, put
\[
 Q(\theta)=\prod_{p\in Y}\frac{1-\theta p}{\theta-p}.
\]
The residues at $\theta=y_j$ of
$Q(\theta)/(\theta(1-\theta^2))$ and $Q(\theta)/(1-\theta^2)$ are respectively
$\alpha_j/(1+y_j)$ and $y_j\alpha_j/(1+y_j)$.  Summing the remaining residues at
$0,1,-1$ gives the two identities in \eqref{eq:small-alpha}.
\end{proof}

\begin{lemma}[Pole cancellation]\label{lem:charge-pole}
The numerator of \eqref{eq:charge-recurrence} is divisible by $1-P^2$.
\end{lemma}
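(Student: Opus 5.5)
We argue by induction on $n$, substituting the Pfaffian candidates \eqref{eq:charge-cand-odd}--\eqref{eq:charge-cand-even} for $Z_{n-1}$ in the numerator
\[
 \mathcal N_n=\sum_{k=1}^n c_k(Y)\bigl[zZ_{n-1}(z^{-1};Y_{\hat k})+Pz^{-1}Z_{n-1}(z;Y_{\hat k})\bigr]
\]
of \eqref{eq:charge-recurrence}, as in the skew case. Since $1-P^2=(1-P)(1+P)$ and the two factors are coprime irreducible-free of repeated factors, it suffices to show that $\mathcal N_n$ vanishes identically on each hypersurface $P=\eta$, $\eta=\pm1$. On such a component we may treat $y_1,\dots,y_{n-1}$ as free and $y_n=\eta/(y_1\cdots y_{n-1})$.

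First we rewrite $c_k(Y)\,\overline\Pi_{n-1}(Y_{\hat k})$ as $\overline\Pi_n(Y)$ times a single-index factor. From \eqref{eq:ck-common} and \eqref{eq:Pibar} this factor is, up to the sign $(-1)^{k-1}$ from reordering, $(1-y_k)\prod_{i\ne k}\frac{(1-2y_k+y_iy_k)y_i}{(y_i+y_k-2y_iy_k)(y_i-y_k)}$, which we expect to equal $P\,b_k\,(1+y_k)^{-1}\cdot(\text{const})$ times the $\alpha$-like pieces. Concretely, we expect the sum over $k$ to become a Laplace-type expansion of a single bordered Pfaffian of size one larger: the $k$-th term is $\pm(\text{weight}_k)\Pf$ of the $(n-1)$-candidate with row $k$ deleted, so $\mathcal N_n/\overline\Pi_n$ is $\Pf$ of the $n$-variable kernel $\overline K$ augmented by a new border whose entries are the weights $\beta_k$ (built from $b_k$, $L_k$, $H_k$) together with the existing borders ($G(z^{\pm1})$, or $L$ and $zqH$). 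The two terms $zZ_{n-1}(z^{-1})$ and $Pz^{-1}Z_{n-1}(z)$ combine, after pulling $z$ through the border rescaling used in the proof of \eqref{eq:charge-zero}, into borders in $\operatorname{span}\{L,H\}$ with $z$-dependent coefficients.

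The key step is then Lemma~\ref{lem:four-residue}. On $P=\eta$, \eqref{eq:row-identity} says that $b$ is, modulo $\overline K\alpha$, a combination of $L$ and $H$. Adding $-s\sum_j\alpha_j(\text{row/column }j)$ to the new border row therefore replaces $sb$ by $s(1+\eta)L+\frac{1+s}{2}q^2H$, and the corrections to the border entries paired with the existing borders are computed by \eqref{eq:small-alpha}, since $\sum_j\alpha_jL_j$ and $\sum_j\alpha_jH_j$ are exactly the two sums there (using $L=\frac{1}{1+y}-\frac{y}{1+y}$ and $H=\frac{y}{1+y}$). After this operation the new border lies in the span of the old borders, and the two contributions (from $z$ and $z^{-1}$) cancel. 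This mirrors the skew argument, where borders in $\operatorname{span}\{\one,y\}$ made the extra term vanish.

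The main obstacle is the parity bookkeeping. For odd $n$, $Z_{n-1}$ has the even form with two borders $L$ and $zqH$; for even $n$ it has the one-border form $G$. We must check separately that the extra border produced by the row operation is a linear combination making the bordered Pfaffian have dependent rows (odd $n$, with $s=-1$), or that the two bordered Pfaffians coming from $z$ and $z^{-1}$ cancel using both identities in \eqref{eq:small-alpha} (even $n$, $s=1$). I would do the case $\eta=1$ first, where the $L$-term survives as $2sL$, then $\eta=-1$, where it drops out. Signs from reordering rows should be tracked against the $(-1)^{k-1}$ in the expansion.
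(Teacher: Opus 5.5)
Your plan matches the paper's proof: on each component $P=\eta$ you read the $k$-sum as the Laplace expansion of a bordered Pfaffian with new border $b$, merge the $z$ and $z^{-1}$ terms by multilinearity (into the single border $D_\eta=(1+\eta)L+q(z+\eta z^{-1})H$ for even $n$, or into borders $L$, $(1+\eta)qH$ with auxiliary entry $z+\eta z^{-1}$ for odd $n$), and then use the $\alpha$-row operation with \eqref{eq:row-identity}--\eqref{eq:small-alpha} to make the transformed $b$-row proportional to an existing row. One correction: the Vandermonde factors in $c_k$ and in $\overline\Pi_n(Y)/\overline\Pi_{n-1}(Y_{\hat k})$ cancel, so the single-index factor is exactly $(-1)^{n-k}Pb_k$, with no extra $\prod_{i\ne k}(y_i-y_k)^{-1}$, no $(1+y_k)^{-1}$, and no $\alpha$-type pieces; this is precisely why the new border is literally $b$ and the row identity applies.
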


\begin{proof}
After the row scaling above, the recurrence coefficient satisfies
\begin{equation*}
 c_k\frac{\overline\Pi_{n-1}(Y_{\hat k})}{\overline\Pi_n(Y)}
 =(-1)^{n-k}P b_k.
\end{equation*}
Thus, on $P=\eta$, the pole numerator is a bordered Pfaffian with $b$ as one of
its borders.  Apply the simultaneous row/column operation encoded by the
coefficients $\alpha_j$.

Suppose first that $n$ is even.  Then \eqref{eq:row-identity} gives
\[
 b-\overline K\alpha=(1+\eta)L+q^2H.
\]
The other recurrence border is
\[
 D_\eta=(1+\eta)L+q(z+\eta z^{-1})H.
\]
If $\eta=1$, then $D_1=2L+q^2H$, so the two borders are equal.  If $\eta=-1$,
then $D_{-1}=q(z-z^{-1})H$, while the transformed $b$-border is $q^2H$; hence the
two borders are proportional.  Moreover, the second identity in
\eqref{eq:small-alpha} gives $\sum_i\alpha_iH_i=0$, so the auxiliary entry created
by the row operation vanishes.  In both cases the Pfaffian is zero.

Now suppose that $n$ is odd.  By Pfaffian multilinearity the two even-rank
inductive terms combine into one Pfaffian with variable borders
$L$ and $(1+\eta)qH$, mutual auxiliary entry $z+\eta z^{-1}$, and the recurrence
border $b$.  Since $s=-1$, \eqref{eq:row-identity} gives
\[
 b+\overline K\alpha=(1+\eta)L.
\]
The two identities in \eqref{eq:small-alpha} imply
\[
 \sum_iL_i\alpha_i=\eta-1,
 \qquad
 \sum_iH_i\alpha_i=1.
\]
For $\eta=1$ the transformed $b$-row, including its auxiliary entries, is twice
the existing $L$-row.  For $\eta=-1$ its variable part vanishes and the remaining
auxiliary part is proportional to the other auxiliary row.  Hence the Pfaffian
again vanishes.  Thus the recurrence numerator vanishes on both components
$P=1$ and $P=-1$, proving the required divisibility.
\end{proof}

\subsection{Interpolation uniqueness}
We include the degree bookkeeping.  First consider the Pfaffian candidate and
expand it along the row or column containing the index $n$.  If $n$ is paired
with a variable index $j$, the product of the corresponding prefactor and kernel
entries is
\[
 \frac{S(y_j,y_n)T(y_j,y_n)}{y_j-y_n}\,
 \overline K(y_j,y_n)
 =\frac{N(y_j,y_n)}{(1+y_j)(1+y_n)}.
\]
If $n$ is paired with an auxiliary border, the same denominator $1+y_n$ comes
from $L_n$, $H_n$, or $G_n$.  Put all terms over the additional common
denominator $\prod_{i<n}(y_n-y_i)$.  Before cancellation, the numerator has
degree at most $2n-1$ in $y_n$.  Alternation of the Pfaffian against the
Vandermonde denominator makes this numerator vanish at every $y_n=y_i$; after
division by the product of these $n-1$ linear factors, the degree is at most
$n$.  Thus the candidate has denominator at most $1+y_n$ and
$(1+y_n)Z_n$ has degree at most $n$.

The recurrence has the same bound.  Using the inductive degree estimate and
putting the terms of \eqref{eq:charge-recurrence} over a common denominator gives
\[
 (1+y_n)(1-P^2)Z_n
 =\frac{A(y_n)}{\prod_{i<n}(y_n-y_i)},
 \qquad \deg A\le 2n+1.
\]
The apparent factors $y_n-y_i$ cancel by symmetry, and
\cref{lem:charge-pole} removes the degree-two factor $1-P^2$ from the numerator.
After these two divisions the remaining degree is at most
$2n+1-(n-1)-2=n$.  Hence both the recurrence and the candidate have, as
functions of $y_n$, denominator at most $1+y_n$, and
\[
 (1+y_n)Z_n
\]
is a polynomial of degree at most $n$.  The $n+1$ values
\[
 y_n=0,\qquad y_n=1,\qquad y_n=y_i^{-1}\quad(1\le i<n)
\]
are given by \eqref{eq:charge-zero}--\eqref{eq:charge-recip} and agree for the two
sides.  Induction, starting with
\[
 Z_0=1,
 \qquad
 Z_1(z;y)=\frac{z+z^{-1}y}{1+y},
\]
proves \cref{thm:charge-pf}.

\section{The Pfaffian from \texorpdfstring{\cite{PR}}{[PR]} and the Chebyshev transform}\label{sec:pr-symmetric}
We now connect the charge identity of \cref{sec:charge} with the geometry of
symmetric matrix orbits.  From that section we use only two outputs: the
charge-to-Chebyshev symmetrization \eqref{eq:charge-cheb}--\eqref{eq:S-A} and
the charge-refined Pfaffian formula \cref{thm:charge-pf}.  Applying the former
to the latter will turn out to produce exactly the
\cite{PR} corank
generating series.

\subsection{Packaging the PR classes}
Define the corank generating series
\begin{equation*}
 \Phi_m(t;u)
 =\sum_{r=0}^m t^r\ssm(\Sigma^S_{m,r})(u),
 \qquad q=2t.
\end{equation*}
The symmetric $W$-function of Definition~5.3 in \cite{PR} can be repackaged
into the following single Pfaffian.

\begin{proposition}[PR generating Pfaffian]\label{prop:PR-pf}
If $m$ is odd,
\begin{equation*}
 \Phi_m(t;u)
 =R_m(u)\,
 \Pf\begin{pmatrix}
 0&(1+qu_j)_j\\
 -(1+qu_i)_i&C^{(q)}
 \end{pmatrix}.
\end{equation*}
If $m$ is even,
\begin{equation*}
 \Phi_m(t;u)
 =R_m(u)\,
 \Pf\begin{pmatrix}
 0&1&\one^T\\
 -1&0&(qu)^T\\
 -\one&-qu&C^{(q)}
 \end{pmatrix}.
\end{equation*}
Here $R_m$ and $C^{(q)}$ are given by
\eqref{eq:R-prefactor}--\eqref{eq:Cq}.
\end{proposition}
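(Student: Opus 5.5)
The plan mirrors the skew comparison of \cref{sec:skew}, run in reverse. We start from the explicit localization formula $W^S_{m,r}$ of \cite[Def.~5.3]{PR}, which by \cite[Thm.~5.5]{PR} equals $\csm(\Sigma^S_{m,r})$. Dividing by $D^S_m(u)=\prod_{i\le j}(1+u_i+u_j)$ gives $\ssm(\Sigma^S_{m,r})$. The $W$-function should have the subset-sum shape
\[
 W^S_{m,r}=\sum_{|I|=r} W^S_{m-r}(u_{\bar I})\cdot(\text{internal factor on }I)\cdot\prod_{i\in I,\,j\in\bar I}(\text{cross factor}),
\]
where $W^S_{m-r}$ is the full-rank (open orbit) class in the complementary variables. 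It is analogous to the displayed formula for $W^\wedge_{m,r}$, but with internal factor $2^r\prod_{i<j\in I}(u_i+u_j)$ (up to diagonal factors $\prod_{i\in I}(1+2u_i)$) reflecting the fundamental class $2^r s_{\delta_r}$.

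The key steps are as follows.

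First, we split the kernel as $C^{(q)}=A'+q^2B'$, with
\[
 A'_{ij}=\frac{(u_i-u_j)(1+u_i+u_j+4u_iu_j)}{(u_i+u_j)(1+u_i+u_j)},\qquad B'_{ij}=\frac{(u_i-u_j)u_iu_j}{1+u_i+u_j}.
\]
The borders are split likewise: $1+qu_j$ in odd rank, and the pair $\one,\ qu$ in even rank.

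Second, we expand the Pfaffian multilinearly according to which variable indices are matched by $B'$-edges or by the $qu$-part of the border. Call this set $I$; the power of $q=2t$ is then $|I|=r$, which accounts for the $2^r t^r$. We evaluate the internal Pfaffian on $I$ by Schur's Pfaffian identity, or its bordered limit as in the odd skew case, after rescaling rows by $u_i$. This should give $\prod_{i<j\in I}\frac{u_i-u_j}{1+u_i+u_j}\prod_{i\in I}u_i^{\cdot}$ up to monomials.

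Third, we identify the complementary Pfaffian in $A'$ (with the border $\one$ or $1$), multiplied by $R_{m-r}(u_{\bar I})\,D^S_{m-r}$, with $W^S_{m-r}(u_{\bar I})$. This is the $t=0$ case, i.e.\ the full-rank class, which I would check directly against \cite{PR} (or against the $q=0$ specialization of the same definition). Finally, the leftover factors of $R_m D^S_m$ split into $I$-internal, $\bar I$-internal and cross parts. The cross part $\prod_{i\in I,j\in\bar I}\frac{(u_i+u_j)(1+u_i+u_j)}{u_i-u_j}$ has to match the normal-direction factor in $W^S_{m,r}$, and the sign bookkeeping is done exactly as in the skew case via the shuffle sign of $I$ against $\bar I$.

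The main obstacle is that the entries of $A'$ are not of Schur type. The numerator $1+u_i+u_j+4u_iu_j$ equals $\tfrac12\bigl((1+2u_i)(1+2u_j)+1\bigr)$, so the full-rank piece is not a single product, and matching it with the $r=0$ term of \cite[Def.~5.3]{PR} may require rewriting the PR formula itself as a Pfaffian. I would try the substitution $v_i=1+2u_i$, under which $u_i+u_j=(v_i+v_j-2)/2$ and $1+u_i+u_j=(v_i+v_j)/2$, and look for a Schur/Nimmo-type Pfaffian identity. Failing that, I would prove the $r=0$ identity itself by the same Izergin--Korepin interpolation used in \cref{sec:charge}: degree bound in $u_m$, plus specializations at $u_m=0$ and $u_m=-1-u_i$. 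The general-$r$ statement then follows from the subset expansion above.
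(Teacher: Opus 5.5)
Your plan is essentially the paper's proof. The paper also:
\begin{enumerate}
\item writes $C^{(q)}=C^{(0)}+q^2B$ and expands multilinearly over the set $I$ of indices carried by $B$-edges or by the $qu$-border;
\item evaluates the $I$-block by Schur's identity in the variables $1+2u_i$, using the bordered limit when $|I|$ is odd;
\item identifies the complementary $q=0$ Pfaffian times $\Gamma^S_{m-r}(u_{\bar I})$ directly with the full-rank function $W^S_{m-r}(u_{\bar I})$ of \cite{PR}, with no interpolation;
\item reads off the cross factor and the shuffle sign from $\Gamma^S_m/(\Gamma^S_I\Gamma^S_{\bar I})$.
\end{enumerate}

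Two slips need fixing. First, the internal factor is $\prod_{i\le j,\,i,j\in I}(u_i+u_j)=2^r\prod_{i\in I}u_i\prod_{i<j,\,i,j\in I}(u_i+u_j)$, so the diagonal correction is $\prod_{i\in I}u_i$, not $\prod_{i\in I}(1+2u_i)$. Second, $\tfrac12\bigl((1+2u_i)(1+2u_j)+1\bigr)=1+u_i+u_j+2u_iu_j$, not $1+u_i+u_j+4u_iu_j$.
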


\begin{proof}
Write
\[
 C^{(q)}=C^{(0)}+q^2B,
 \qquad
 B_{ij}=u_i u_j\frac{u_i-u_j}{1+u_i+u_j},
\]
and put
\[
 \Gamma_m^S(u)=D_m^S(u)R_m(u)
 =\prod_{i<j}\frac{(u_i+u_j)(1+u_i+u_j)}{u_i-u_j}.
\]
Expand the Pfaffian multilinearly in the $q^2B$ entries and, when the number of
selected variable indices is odd, in the $q u_i$ boundary entries.  A term of
total degree $q^r$ is indexed by a subset $I\subset[m]$ with $|I|=r$: the
indices in $I$ are paired among themselves through $B$, with one of them paired
to the $q$-boundary when $r$ is odd, and the complementary indices are paired
through the $q=0$ Pfaffian.  The latter Pfaffian, multiplied by
$\Gamma^S_{m-r}(u_{\bar I})$, is the full-rank function
$W^S_{m-r}(u_{\bar I})$.

It remains to calculate the factor carried by $I$.  Set
\[
 \mathcal P_I=
 \begin{cases}
 \displaystyle
 \Pf\left[\frac{u_i-u_j}{1+u_i+u_j}\right]_{i,j\in I},
 &|I|\text{ even},\\[4mm]
 \displaystyle
 \Pf\begin{pmatrix}
 0&\one^T\\
 -\one&\left[\frac{u_i-u_j}{1+u_i+u_j}\right]_{i,j\in I}
 \end{pmatrix},
 &|I|\text{ odd}.
 \end{cases}
\]
Schur's Pfaffian identity, applied to the variables $1+2u_i$, gives
\[
 \mathcal P_I
 =\prod_{\substack{i<j\\i,j\in I}}
 \frac{u_i-u_j}{1+u_i+u_j}.
\]
For odd $|I|$, this bordered form is the limit of Schur's identity as an
auxiliary variable tends to infinity.  Since each selected vertex supplies one
factor $u_i$, the internal contribution to the coefficient of $t^r$ is
\begin{align*}
 &2^r
 \prod_{i\in I}u_i
 \prod_{\substack{i<j\\i,j\in I}}
 \frac{(u_i+u_j)(1+u_i+u_j)}{u_i-u_j}
 \mathcal P_I
 \\
 &\hspace{35mm}=
 \prod_{\substack{i\le j\\i,j\in I}}(u_i+u_j).
\end{align*}
Here the factor $2^r$ is the conversion from $q^r$ to $t^r$, because $q=2t$.
The quotient $\Gamma_m^S/(\Gamma_I^S\Gamma_{\bar I}^S)$ supplies the cross
factor.  We therefore obtain the explicit subset expansion
\begin{align*}
 D_m^S[t^r]\Phi_m
 =\sum_{\substack{I\subset[m]\\|I|=r}}
 &W^S_{m-r}(u_{\bar I})
 \prod_{\substack{i\le j\\i,j\in I}}(u_i+u_j)
 \prod_{\substack{i\in I\\j\in\bar I}}
 \frac{(u_i+u_j)(1+u_i+u_j)}{u_i-u_j}.
\end{align*}
With the inherited order on $I$ and $\bar I$, the Pfaffian sign is exactly the
sign of the displayed cross product.  The right-hand side is the symmetric
$W$-function $W^S_{m,r}$ of Definition~5.3 in \cite{PR}; Theorem~5.5 there
identifies it with $\csm(\Sigma^S_{m,r})$.  Dividing by
$D_m^S=c(S^2\CC^m)$ proves the proposition.
\end{proof}

\begin{remark}[Classical Pfaffians for symmetric and skew-symmetric degeneracy loci]
\label{rem:classical-pfaffians}
Pfaffians already occur in the classical formulas for the fundamental classes of
symmetric and skew-symmetric rank loci.  In our notation their staircase classes
are
\[
 [\overline{\Sigma^\wedge_{m,r}}]=s_{r-1,r-2,\ldots,1},
 \qquad
 [\overline{\Sigma^S_{m,r}}]=2^r s_{r,r-1,\ldots,1};
\]
see \cite{JLP,HT,FNR} and the broader degeneracy-locus framework in
\cite{Kazarian,AF}.  Equivalently, these are staircase Schur $P$- and $Q$-classes
(with the standard normalization), and hence admit the classical Schur-Pfaffian
formulas.  Kazarian developed Pfaffian formulas for Lagrangian and symmetric
degeneracy loci, and Anderson--Fulton place these Grassmannian formulas in a
larger type $B/C/D$ Pfaffian theory.

The Pfaffians in \cref{sec:skew,sec:charge,sec:pr-symmetric} are of a different
kind: they are indexed by the Chern roots $u_i$, rather than by the parts of the
staircase.  Nevertheless their lowest homogeneous terms recover the classical
classes above.  There is also a direct trace of the classical $P/Q$ distinction
in the root kernels.  If
\[
 B^\wedge_{ij}=\frac{u_i-u_j}{1+u_i+u_j}
\]
is the corank-selecting part of the skew Pfaffian in
\cref{sec:skew}, then the coefficient of $q^2$ in the symmetric bulk kernel
\eqref{eq:Cq} is
\[
 B^S_{ij}
 =u_i u_j\frac{u_i-u_j}{1+u_i+u_j}
 =u_i u_j B^\wedge_{ij}.
\]
Together with $q=2t$, the additional factors $u_i$ and $2$ are exactly what is
visible in passing from the skew staircase class $s_{\delta_{r-1}}$ to the
symmetric staircase class $2^r s_{\delta_r}$ at lowest degree.
\end{remark}

\subsection{The symmetrization square}
The charge Pfaffian \cref{thm:charge-pf} has exactly the same bulk kernel
$C^{(q)}$ as \cref{prop:PR-pf}.  Only the boundary differs.  The operator
$\mathscr S_z$ from \eqref{eq:S-op} transforms one into the other.
Indeed, for odd $m$,
\[
 \frac{(z+qu)+z(z^{-1}+qu)}{1+z}=1+qu.
\]
For even $m$, Pfaffian multilinearity in the second auxiliary row gives
\[
 \frac{(1+z^2)u+z(1+z^{-2})u}{1+z}
 =(z+z^{-1})u=qu.
\]
Hence
\begin{equation*}
 \mathscr S_z\mathcal A_m(z;u)=\Phi_m(t;u),
 \qquad 2t=z+z^{-1}.
\end{equation*}

The proof can be summarized by the following commutative square.
\begin{equation}\label{eq:four-vertex}
\begin{tikzcd}[column sep=large,row sep=large]
\displaystyle
\sum_{\ell(\lambda)\le m}
z^{\Delta_m(\lambda)}\st_\lambda
\arrow[r,leftrightarrow,"\text{Pfaffian interpolation}"]
\arrow[d,"\mathscr S_z"']
&
\text{charge Pfaffian}
\arrow[d,"\mathscr S_z"]
\\
\displaystyle
\sum_{\ell(\lambda)\le m}
V_{\kappa(\lambda)}(t)\st_\lambda
\arrow[r,equal]
&
\displaystyle
\sum_{r=0}^m t^r\frac{W^S_{m,r}}{c(S^2\CC^m)}
=\sum_{r=0}^m t^r\ssm(\Sigma^S_{m,r}).
\end{tikzcd}
\end{equation}

\begin{proof}[Proof of \cref{thm:sym-finite}]
The left vertical arrow in \eqref{eq:four-vertex} is
\eqref{eq:charge-cheb}; the top horizontal arrow is
\cref{thm:charge-pf}; the right vertical arrow is the boundary computation above;
and the lower-right expression is \cref{prop:PR-pf}.  This proves
\eqref{eq:sym-finite}.
\end{proof}

\begin{remark}
The appearance of Chebyshev polynomials is therefore not an a posteriori pattern
recognition.  The modified Robbins recurrence naturally sees the two charge states
$z$ and $z^{-1}$; the geometric PR class is obtained by the involutive boundary
symmetrization $\mathscr S_z$, and this symmetrization converts the charge of the
$2$-core into the Chebyshev polynomial $V_k$.
\end{remark}

\section{Probability consequences and comparison of the two geometries}\label{sec:prob-consequences}

We return to the random partition $\Lambda$ of \cref{thm:intro-prob}.  The two
matrix representations lead to two rather different random observables.

\subsection{Skew: an event}
For $r\equiv\epsilon\pmod2$, \cref{thm:intro-skew} gives
\[
 \ssm(\Sigma^\wedge_{\infty,r})(x)
 =\sum_{d_\epsilon(\lambda)=r}
 \PP(\Lambda=\lambda)
 =\PP(d_\epsilon(\Lambda)=r).
\]
Thus the stable skew orbit decomposition is literally a partition of the sample
space by the Maya-dimer random variable $d_\epsilon$.

\subsection{Symmetric: a Chebyshev observable}
Let
\[
 K=\kappa(\Lambda),
 \qquad
 \core(\Lambda)=\delta_K.
\]
Then \cref{thm:intro-sym} gives
\begin{equation}\label{eq:prob-cheb-final}
 \sum_{r\ge0}t^r\ssm(\Sigma^S_{\infty,r})(x)
 =\EE[V_K(t)].
\end{equation}
Equivalently, if
\[
 \pi_k=\PP(\core(\Lambda)=\delta_k)
 =\sum_{\core(\lambda)=\delta_k}\st_\lambda(x),
\]
then
\[
 \sum_{r\ge0}t^r\ssm(\Sigma^S_{\infty,r})(x)
 =\sum_{k\ge0}\pi_kV_k(t).
\]
Hence the stable symmetric SSM classes are a universal Chebyshev transform of the
$2$-core distribution.
This comparison can be summarized as
\[
 \begin{array}{ccl}
 \Lambda^2\CC^n
 &:&
 \ssm_r=\PP\{\text{Maya-dimer defect}=r\},\\[1mm]
 S^2\CC^n
 &:&
 \displaystyle\sum_rt^r\ssm_r
 =\EE\bigl[V_{\text{$2$-core length}}(t)\bigr].
 \end{array}
\]

\section{Further directions}
\subsection{Probability theory}
The probability interpretation raises several natural problems.  It would be interesting to describe the distributions of the Maya-dimer defect and the $2$-core directly from the stochastic six-vertex process, without first summing over endpoint partitions.  In the symmetric case the two-runner charge $\Delta_m$ has a particularly simple endpoint interpretation: in beta-number coordinates it is the difference between the numbers of exits in the two parity classes.  It is therefore natural to ask for contour, determinantal, or asymptotic
formulas for its distribution and for the Chebyshev moments
\eqref{eq:prob-cheb-final}.

A concrete instance of the same question is the distribution of the largest part.
The lattice underlying \cref{thm:intro-prob} is exactly a rational six-vertex
model with partial domain-wall boundary conditions, for which determinant
formulas are known both for the partition function itself and for boundary
one-point functions \cite{FW,MP}.  In our normalization the rightmost top exit is
$i_m=\lambda_1+m$, so such boundary one-point functions should translate directly
into an explicit formula for
\[
 \PP(\lambda_1=k)
 =\sum_{\lambda:\,\lambda_1=k}\st_\lambda(x_1,\ldots,x_m),
\]
presumably simplifying to a terminating hypergeometric-type expression in the
homogeneous case $x_1=\cdots=x_m$.

\subsection{Positivity in geometry}
The six-vertex realization
makes the coefficientwise monomial positivity of the finite matrix-Schubert CSM
numerator, $P_I\in\ZZ_{\ge0}[x_1,\ldots,x_m]$ of \cref{cor:monomial-positive},
transparent.  This is similar to, but distinct from, the positivity results
for CSM classes of ordinary Schubert cells, the Aluffi-Mihalcea conjecture \cite{AM} proved in \cite{Huh}.  It would be interesting to interpret our (monomial) positivity statements by presenting effective (torus equivariant) cycle representatives, for instance via a log-resolution or stratification compatible with the matrix-Schubert geometry.

\subsection{Singularity theory}
The geometrically relevant generalizations of degenerations of linear maps (the objects of this paper) are in two directions: (i) {\em higher degree} maps, and (ii) {\em diagrams} of linear maps (quivers).

Initial results in the nonlinear setting (the theory of SSM Thom polynomials) exhibit notable sparsity and degreewise sign coherence in
their $\st$-expansions, e.g.,
\begin{multline*}
\text{ssmTp}_{\CC[t]/(t^3)}^{l=0}=
(\st_{11}+2\st_2)
-
(6\st_{21}+6\st_{3})
+
(5\st_{211}+5\st_{22}+26\st_{31}+14\st_4)
\\
-(12\st_{221}+38\st_{311}+34\st_{32}+82\st_{41}+30\st_5)
+
\ldots.
\end{multline*}
For similar formulas and some conjectures, see \cite[Fig.~4 and Conj.~7.2]{RimanyiThom} and the \cite{TPP}. The structure suggests the possibility of probability-theoretic interpretations of SSM Thom polynomials.

\bibliographystyle{alpha}
\bibliography{refs}

\end{document}